\newtheorem{thm}{Theorem}[section]
\newtheorem{prop}[thm]{Proposition}
\newtheorem{lem}[thm]{Lemma}
\theoremstyle{remark}
\newtheorem{rem}[thm]{Remark}
\newcommand{\FF}{\mathbb{F}}
\newcommand{\ZZ}{\mathbb{Z}}
\newcommand{\YY}{\mathcal{Y}}
\newcommand{\0}{\mathbf{0}}
\newcommand{\1}{\mathbf{1}}
\DeclareMathOperator{\wt}{wt}
\DeclareMathOperator{\Tdu}{Tdu}
\begin{document}
\title{Binary linear complementary dual codes}

\author{
Masaaki Harada\thanks{
Research Center for Pure and Applied Mathematics,
Graduate School of Information Sciences,
Tohoku University, Sendai 980--8579, Japan.
email: {\tt mharada@m.tohoku.ac.jp}.}
and 
Ken Saito\thanks{
Research Center for Pure and Applied Mathematics,
Graduate School of Information Sciences,
Tohoku University, Sendai 980--8579, Japan.
email: {\tt kensaito@ims.is.tohoku.ac.jp}.}
}
\date{}

\maketitle

\noindent
{\bf Dedicated to Professor Masahiko Miyamoto on His 65th Birthday}

\begin{abstract}
Linear complementary dual codes (or codes with complementary duals)
are codes whose intersections with their dual codes are trivial.
We study binary linear complementary dual $[n,k]$ codes with
the largest minimum weight
among  all binary linear complementary dual $[n,k]$ codes.
We characterize binary linear complementary dual codes 
with the largest minimum weight for small dimensions.
A complete classification of binary linear complementary dual
$[n,k]$ codes with the largest minimum weight 
 is also given for $1 \le k \le n \le 16$.
\end{abstract}

\section{Introduction}
An $[n,k]$ {\em code} $C$ over $\FF_q$ 
is a $k$-dimensional vector subspace of $\FF_q^n$,
where $\FF_q$ denotes the finite field of order $q$ and $q$ is a prime power.
A code over $\FF_2$ is called {\em binary}.
The parameters $n$ and $k$
are called the {\em length} and {\em dimension} of $C$, respectively.
The {\em weight} $\wt(x)$ of a vector $x \in \FF_q^n$ is
the number of non-zero components of $x$.
A vector of $C$ is called a {\em codeword} of $C$.
The minimum non-zero weight of all codewords in $C$ is called
the {\em minimum weight} $d(C)$ of $C$ and an $[n,k]$ code with minimum
weight $d$ is called an $[n,k,d]$ code.
Two $[n,k]$ codes $C$ and $C'$ over $\FF_q$ are 
{\em equivalent}, denoted $C \cong C'$,
if there is an $n \times n$ monomial matrix $P$ over $\FF_q$ with 
$C' = C \cdot P = \{ x P \mid x \in C\}$.  

The {\em dual} code $C^{\perp}$ of a code
$C$ of length $n$ is defined as
$
C^{\perp}=
\{x \in \FF_q^n \mid x \cdot y = 0 \text{ for all } y \in C\},
$
where $x \cdot y$ is the standard inner product.
A code $C$ is called {\em linear complementary dual}
(or a linear code with complementary dual)
if $C \cap C^\perp = \{\0_n\}$, where $\0_n$ denotes the zero vector of length $n$.
We say that such a code is LCD for short.

LCD codes were introduced by Massey~\cite{Massey} and gave an optimum linear
coding solution for the two user binary adder channel.
LCD codes are an important class of codes for both
theoretical and practical reasons (see~\cite{CG},
\cite{CMTQ},
\cite{CMTQ2},
\cite{DKOSS},
\cite{bound},
\cite{GOS},
\cite{Jin},
\cite{Massey},
\cite{MTQ},
\cite{S}).
It is a fundamental problem to classify LCD $[n,k]$ codes
and determine the largest minimum weight
among all LCD $[n,k]$ codes.
Recently, 
much work has been done concerning this fundamental problem
(see~\cite{CMTQ},
\cite{CMTQ2},
\cite{DKOSS},
\cite{bound},
\cite{Jin}).
In particular, we emphasize the recent work by 
Carlet, Mesnager, Tang, Qi and Pellikaan~\cite{CMTQ2}.
It has been shown in~\cite{CMTQ2} that
any code over $\FF_q$ is equivalent to some LCD code
for $q \ge 4$.
This motivates us to study binary LCD codes.

Throughout this paper,
let $d(n,k)$ denote the largest minimum weight among all binary
LCD $[n,k]$ codes.
Recently, some bounds on the minimum weights of binary LCD $[n,k]$ codes
have been established in~\cite{bound}.
More precisely, $d(n,2)$ has been determined and 
the values $d(n,k)$ have been calculated for $1 \le k \le n \le 12$.
In this paper, 
we characterize binary LCD $[n,k,d(n,k)]$ codes for small $k$.
The concept of $k$-covers of $m$-sets
plays an important role in the study of such codes.
Using the characterization, we give a classification of
binary LCD $[n,2,d(n,2)]$ codes and we determine $d(n,3)$.
In this paper, 
a complete classification of binary
LCD $[n,k]$ codes having the minimum weight $d(n,k)$
is also given for $1 \le k \le n \le 16$.

The paper is organized as follows.
In Section~\ref{Sec:2},
definitions, notations and basic results are given.
We also give a classification of binary LCD $[n,k,d(n,k)]$ codes 
for $k=1,n-1$.
In Section~\ref{Sec:cover},
we give some characterization of binary 
LCD codes using $k$-covers of $m$-sets.
This characterization is used in Sections~\ref{Sec:k2} and \ref{Sec:k3}.
In Section~\ref{Sec:k2}, we study binary LCD codes of dimension $2$.
We give a classification of binary LCD $[n,2,d(n,2)]$ codes 
for $n=6t$ $(t \ge 1)$, $6t+1$ $(t \ge 1)$,
$6t+2$ $(t \ge 0)$, $6t+3$ $(t \ge 1)$,
$6t+4$ $(t \ge 0)$ and $6t+5$ $(t \ge 0)$
(Theorems~\ref{thm:C1} and \ref{thm:C2}).
In Section~\ref{Sec:k3},
we study binary LCD codes of dimension $3$.
In Section~\ref{Sec:k3}, we show that
$d(n,3) =
\left\lfloor \frac{4n}{7}\right\rfloor$ if $n \equiv 3,5 \pmod {7}$ and
$\left\lfloor \frac{4n}{7}\right\rfloor-1$ otherwise,
for $n \ge 3$ (Theorem~\ref{thm:n3dmax}).
We also establish the uniqueness of binary
LCD $[n,3,d(n,3)]$ codes for 
$n \equiv 0, 2, 3,  5 \pmod {7}$.
Finally, in Section~\ref{Sec:C}, we give a complete classification of
binary LCD $[n,k]$ codes having the minimum weight $d(n,k)$
for $2 \le k \le n-1 \le 15$.

All computer calculations in this paper
were done with the help of {\sc Magma}~\cite{Magma}.

\section{Preliminaries}\label{Sec:2}
\subsection{Definitions, notations and basic results}


Throughout this paper,
$\0_{s}$ and $\1_{s}$ denote the zero vector and the all-one vector of length $s$, respectively.
Let $I_k$ denote the identity matrix of order $k$ and
let $A^T$ denote the transpose of a matrix $A$.

From now on, all codes mean binary.
Let $C$ be an $[n,k]$ code.  
The {\em weight enumerator} of $C$ is given by
$\sum_{i=0}^n A_i y^i$, where
$A_i$ is the number of codewords of weight $i$ in $C$.
It is trivial that two codes with distinct weight enumerators 
are inequivalent.
The {\em dual} code $C^{\perp}$ of $C$ is defined as
$
C^{\perp}=
\{x \in \FF_2^n \mid x \cdot y = 0 \text{ for all } y \in C\},
$
where $x \cdot y$ is the standard inner product.
A code $C$ is called {\em linear complementary dual}
(or a linear code with complementary dual)
if $C \cap C^\perp = \{\0_n\}$. 
We say that such a code is LCD for short.
A generator matrix of $C$
is a $k \times n$ matrix whose rows are basis vectors of $C$.
A parity-check matrix of $C$ is a generator matrix of $C^\perp$.
The following characterization is due to Massey~\cite{Massey}.

\begin{prop}\label{prop:Massey}
Let $C$ be a code.  
Let $G$ and $H$ be a generator matrix and a parity-check
matrix of $C$, respectively.
Then the following properties are equivalent:
\begin{itemize}
\item[\rm (i)] $C$ is LCD,
\item[\rm (ii)] $C^\perp$ is LCD,
\item[\rm (iii)] $G G^T$ is nonsingular,
\item[\rm (iv)] $H H^T$ is nonsingular.
\end{itemize}
\end{prop}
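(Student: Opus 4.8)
The plan is to establish the four equivalences through the three implications (i) $\Leftrightarrow$ (ii), (i) $\Leftrightarrow$ (iii), and (ii) $\Leftrightarrow$ (iv), where the last will simply mirror the second with the roles of $C$ and $C^\perp$ interchanged. The whole argument is linear algebra over $\FF_2$, so I do not expect any genuine difficulty, only some care in the bookkeeping.

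First I would dispose of (i) $\Leftrightarrow$ (ii). Since $(C^\perp)^\perp = C$ and intersection is symmetric, the condition defining (ii), namely $C^\perp \cap (C^\perp)^\perp = \{\0_n\}$, is literally $C^\perp \cap C = \{\0_n\}$, which is (i). No computation is needed here.

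The heart of the matter is (i) $\Leftrightarrow$ (iii), and the idea is to pull everything back to the coefficient space $\FF_2^k$ via $G$. Every $v \in C$ has the form $v = xG$ for a unique $x \in \FF_2^k$, the uniqueness holding because $G$ has full row rank $k$; equivalently, the map $x \mapsto xG$ is injective. Because the rows of $G$ span $C$, a vector $v$ lies in $C^\perp$ precisely when $v G^T = \0_k$. Hence $v = xG$ lies in $C \cap C^\perp$ if and only if $x (GG^T) = \0_k$. Using injectivity of $x \mapsto xG$, the equality $C \cap C^\perp = \{\0_n\}$ then holds if and only if $x(GG^T) = \0_k$ forces $x = \0_k$; that is, if and only if the (left) null space of the $k \times k$ matrix $GG^T$ is trivial, which for a square matrix is exactly the statement that $GG^T$ is nonsingular.

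Finally, (ii) $\Leftrightarrow$ (iv) is obtained by repeating the previous paragraph verbatim with $C$ replaced by $C^\perp$: since $H$ is by definition a generator matrix of $C^\perp$, the same argument gives that $C^\perp$ is LCD if and only if $HH^T$ is nonsingular. Combining the three parts shows that all four properties are equivalent. I expect the only delicate point to be the core step—specifically the use of the full row rank of $G$ to pass from $xG = \0_n$ to $x = \0_k$, together with the observation that a trivial null space is equivalent to nonsingularity precisely because $GG^T$ is square. Everything else is formal and works over any field, the characteristic of $\FF_2$ playing no special role.
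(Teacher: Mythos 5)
Your proof is correct, but note that the paper itself contains no proof of this proposition: it is stated as a known characterization and attributed to Massey's 1992 paper, so there is no internal argument to compare against. What you have written is the standard (essentially Massey's original) argument, and it fills the gap in a self-contained way: biduality $(C^\perp)^\perp = C$ gives (i) $\Leftrightarrow$ (ii); pulling back along the injection $x \mapsto xG$ gives (i) $\Leftrightarrow$ (iii); and applying that equivalence to $C^\perp$ --- whose generator matrix is $H$, by the paper's definition of a parity-check matrix --- gives (ii) $\Leftrightarrow$ (iv). The two points you flag as delicate are handled correctly: injectivity of $x \mapsto xG$ is legitimate because the paper defines a generator matrix to have basis vectors as rows (full row rank $k$), and the membership criterion $v \in C^\perp \Leftrightarrow vG^T = \0_k$ holds because the rows of $G$ span $C$. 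Your closing observation that nothing depends on the field $\FF_2$ is also accurate; the proposition is valid over any $\FF_q$, which is consistent with how Massey states it.
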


Throughout this paper,
the condition (iii) is used to verify that a given code 
is LCD.

Let $d(n,k)$ denote the largest minimum weight among all 
LCD $[n,k]$ codes throughout this paper.


\begin{lem}\label{lem:dual2}
Suppose that there is an LCD $[n,k,d]$ code $C$.
If $d(n-1,k) \le d-1$, then $d(C^\perp) \ge 2$.
\end{lem}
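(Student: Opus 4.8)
The plan is to argue by contradiction, assuming $d(C^\perp) \le 1$ and deriving a contradiction with the hypothesis $d(n-1,k) \le d-1$. The first step is to translate the condition $d(C^\perp) \ge 2$ into a statement about the coordinates of $C$. A weight-one vector lying in $C^\perp$ is exactly some unit vector $e_i$, and $e_i \in C^\perp$ holds if and only if every codeword of $C$ vanishes in coordinate $i$; equivalently, a generator matrix $G$ of $C$ has a zero column in position $i$. Thus $d(C^\perp) \ge 2$ is equivalent to $C$ having no identically-zero coordinate, and the whole lemma reduces to showing that such a zero coordinate cannot occur under the stated hypothesis.

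So, for contradiction, I would suppose that coordinate $i$ is identically zero on $C$, and let $C'$ be the code of length $n-1$ obtained by deleting that coordinate. Because the deleted coordinate carries no information, this puncturing is an isometry onto its image: no two codewords are identified, so $\dim C' = k$, and every nonzero weight is preserved, so $d(C') = d$. Hence $C'$ is an $[n-1,k,d]$ code.

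The key remaining point is that $C'$ is again LCD, which I would verify through the Gram-matrix criterion of Proposition~\ref{prop:Massey}(iii). Writing $G$ for a generator matrix of $C$ whose $i$-th column is zero, and $G'$ for the matrix obtained by deleting that column, a direct expansion of the matrix product gives $G'(G')^T = G G^T$, since the zero column contributes nothing to any entry $\sum_j G_{aj}G_{bj}$. As $C$ is LCD, $G G^T$ is nonsingular, hence so is $G'(G')^T$, and therefore $C'$ is LCD. This produces an LCD $[n-1,k,d]$ code, whence $d(n-1,k) \ge d$, contradicting the hypothesis $d(n-1,k) \le d-1$.

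I expect no serious obstacle here; the only point requiring care is the bookkeeping behind the identity $G'(G')^T = G G^T$, i.e. confirming that deleting a zero column leaves the Gram matrix unchanged, since this is precisely what couples the LCD property of $C$ to that of the punctured code $C'$. The conceptual crux is simply the observation that ``$d(C^\perp) \ge 2$'' is the same as ``$C$ has no zero coordinate,'' after which the extremality of $d(n-1,k)$ does all the work.
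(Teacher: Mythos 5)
Your proof is correct and takes essentially the same route as the paper: the paper's proof also observes that $d(C^\perp)=1$ forces a zero column in a generator matrix of $C$, deletes that column to obtain an LCD $[n-1,k,d]$ code, and thereby contradicts $d(n-1,k)\le d-1$. The details you supply (preservation of dimension and minimum weight under puncturing, and the Gram-matrix identity $G'(G')^T=GG^T$ combined with Proposition~\ref{prop:Massey}(iii)) are exactly what the paper leaves implicit.
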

\begin{proof}
Suppose that $d(C^\perp)=1$.
Then some column of a generator matrix of $C$ is $\0_k$.
By deleting the column, an LCD $[n-1,k,d]$ code is constructed.
\end{proof}

\begin{lem}\label{lem:2col}
Suppose that there is an LCD $[n,k,d]$ code $C$ with $d(C^\perp) \ge 2$.
If $n-k \ge 2^k$,
then there is an LCD $[n-2,k]$ code $D$ with $d(D^\perp) \ge 2$.
\end{lem}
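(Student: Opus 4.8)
The plan is to construct $D$ by deleting two suitably chosen columns from a generator matrix $G$ of $C$. Write $g_1,\dots,g_n\in\FF_2^k$ for the columns of $G$. Since $d(C^\perp)\ge 2$, the dual has no weight-one codeword, which is equivalent to $G$ having no zero column; hence every $g_i\neq\0_k$, and since $G$ has rank $k$ the $g_i$ span $\FF_2^k$. If $G'$ denotes the matrix obtained by deleting columns $i$ and $j$, then over $\FF_2$
\[
G'(G')^T = GG^T + g_ig_i^T + g_jg_j^T ,
\]
and $GG^T$ is nonsingular because $C$ is LCD (Proposition~\ref{prop:Massey}). The first key observation is that deleting two \emph{equal} columns, say $g_i=g_j=v$, gives $g_ig_i^T+g_jg_j^T=2\,vv^T=0$, so $G'(G')^T=GG^T$ remains nonsingular. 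As deletion cannot create a zero column, the code $D$ generated by $G'$ then automatically has $d(D^\perp)\ge 2$, and by Proposition~\ref{prop:Massey} it is LCD --- provided $G'$ still has full rank $k$.

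So the only thing to arrange is that the deletion does not drop the rank, and this is where the hypothesis $n-k\ge 2^k$ enters. There are only $2^k-1$ nonzero vectors in $\FF_2^k$, while $n\ge 2^k+k>2^k-1$, so by the pigeonhole principle some value is repeated and a pair of equal columns exists. Let $v_1,\dots,v_r$ be the distinct column values with multiplicities $m_1,\dots,m_r$, so $r\le 2^k-1$ and $\sum_l m_l=n$. If some $m_l\ge 3$, I would simply delete two copies of $v_l$: a copy of every value survives, so the column set still spans $\FF_2^k$ and $\operatorname{rank}G'=k$.

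The remaining case is when every multiplicity is at most $2$. Here I would count the number of doubled values, $t=\sum_l(m_l-1)=n-r\ge(2^k+k)-(2^k-1)=k+1$. On the other hand, call a value \emph{critical} if its removal from $\{v_1,\dots,v_r\}$ destroys the spanning property; the critical values are linearly independent (a minimal dependence among them would express one of them as a combination of the others, contradicting criticality), so there are at most $k$ of them. Since $t\ge k+1>k$, at least one doubled value is non-critical; deleting its two copies preserves both $GG^T$ and the spanning property, producing the desired $G'$ of rank $k$. The main obstacle is exactly this rank-preservation bookkeeping: guaranteeing that among the forced repetitions there is one whose removal is harmless to the dimension, which is precisely the content of the estimate $t\ge k+1$ against the bound of at most $k$ critical columns.
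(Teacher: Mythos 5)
Your proof is correct, and its engine is the same as the paper's: since $d(C^\perp)\ge 2$ every column of a generator matrix is nonzero, the pigeonhole principle among the $2^k-1$ nonzero vectors of $\FF_2^k$ yields two equal columns, and deleting an equal pair $g_i=g_j$ leaves the Gram matrix unchanged over $\FF_2$, so $G'(G')^T=GG^T$ stays nonsingular and Proposition~\ref{prop:Massey}~(iii) applies; likewise no zero column can appear, giving $d(D^\perp)\ge 2$. Where you genuinely diverge is in handling the rank of $G'$. The paper first puts $G$ in the standard form $\left(\begin{array}{cc} I_k & M \end{array}\right)$; the $n-k\ge 2^k$ nonzero columns of $M$ alone force an equal pair inside $M$, and deleting columns of $M$ never touches $I_k$, so full rank is immediate and the proof ends there. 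You instead work with an arbitrary generator matrix and repair the rank with a counting argument (a value of multiplicity $\ge 3$, or else at least $n-r\ge k+1$ doubled values against at most $k$ ``critical'' ones, which are linearly independent); this argument is valid, but it is superfluous even within your own setup: nonsingularity of $G'(G')^T$ already forces $\operatorname{rank} G'=k$, since any nonzero $x$ with $xG'=\0_{n-2}$ would give $x\,G'(G')^T=\0_k$, contradicting nonsingularity. So the entire second half of your proof can be deleted once this observation is made; what the paper's normal-form reduction buys is that the rank issue never arises at all, while what your version buys is independence from any choice of normal form, at the cost of bookkeeping that turns out to be avoidable.
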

\begin{proof}
We may assume without loss of generality that
$C$ has generator matrix of the form
$
G=
\left(
\begin{array}{cccc}
I_k & M 
\end{array}
\right),
$
where $M$ is a $k \times (n-k)$ matrix.
Since $d(C^\perp) \ge 2$, no column of $M$ is $\0_k$.
Since $n-k \ge 2^k$, some two columns of $M$ are identical.
Let $G'$ be the matrix obtained from $G$ by deleting the two
columns.
By Proposition~\ref{prop:Massey} (iii),
the code with generator matrix $G'$ is LCD.
\end{proof}


The above lemmas are used in 
Sections~\ref{Sec:cover}, \ref{Sec:k2} and \ref{Sec:k3}.

\subsection{LCD codes of dimensions $1,n-1$}

It is trivial that $\FF_2^n$ is an LCD $[n,n,1]$ code.
It is known~\cite{DKOSS} that 
\[
(d(n,1),d(n,n-1))=
\begin{cases}
(n,2)   & \text{ if } n \text{ is odd,} \\
(n-1,1) & \text{ if } n \text{ is even.} 
\end{cases}
\]

The following propositions are trivial,
so we omit the straightforward proofs.

\begin{prop}
There is a unique LCD $[n,1,d(n,1)]$ code, up to equivalence.
\end{prop}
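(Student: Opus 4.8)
The plan is to reduce everything to a single observation about the weight of the generating vector. Since a $1$-dimensional code is spanned by one nonzero vector $x \in \FF_2^n$, I would take $G=(x)$ as a generator matrix, so that $C = \{\0_n, x\}$ and hence $d(C) = \wt(x)$. First I would apply Proposition~\ref{prop:Massey}~(iii): here $GG^T$ is the $1\times 1$ matrix whose entry is $x\cdot x = \wt(x) \bmod 2$, so $GG^T$ is nonsingular precisely when $\wt(x)$ is odd. Thus the LCD $[n,1]$ codes are exactly the codes generated by an odd-weight vector, and maximizing the minimum weight amounts to choosing $x$ of the largest odd weight at most $n$.

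This immediately recovers the stated value of $d(n,1)$: the largest odd integer $\le n$ is $n$ when $n$ is odd and $n-1$ when $n$ is even, matching the known formula quoted just above. A code attaining $d(n,1)$ is therefore generated by some $x$ with $\wt(x) = d(n,1)$; for odd $n$ the unique such vector is $\1_n$, and for even $n$ there are $\binom{n}{n-1}$ choices.

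To finish with uniqueness up to equivalence, I would invoke the fact that over $\FF_2$ the only nonzero scalar is $1$, so every $n\times n$ monomial matrix is a permutation matrix. Consequently, if $x$ and $x'$ have the same weight, a coordinate permutation $P$ sends $x$ to $x'$, whence the codes $\{\0_n,x\}\cdot P = \{\0_n, x'\}$ are equal and in particular $C \cong C'$. Since weight is the only invariant distinguishing the generating vectors, all LCD $[n,1,d(n,1)]$ codes are equivalent.

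I do not anticipate a genuine obstacle here; the one point that must be stated carefully is why permuting coordinates suffices, namely the identification of binary monomial matrices with permutation matrices, after which the equivalence of any two fixed-weight generators is immediate.
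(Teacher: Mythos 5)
Your proof is correct; the paper omits this proof entirely (it labels the proposition ``trivial''), and your argument---Massey's criterion (iii) forcing the generator to have odd weight, hence $d(n,1)$ equal to the largest odd integer at most $n$, plus the observation that binary monomial matrices are permutation matrices so equal-weight generators give equivalent codes---is exactly the straightforward argument the paper intends.
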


\begin{prop}\label{prop:n-1}
\begin{itemize}
\item[\rm (i)]
Suppose that $n$ is odd.
Then there is a unique LCD $[n,n-1,2]$ code, up to equivalence.
\item[\rm (ii)]
Suppose that $n$ is even.
Then there are $n/2$ inequivalent LCD $[n,n-1,1]$ codes.
\end{itemize}
\end{prop}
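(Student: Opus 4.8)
The plan is to translate everything into the one-dimensional dual code. Since $C$ is an $[n,n-1]$ code, $C^\perp$ is spanned by a single nonzero vector $v$, and $v$ (viewed as a $1\times n$ matrix) is a parity-check matrix $H$ of $C$. By Proposition~\ref{prop:Massey}(iv), $C$ is LCD if and only if $HH^T = v \cdot v$ is nonsingular over $\FF_2$, that is, if and only if $\wt(v)$ is odd, since $v\cdot v \equiv \wt(v) \pmod 2$. Thus I would set up the correspondence $v \mapsto \langle v\rangle^\perp$ between nonzero odd-weight vectors and LCD $[n,n-1]$ codes, and then read off all the required data directly from $v$.

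Next I would record two elementary facts about the code $C = \langle v\rangle^\perp$. For the minimum weight: a weight-one vector lies in $C$ exactly when the corresponding coordinate of $v$ vanishes, so $d(C) = 1$ when $v$ has a zero coordinate, while $d(C)\ge 2$ (in which case $C$ is the even-weight code) precisely when $v = \1_n$. For equivalence: over $\FF_2$ every monomial matrix is a permutation matrix, and applying a permutation $P$ to $C$ applies $P$ to $C^\perp$ as well, so $C \cong C'$ if and only if $v' = vP$ for some permutation $P$, i.e.\ if and only if $\wt(v) = \wt(v')$. Hence the inequivalent LCD $[n,n-1]$ codes are indexed exactly by the attainable odd weights in $\{1,\dots,n\}$.

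With these in hand both parts follow by a short case split on the parity of $n$. If $n$ is odd, then $d(n,n-1)=2$ forces $v=\1_n$ (the only odd-weight vector with no zero coordinate), giving a single code up to equivalence, which proves (i). If $n$ is even, then $\1_n$ has even weight and is excluded, so every LCD $[n,n-1]$ code has $d(C)=1$, and the admissible odd weights are $1,3,\dots,n-1$; counting these yields exactly $n/2$ classes, proving (ii). I do not anticipate a genuine obstacle here, as the proposition is essentially bookkeeping, but the two points that most need care are verifying that equivalence of $C$ matches permutation-equivalence of the dual generator $v$ (so that classifying the codes reduces to the trivial classification of binary vectors up to weight), and getting the count right, namely that there are exactly $n/2$ odd integers in $\{1,\dots,n-1\}$ when $n$ is even.
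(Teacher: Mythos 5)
Your proof is correct and complete: the reduction to the one-dimensional dual generator $v$, the criterion that $C$ is LCD iff $\wt(v)$ is odd, the identification of equivalence classes with weights of $v$, and the final counts in both cases all check out. The paper itself omits the proof as ``trivial,'' and your bookkeeping argument is precisely the straightforward argument the authors had in mind, so there is nothing to reconcile.
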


\section{Constructions of LCD codes from $k$-covers}
\label{Sec:cover}
In this section, we study LCD codes constructed from $k$-covers of $m$-sets. 
We give a characterization of LCD codes of dimensions $2$ and $3$
using $k$-covers.

\subsection{LCD codes from $k$-covers}

Let $m$ and $k$ be positive integers.
Let $X$ be a set with $m$ elements (for short $m$-set).
A {\em $k$-cover} of $X$ is a collection of $k$ not necessarily distinct
subsets of $X$ whose union is $X$~\cite{Clarke}.
This concept plays an important role in the study of LCD codes 
for small dimensions.

We define a generator matrix from a $k$-cover $\{Y_1,Y_2,\ldots,Y_k\}$
of an $m$-set 
$X=\{1,2,\ldots,m\}$ as follow.
Since  the matrix depends on the 
ordering chosen for $Y_1,Y_2,\ldots,Y_k$,
in this paper, we fix the order.
More precisely, we define a $k$-cover as 
a sequence $\YY=(Y_1,Y_2,\ldots,Y_k)$.
Let $\YY=(Y_1,Y_2,\ldots,Y_k)$ be a $k$-cover of $X$.
We define the following subsets of $\{1,2,\ldots,k+\ell m\}$:
\[
\begin{array}{lcl}
Z_1&=&\{1\} \cup (k+Y_1)\cup (k+m+Y_1) \cup \cdots \cup (k+ (\ell-1) m+Y_1),\\
Z_2&=&\{2\} \cup (k+Y_2)\cup (k+m+Y_2) \cup \cdots \cup (k+ (\ell-1) m+Y_2),\\
& \vdots&\\
Z_k&=&\{k\} \cup (k+Y_k)\cup (k+m+Y_k) \cup \cdots \cup (k+ (\ell-1) m+Y_k),
\end{array}
\]
where $\ell$ is an even positive integer and
$a + Y_i =\{a+y \mid y \in Y_i\}$ for a positive integer $a$.
Let $S$ be a subset of $\{1,2,\ldots,s\}$.
We define the binary vector $x=(x_1,x_2,\ldots,x_s)$,
where $x_i=1$ if $i \in S$ and $x_i=0$ otherwise.
This vector $x$ is called the {\em characteristic vector} of $S$.
Let $z_i$ be the characteristic vector of $Z_i$ ($i=1,2,\ldots,k$).
Then define the $k \times (k+ \ell m)$ matrix $G(\YY)$ such that
$z_i$ is the $i$-th row.
We denote the code with generator matrix of the form $G(\YY)$
by $C(\YY)$.

\begin{prop}\label{prop:k-cover}
The code $C(\YY)$ is an LCD  $[\ell m +k,k]$ code with $d({C(\YY)}^\perp) = 2$.
\end{prop}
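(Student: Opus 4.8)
The plan is to expose the block structure of $G(\YY)$ and then reduce the whole statement to one short computation over $\FF_2$. Writing $B$ for the $k\times m$ matrix whose $i$-th row is the characteristic vector of $Y_i$, the definition of the sets $Z_i$ gives
\[
G(\YY)=\left(I_k \mid B \mid B \mid \cdots \mid B\right),
\]
with $B$ repeated $\ell$ times. The length $\ell m + k$ is then immediate, and since the leading $k$ columns form $I_k$ the rows are linearly independent, so $C(\YY)$ has dimension $k$.

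To verify that $C(\YY)$ is LCD I would invoke Proposition~\ref{prop:Massey}~(iii) and compute $G(\YY)G(\YY)^T$. Grouping the columns by blocks, the identity block contributes $I_k$ and each of the $\ell$ copies of $B$ contributes $BB^T$, so $G(\YY)G(\YY)^T = I_k + \ell\,BB^T$. The decisive point is that $\ell$ is even, whence $\ell\,BB^T$ vanishes modulo $2$ and $G(\YY)G(\YY)^T = I_k$ is nonsingular. This is the heart of the argument and the only place where the evenness of $\ell$ enters.

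It remains to determine the dual distance. First, $d(C(\YY)^\perp)\ge 2$ is equivalent to $G(\YY)$ having no zero column, since a weight-one dual codeword would force an identically zero coordinate in every codeword. The first $k$ columns are those of $I_k$, and a column inside a copy of $B$ indexed by $y\in X$ is zero only if $y$ belongs to no $Y_i$; but $\YY$ is a $k$-cover, so $Y_1\cup\cdots\cup Y_k=X$ and no such column vanishes. For the reverse inequality I would produce a weight-two dual codeword: because $\ell\ge 2$, the matrix $G(\YY)$ contains at least two identical copies of $B$, so corresponding columns of the first two copies coincide and the sum of the two associated unit vectors lies in $C(\YY)^\perp$. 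Hence $d(C(\YY)^\perp)=2$.

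I anticipate no genuine obstacle; the argument is purely structural. The one step needing care is the product $G(\YY)G(\YY)^T$, where one must correctly account for the $\ell$-fold repetition of $B$ and then use that $\ell$ is even to annihilate the term $\ell\,BB^T$. Once this is in hand, both the LCD property and the exact value of the dual distance follow directly.
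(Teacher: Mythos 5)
Your proposal is correct and takes essentially the same route as the paper: it verifies $G(\YY)G(\YY)^T=I_k$ using the evenness of $\ell$ together with Proposition~\ref{prop:Massey}~(iii), and then obtains $d(C(\YY)^\perp)=2$ from the absence of zero columns (the cover property) and the presence of repeated columns. You merely make explicit what the paper leaves terse, namely the block decomposition $\left(I_k \mid B \mid \cdots \mid B\right)$ and the weight-two dual codeword arising from two coinciding columns.
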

\begin{proof}
Since $\ell$ is even, $G(\YY)G(\YY)^T=I_k$.
Thus, $C(\YY)$ is LCD.
Since $\YY$ is a $k$-cover of $X$,
no column of $G(\YY)$ is $\0_k$ and some two columns of $G(\YY)$ are
identical.
This implies that $d(C(\YY)^\perp) = 2$.
\end{proof}

%

Now we consider the case $k=2,3$ and $\ell=2$.
Let $\YY$ be a $2$-cover and a $3$-cover of $X$, respectively.
Let $C(\YY)$ be a $[2m+2,2]$ code and a $[2m+3,3]$ code
with generator matrices of the form $G(\YY)$, respectively.
Let $C'(\YY)$ denote the $[2m+3,2]$ code and the $[2m+4,3]$ code 
with generator matrices of the following form:
\[
G'(\YY)=
\left(
\begin{array}{ccc}
 & G(\YY) & 
\end{array}
\begin{array}{cc}
1\\
1
\end{array}
\right) \text{ and }
\left(
\begin{array}{ccc}
 & G(\YY) & 
\end{array}
\begin{array}{cc}
0\\
1\\
1
\end{array}
\right),
\]
respectively.

\begin{prop}
The code $C'(\YY)$ is LCD.
\end{prop}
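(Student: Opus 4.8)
The plan is to verify LCD via Proposition~\ref{prop:Massey}(iii), that is, to show that the Gram matrix $G'(\YY)\,G'(\YY)^T$ is nonsingular over $\FF_2$. First I would recall from the proof of Proposition~\ref{prop:k-cover} that with $\ell=2$ (even) we already have $G(\YY)\,G(\YY)^T=I_k$ for $k=2,3$. The matrix $G'(\YY)$ is obtained from $G(\YY)$ by appending extra columns carrying the all-one patterns displayed in the statement, so the Gram matrix of the enlarged code is $G(\YY)\,G(\YY)^T$ plus the contribution of the appended columns.

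Next I would simply compute that correction term. For the dimension-$2$ case the appended block is the single column $(1,1)^T$, contributing $\begin{pmatrix}1&1\\1&1\end{pmatrix}$, so that
\[
G'(\YY)\,G'(\YY)^T=I_2+\begin{pmatrix}1&1\\1&1\end{pmatrix}=\begin{pmatrix}0&1\\1&0\end{pmatrix},
\]
which has determinant $1$ over $\FF_2$ and is therefore nonsingular. For the dimension-$3$ case the appended block is the single column $(0,1,1)^T$, contributing the rank-one matrix with a $2\times 2$ all-one block in the last two coordinates, so that
\[
G'(\YY)\,G'(\YY)^T=I_3+
\begin{pmatrix}0&0&0\\0&1&1\\0&1&1\end{pmatrix}
=\begin{pmatrix}1&0&0\\0&0&1\\0&1&0\end{pmatrix},
\]
whose determinant is again $1$ over $\FF_2$. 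By Proposition~\ref{prop:Massey}(iii) each $C'(\YY)$ is LCD.

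I expect no genuine obstacle here: the argument is a direct Gram-matrix computation, and the only thing to be careful about is bookkeeping, namely confirming that appending columns adds exactly the outer product of the appended column pattern to the original Gram matrix and that the parity of $\ell$ guarantees the identity $G(\YY)\,G(\YY)^T=I_k$ before the correction. The mild subtlety is that the two cases use different appended patterns ($(1,1)^T$ versus $(0,1,1)^T$), so I would present them separately rather than try to unify them, and in each case read off nonsingularity from the explicit small matrix over $\FF_2$.
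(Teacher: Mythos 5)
Your proposal is correct and follows essentially the same route as the paper: the paper's proof also verifies Proposition~\ref{prop:Massey}(iii) by exhibiting exactly the two Gram matrices you computed, $\left(\begin{smallmatrix}0&1\\1&0\end{smallmatrix}\right)$ for $k=2$ and $\left(\begin{smallmatrix}1&0&0\\0&0&1\\0&1&0\end{smallmatrix}\right)$ for $k=3$, and concludes nonsingularity. Your only addition is making explicit the bookkeeping that $G'(\YY)G'(\YY)^T=G(\YY)G(\YY)^T+vv^T=I_k+vv^T$ with $v$ the appended column, which the paper leaves implicit.
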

\begin{proof}
For $k=2$ and $3$, the result follows from
\[
G'(\YY)G'(\YY)^T=
\left(
\begin{array}{ccc}
0&1\\
1&0
\end{array}
\right) \text{ and }
\left(
\begin{array}{ccc}
1&0&0\\
0&0&1\\
0&1&0
\end{array}
\right),
\]
respectively.
\end{proof}

The above proposition is used in Propositions~\ref{prop:2cover2} and 
\ref{prop:3cover2}.

\subsection{LCD codes from $2$-covers}

In this subsection, we show that
LCD $[n,2]$ codes $C$ with $d(C^\perp) \ge 2$
are constructed from $2$-covers
for $n=2m+2, 2m+3$ $(m \ge 1)$.

\begin{prop}\label{prop:2cover}
Suppose that $m \ge 1$.
Let $C$ be an LCD $[2m+2,2]$ code with $d(C^\perp) \ge 2$.
Then there is a $2$-cover $(Y_1,Y_2)$ of an $m$-set $X$
such that $C \cong C((Y_1,Y_2))$.
\end{prop}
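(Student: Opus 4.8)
The plan is to bring a generator matrix of $C$ into the shape $(I_2 \mid A \mid A)$ that appears in the definition of $G(\YY)$ for $k=2$, $\ell=2$, and then to read off the cover from $A$. First I would record two structural facts. Since $C$ is an $[2m+2,2]$ code, a generator matrix $G$ is a $2\times(2m+2)$ matrix whose columns lie in $\FF_2^2$; because $d(C^\perp)\ge 2$, no column of $G$ equals $\0_2$, so every column is one of the three nonzero types $\binom{1}{0}$, $\binom{0}{1}$, $\binom{1}{1}$. After a column permutation (an equivalence) I may assume identical columns are grouped together, and I let $a,b,c$ denote the numbers of columns of these three types, so that $a+b+c=2m+2$.

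Next I would extract the LCD constraint from Proposition~\ref{prop:Massey}(iii). Reading weights off the column types, row $1$ of $G$ has weight $a+c$, row $2$ has weight $b+c$, and the two rows share a $1$ in exactly the $c$ positions of type $\binom{1}{1}$, so over $\FF_2$
\[
GG^T=\begin{pmatrix} a+c & c \\ c & b+c \end{pmatrix},\qquad \det GG^T \equiv (a+c)(b+c)+c \pmod 2 .
\]
Since the length $a+b+c$ is even, the number of odd entries among $a,b,c$ is $0$ or $2$, and a short case check shows $\det GG^T\equiv 1$ exactly when two of $a,b,c$ are odd (the all‑even case being singular). Hence the LCD hypothesis is equivalent to: precisely two of $a,b,c$ are odd.

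Finally I would normalize and build the cover. Row operations on $G$ leave the code $C$ unchanged and realize, through $GL_2(\FF_2)\cong S_3$, every permutation of the three nonzero column types; applying a suitable $U\in GL_2(\FF_2)$ I may assume the type with even count is $\binom{1}{1}$, so $a=2\alpha+1$, $b=2\beta+1$, $c=2\gamma$ with $\alpha+\beta+\gamma=m$. Comparing with $G(\YY)=(I_2\mid A\mid A)$, where $A$ is the $2\times m$ matrix having $\alpha$ columns $\binom{1}{0}$, $\beta$ columns $\binom{0}{1}$ and $\gamma$ columns $\binom{1}{1}$, one checks that the column multiset of $(I_2\mid A\mid A)$ consists of $1+2\alpha$, $1+2\beta$, $2\gamma$ columns of the three types, matching $(a,b,c)$ exactly. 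Taking $Y_1,Y_2$ to be the supports of the two rows of $A$ gives a $2$-cover of $X=\{1,\dots,m\}$, the cover condition holding because no column of $A$ is $\0_2$; and the multiset match yields a permutation matrix $P$ with $UGP=G(\YY)$, so that $C\cong C(\YY)$ as $U$ merely changes the basis. I expect the main work to be the parity analysis of $\det GG^T$ together with the observation that basis changes act as $S_3$ on column types: once the even‑count type is placed at $\binom{1}{1}$, the block $I_2$ supplies exactly the parity offsets needed to match the construction.
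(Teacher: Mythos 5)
Your proof is correct, but it takes a genuinely different route from the paper's. The paper argues by reduction: starting from a generator matrix $(I_2 \mid M)$ with no zero column, it repeatedly deletes pairs of identical columns (Lemma~\ref{lem:2col}) until it reaches an LCD $[4,2]$ code, classifies the few possible base cases by hand, identifies each as $C((Y_1,Y_2))$ for a $2$-cover of a $1$-set, and then implicitly reverses the deletions, each re-inserted pair of identical columns adding one element to the underlying set of the cover. You instead give a direct, closed-form analysis: count the three nonzero column types $(a,b,c)$, compute $\det(GG^T)\equiv(a+c)(b+c)+c\pmod 2$, characterize the LCD condition as ``exactly two of $a,b,c$ are odd'' (the even length ruling out the other parity patterns), and then use the $S_3$-action of $\mathrm{GL}_2(\FF_2)$ on column types to place the even count at $\binom{1}{1}$, after which the column multiset of $UG$ visibly coincides with that of $G(\YY)=(I_2\mid A\mid A)$ and a column permutation finishes the job. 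Your approach buys a self-contained argument with no induction, no enumeration of base codes, and an explicit numerical LCD criterion; in fact it anticipates the parameterization by $G^0(a,b,c)$ and $M(a,b,c)$ that the paper only introduces afterwards in Section~\ref{Sec:k2}. What the paper's reduction buys is uniformity: the identical deletion-to-base-case scheme is reused for dimension $3$ (Propositions~\ref{prop:3cover} and~\ref{prop:3cover2}), where the analogue of your determinant-parity analysis over the seven nonzero column types of $\FF_2^3$ would be considerably messier, and the base cases there are settled by computer search.
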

\begin{proof}
We may assume without loss of generality that 
$C$ has generator matrix of the following form:
\begin{equation}\label{eq:M2}
\left(
\begin{array}{cc}
1&0\\
0&1
\end{array}
\begin{array}{cccc}
 & M & 
\end{array}
\right),
\end{equation}
where $M$ is a $2 \times 2m$ matrix such that no column is $\0_2$.
If $2m \ge 4$, then 
some two columns of $M$ are identical.
Hence, 
an LCD $[2m,2]$ code is constructed
by Lemma~\ref{lem:2col}.
By continuing this process, 
an LCD $[4,2]$ code with generator
matrix of the form~\eqref{eq:M2} is constructed.
Hence, we show that such a code is constructed from a $2$-cover.

Since no column of $M$ is $\0_2$, it is sufficient to consider 
the $[4,2]$
codes with generator matrices~\eqref{eq:M2},
where
\[
M=
\left(
\begin{array}{cccc}
0&0\\
1&1
\end{array}
\right), 
\left(
\begin{array}{cccc}
0&1\\
1&0
\end{array}
\right), 
\left(
\begin{array}{cccc}
0&1\\
1&1
\end{array}
\right)
\text{ and }
\left(
\begin{array}{cccc}
1&1\\
1&1
\end{array}
\right).
\]
Only the first code and the last two codes are LCD.
It can be seen by hand that the last two LCD codes are
equivalent.
This means that the first code and the last code
are 
$C((Y_1,Y_2))$ and $C((Y'_1,Y'_2))$, respectively,
where $Y_1=\emptyset, Y_2=Y'_1=Y'_2=\{1\}$.
%
\end{proof}

\begin{prop}\label{prop:2cover2}
Suppose that $m \ge 1$.
Let $C$ be an LCD $[2m+3,2]$ code with $d(C^\perp) \ge 2$.
Then there is a $2$-cover $(Y_1,Y_2)$ of an $m$-set $X$
such that $C \cong C'((Y_1,Y_2))$.
\end{prop}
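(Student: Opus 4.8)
The plan is to mirror the structure of the proof of Proposition~\ref{prop:2cover} almost verbatim, adjusting only for the shift from length $2m+2$ to $2m+3$ and for the extra all-one tail column that defines $C'(\YY)$. First I would put a generator matrix of $C$ into a standard form. Since $C$ is a $[2m+3,2]$ code with $d(C^\perp)\ge 2$, I may assume without loss of generality that it has a generator matrix
\[
\left(
\begin{array}{cc}
1&0\\
0&1
\end{array}
\begin{array}{ccc}
 & N &
\end{array}
\right),
\]
where $N$ is a $2\times(2m+1)$ matrix no column of which is $\0_2$, the condition $d(C^\perp)\ge 2$ being exactly what forces no zero column (a zero column in a generator matrix means a weight-$1$ codeword in the dual).

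Next I would perform the same column-reduction as in Proposition~\ref{prop:2cover}, but arranged so that one distinguished column is protected. The target form $G'(\YY)$ has an odd number $2m+1$ of columns after the identity block, namely $2m$ columns coming from $G((Y_1,Y_2))$ together with one extra $\1_2$ column. The idea is to single out one column of $N$ equal to $\1_2$ (I would first argue such a column can be assumed present, or otherwise handle that case separately) to play the role of the final tail column, and then apply Lemma~\ref{lem:2col} repeatedly to the remaining $2m$ columns, deleting identical pairs two at a time. Because $\ell=2$ is even, each deletion of a matching pair preserves the LCD property via Proposition~\ref{prop:Massey}~(iii); this is the same mechanism used in the previous proposition, so the repeated reduction terminates at a small code whose non-identity part has few columns.

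At the base of the reduction I would examine the finitely many remaining cases by hand, exactly as the four explicit choices of $M$ were enumerated in the proof of Proposition~\ref{prop:2cover}. For each surviving configuration I would check the LCD condition through $G'(\YY)G'(\YY)^T$, invoking the already-proved fact that $C'(\YY)$ is LCD (the proposition immediately following $G'(\YY)$), and then identify the reduced matrix as $G'((Y_1,Y_2))$ for an explicit $2$-cover $(Y_1,Y_2)$ of an appropriate $m$-set, thereby producing the equivalence $C\cong C'((Y_1,Y_2))$.

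The main obstacle I expect is bookkeeping around the extra tail column and the parity of the count. In the even-length proposition every column outside the identity block could be freely paired off, but here one column must survive as the $\1_2$ tail, so I must ensure that after deleting matching pairs the leftover column is indeed $\1_2$ (up to the monomial equivalence allowed) rather than some other nonzero vector. Handling the case where no column of $N$ initially equals $\1_2$ — and showing a suitable monomial transformation or relabelling converts it to the canonical tail form of $C'(\YY)$ — is the delicate point; once that normalization is pinned down, the rest is the routine pairing argument together with a small by-hand verification of the base cases, just as in Proposition~\ref{prop:2cover}.
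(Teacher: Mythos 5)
Your global strategy --- normal form $(I_2 \mid N)$, repeated deletion of identical pairs of columns via Lemma~\ref{lem:2col}, then a by-hand check of the surviving small cases --- is the same as the paper's. But the step you yourself flag as delicate is a genuine gap, and it is a gap you created by over-organizing the reduction. You reserve a column of $N$ equal to $\1_2$ to serve as the tail of $G'(\YY)$, and then pair off the other $2m$ columns completely. This needs two facts you never establish: that $N$ contains a $\1_2$ column at all, and that, after one is set aside, each of the three nonzero column types $(1,1)^T$, $(1,0)^T$, $(0,1)^T$ occurs an even number of times among the remaining columns (otherwise the pairing terminates with leftovers, and the surviving column need not be $\1_2$). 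Writing $n_a,n_b,n_c$ for the multiplicities of the three types in $N$, these two facts together say exactly that $n_a$ is odd and $n_b,n_c$ are even --- which is precisely the parity condition separating LCD codes from non-LCD ones here, i.e.\ essentially the content of the proposition, not a harmless normalization. If you want to keep your arrangement you must prove it, e.g.\ by computing over $\FF_2$
\[
GG^T=\begin{pmatrix}1+n_a+n_b & n_a\\ n_a & 1+n_a+n_c\end{pmatrix},
\]
whose nonsingularity, combined with $n_a+n_b+n_c=2m+1$, forces $n_a$ odd and $n_b,n_c$ even. (In fact this computation makes the whole reduction unnecessary: the parity condition plus the absence of zero columns already puts the generator matrix, after permuting columns, in the form $(\,I_2 \mid A \mid A \mid \1_2\,)=G'(\YY)$.)

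The paper avoids the issue entirely by protecting nothing. It deletes identical pairs with no constraint on their type until $M'$ in \eqref{eq:M22} has only three columns, i.e.\ until it reaches an LCD $[5,2]$ code of the same form; it then lists all six possibilities for $M'$ up to column permutation, observes that exactly two of them give LCD codes, that those two are equivalent, and that the last one is literally $G'((Y_1,Y_2))$ with $Y_1=Y_2=\{1\}$. The $\1_2$ tail is thus identified only at the base case, through an equivalence, never during the reduction; re-inserting the deleted pairs (a pair of type $(1,1)^T$, $(1,0)^T$, $(0,1)^T$ enlarging $Y_1\cap Y_2$, $Y_1\setminus Y_2$, $Y_2\setminus Y_1$, respectively) visibly keeps the code of the form $C'(\YY)$. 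So either carry out the parity computation above, or simplify your plan to match the paper's: reduce blindly to length $5$ and enumerate.
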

\begin{proof}
We may assume without loss of generality that 
$C$ has generator matrix of the following form:
\begin{equation}\label{eq:M22}
\left(
\begin{array}{cc}
1&0\\
0&1
\end{array}
\begin{array}{cccc}
 & M' & 
\end{array}
\right),
\end{equation}
where $M'$ is a $2 \times (2m+1)$ matrix such that no column is $\0_2$.
If $2m+1 \ge 4$, then an LCD $[2m+1,2]$ code is constructed
by Lemma~\ref{lem:2col}.
By continuing this process, 
an LCD $[5,2]$ code with generator
matrix of the form~\eqref{eq:M22} is constructed.

Since no column of $M'$ is $\0_2$, it is sufficient to consider 
the $[5,2]$ codes with generator matrices~\eqref{eq:M22},
where
\begin{multline*}
M'=
\left(
\begin{array}{cccc}
0&0&0\\
1&1&1
\end{array}
\right),
\left(
\begin{array}{cccc}
0&0&1\\
1&1&0
\end{array}
\right),
\left(
\begin{array}{cccc}
0&0&1\\
1&1&1
\end{array}
\right),
\\
\left(
\begin{array}{cccc}
0&1&1\\
1&0&1
\end{array}
\right),
\left(
\begin{array}{cccc}
0&1&1\\
1&1&1
\end{array}
\right)
\text{ and }
\left(
\begin{array}{cccc}
1&1&1\\
1&1&1
\end{array}
\right).
\end{multline*}
Only the third code and the last code are LCD.
It can be seen by hand that the two LCD codes are
equivalent.
In addition, the last code is
$C'((Y_1,Y_2))$, where $Y_1=Y_2=\{1\}$.
This completes the proof.
\end{proof}

%

\subsection{LCD codes from $3$-covers}

In this subsection, we show that
LCD $[n,3]$ codes $C$ with $d(C^\perp) \ge 2$
are constructed from $3$-covers
for $n=2m+3, 2m+4$ $(m \ge 1)$.

\begin{prop} \label{prop:3cover}
Suppose that $m \ge 1$.
Let $C$ be an LCD $[2m+3,3]$ code with $d(C^\perp) \ge 2$.
Then there is a $3$-cover $(Y_1,Y_2,Y_3)$ of an $m$-set $X$
such that $C \cong C((Y_1,Y_2,Y_3))$.
\end{prop}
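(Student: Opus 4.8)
The plan is to avoid the reduction-and-base-case scheme used for Proposition~\ref{prop:2cover} and argue uniformly in $m$ by passing to the multiset of columns of a generator matrix. Fix a generator matrix $G$ of $C$; since a coordinate is identically zero on $C$ exactly when the corresponding column of $G$ is $\0_3$, the hypothesis $d(C^\perp)\ge 2$ says precisely that the columns of $G$ form a multiset $\mathcal{M}$ of $n=2m+3$ vectors lying in $\FF_2^3\setminus\{\0_3\}$. The device that drives everything is the observation that, over $\FF_2$,
\[
GG^T=\sum_{a\in\mathcal{M}}aa^T=\sum_{v\in O}vv^T,
\]
where $O\subseteq\FF_2^3\setminus\{\0_3\}$ is the set of vectors occurring an \emph{odd} number of times in $\mathcal{M}$ (vectors of even multiplicity drop out because $2\,vv^T=0$). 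By Proposition~\ref{prop:Massey}, $C$ is LCD if and only if this $3\times 3$ matrix is nonsingular.

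Next I would pin down $|O|$. Since the total number of columns satisfies $n\equiv|O|\pmod 2$ (only odd-multiplicity vectors affect the parity) and $n=2m+3$ is odd, $|O|$ is odd. Letting $B$ be the $3\times|O|$ matrix whose columns are the elements of $O$, we have $GG^T=BB^T$, so nonsingularity forces $\operatorname{rank}(B)=3$; hence $O$ spans $\FF_2^3$ and $|O|\ge 3$. The crux—and the step I expect to be the main, if brief, obstacle—is excluding $|O|=5,7$. For this I would use the identity $\sum_{v\in\FF_2^3\setminus\{\0_3\}}vv^T=0$ over $\FF_2$ (each diagonal entry counts the four nonzero vectors with a prescribed coordinate equal to $1$, each off-diagonal entry the two with two prescribed coordinates equal to $1$, both even). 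Then $|O|=7$ gives $GG^T=0$, while $|O|=5$, say $O=(\FF_2^3\setminus\{\0_3\})\setminus\{u,w\}$, gives $GG^T=uu^T+ww^T$ of rank at most $2$; both are singular, contradicting that $C$ is LCD. Therefore $|O|=3$, and a spanning set of three vectors is a basis, so $O$ is a basis of $\FF_2^3$.

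Finally I would recover the cover. Pick $A\in GL_3(\FF_2)$ carrying the basis $O$ to the standard basis $e_1,e_2,e_3$; replacing $G$ by $AG$ is a row operation, so the resulting code is still $C$, and after this change $e_1,e_2,e_3$ are exactly the vectors of odd multiplicity while every other nonzero vector has even multiplicity. Deleting one copy of each $e_i$ (to be arranged as the block $I_3$) leaves a multiset in which every vector occurs an even number of times; splitting it into two equal halves exhibits a generator matrix of a code equivalent to $C$ in the form $(I_3\mid N\mid N)$, where $N$ is $3\times m$ with no zero column. This is exactly $G(\YY)$ with $Y_i=\{\,j: N_{ij}=1\,\}$, and since $N$ has no zero column we have $\bigcup_i Y_i=X$, so $\YY=(Y_1,Y_2,Y_3)$ is a genuine $3$-cover of the $m$-set $X$ and $C\cong C(\YY)$. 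One could instead follow Proposition~\ref{prop:2cover}, stripping pairs of equal columns via Lemma~\ref{lem:2col} down to $m\le 3$, but settling those base cases needs the same outer-product computation, so the direct route seems cleaner.
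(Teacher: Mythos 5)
Your proof is correct, and it takes a genuinely different route from the paper's. The paper argues by reduction: Lemma~\ref{lem:2col} strips pairs of identical columns from a generator matrix $(I_3 \mid M)$ until one reaches an LCD $[n,3]$ code of the same shape with all columns of $M$ distinct, forcing $n \in \{5,7,9\}$; these base cases are then identified with codes $C(\YY)$ by computer search, and re-adjoining the stripped pairs (which only enlarges the cover) finishes the argument. You instead argue uniformly in $m$ via the multiset of columns: LCD-ness becomes nonsingularity of $\sum_{v \in O} vv^T$, where $O$ is the set of columns of odd multiplicity; parity of $n=2m+3$ makes $|O|$ odd, the rank inequality forces $O$ to span (excluding $|O|=1$), and the identity $\sum_{v \in \FF_2^3 \setminus \{\0_3\}} vv^T = 0$ over $\FF_2$ kills $|O| \in \{5,7\}$, so $O$ is a basis; a change of basis sending $O$ to $\{e_1,e_2,e_3\}$ followed by halving the even-multiplicity remainder then exhibits $C$ as $(I_3 \mid N \mid N)$ up to column permutation, with the cover condition $\bigcup_i Y_i = X$ coming precisely from $d(C^\perp) \ge 2$ (no zero columns). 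Every step checks out, including the two outer-product computations and the final splitting. What your route buys is a computer-free, self-contained proof that also explains structurally \emph{why} odd-length LCD codes of dimension $3$ with $d(C^\perp)\ge 2$ have this shape (the odd-multiplicity columns must form a basis); with more case analysis on even-size spanning sets $O$ it would also adapt to the even-length companion Proposition~\ref{prop:3cover2}, where the extra column accounts for $|O|=4$. What the paper's route buys is uniformity of method: the same strip-and-classify machinery, with machine-checked base cases, is reused across Propositions~\ref{prop:2cover}--\ref{prop:3cover2} and the classifications of Section~\ref{Sec:C}, at the cost of relying on computer search.
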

\begin{proof}
We may assume without loss of generality that 
$C$ has generator matrix of the following form:
\begin{equation}\label{eq:M}
\left(
\begin{array}{cccccccc}
1&0&0&&   & \\
0&1&0&& M & \\
0&0&1&&   &
\end{array}
\right),
\end{equation}
where $M$ is a $3 \times 2m$ matrix such that no column is $\0_3$.
If $2m \ge 8$, then an LCD $[2m+1,3]$ code is constructed
by Lemma~\ref{lem:2col}.
By continuing this process, 
an LCD $[n,3]$ code with generator
matrix of the form~\eqref{eq:M} 
is constructed, where $n=5,7,9$.
Hence, we show that such a code is constructed from a $3$-cover.

Let $C_9$ be an LCD $[9,3]$ code with generator
matrix of the form~\eqref{eq:M} satisfying that all columns of 
$M$ are distinct.
Our computer search shows that $C_9$ is equivalent to 
the code with generator matrix
\[
\left(
\begin{array}{ccccccccc}
1& 0& 0& 1& 1& 1& 1& 1& 1\\
0& 1& 0& 1& 0& 1& 1& 0& 1\\
0& 0& 1& 1& 1& 0& 1& 1& 0\\
\end{array}
\right).
\]
This means that the code is $C((Y_1,Y_2,Y_3))$, where
$Y_1=\{1,2,3\}$, $Y_2=\{1,3\}$ and $Y_3=\{1,2\}$.

Let $C_7$ be an LCD $[7,3]$ code with generator
matrix of the form~\eqref{eq:M} satisfying that all columns of 
$M$ are distinct.
Our computer search shows that $C_7$ is equivalent to one of 
the codes with generator matrices
\[
\left(
\begin{array}{ccccccccc}
1& 0& 0& 1& 1& 1& 1\\
0& 1& 0& 1& 1& 1& 1\\
0& 0& 1& 1& 0& 1& 0
\end{array}
\right)
\text{ and }
\left(
\begin{array}{ccccccccc}
1& 0& 0& 1& 1& 1& 1\\
0& 1& 0& 0& 1& 0& 1\\
0& 0& 1& 1& 0& 1& 0
\end{array}
\right).
\]
This means that the codes are 
$C((Y_1,Y_2,Y_3))$ and $C((Y'_1,Y'_2,Y'_3))$, respectively, where
$Y_1=Y_2=Y'_1=\{1,2\}$, $Y_3=Y'_3=\{1\}$ and $Y'_2=\{2\}$.

Our computer search shows that an LCD $[5,3]$ code 
is equivalent to one of 
the codes with generator matrices
\[
\left(
\begin{array}{ccccccccc}
1& 0& 0& 1& 1\\
0& 1& 0& 0& 0\\
0& 0& 1& 0& 0
\end{array}
\right), 
\left(
\begin{array}{ccccccccc}
1& 0& 0& 1& 1\\
0& 1& 0& 1& 1\\
0& 0& 1& 0& 0
\end{array}
\right)\text{ and }
\left(
\begin{array}{ccccccccc}
1& 0& 0& 1& 1\\
0& 1& 0& 1& 1\\
0& 0& 1& 1& 1
\end{array}
\right).
\]
This means that the codes are 
$C((Y_1,Y_2,Y_3))$, $C((Y'_1,Y'_2,Y'_3))$ 
and $C((Y''_1,Y''_2,Y''_3))$, respectively,
where
$Y_1=Y'_1=Y'_2=Y''_1=Y''_2=Y''_3=\{1\}$ and 
$Y_2=Y_3=Y'_3=\emptyset$.
%
\end{proof}


\begin{prop}\label{prop:3cover2}
Suppose that $m \ge 1$.
Let $C$ be an LCD $[2m+4,3]$ code with $d(C^\perp) \ge 2$.
Then there is a $3$-cover $(Y_1,Y_2,Y_3)$ of an $m$-set $X$
such that $C \cong C'((Y_1,Y_2,Y_3))$.
\end{prop}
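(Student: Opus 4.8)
The plan is to follow the proof of Proposition~\ref{prop:3cover} almost verbatim, peeling columns off in pairs via Lemma~\ref{lem:2col} and reducing to a few small base cases; the only new feature is the extra column $(0,1,1)^T$ that the $C'$-construction appends. First I would normalize $C$: up to equivalence its generator matrix is $(I_3 \mid M')$ with $M'$ a $3 \times (2m+1)$ matrix, and since $d(C^\perp) \ge 2$ no column of the generator matrix, hence no column of $M'$, is $\0_3$. As $\FF_2^3$ contains only $2^3-1=7$ nonzero vectors, for $m \ge 4$ the matrix $M'$ has at least $9$ columns and two must coincide; in general, whenever $M'$ has a repeated column, Lemma~\ref{lem:2col} deletes the two equal columns and yields an LCD $[2(m-1)+4,3]$ code $D$ with $d(D^\perp) \ge 2$.

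The crux is that this deletion can be reversed at the level of covers, which I would organize as an induction on $m$. Recall that for a $3$-cover $\YY$ of an $m$-set the matrix $G'(\YY)$ has the shape $(I_3 \mid N \mid N \mid (0,1,1)^T)$, where $N$ is $3 \times m$ and the doubling $N \mid N$ reflects $\ell = 2$. Assuming by induction that $D \cong C'(\YY')$ for a $3$-cover $\YY'$ of an $(m-1)$-set, fix a basis change $Q$ and a coordinate permutation $P$ realizing this equivalence, so $Q\,(I_3 \mid M'')\,P = (I_3 \mid N' \mid N' \mid (0,1,1)^T)$, where $M''$ is $M'$ with the two equal columns removed. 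Applying $Q$ and the permutation $P$ (extended by the identity on the two reinstated coordinates) to the generator matrix $(I_3 \mid M'' \mid v \mid v)$ of $C$, the two deleted copies of a nonzero $v \in \FF_2^3$ are carried to $Qv,Qv$, still equal and still nonzero; thus $C \cong (I_3 \mid N' \mid N' \mid (0,1,1)^T \mid Qv \mid Qv)$. A single column permutation regroups this as $(I_3 \mid N \mid N \mid (0,1,1)^T)$ with $N = (N' \mid Qv)$, and since $Qv \ne \0_3$ the new column records a point adjoined to $\YY'$; hence $N$ is the matrix of a genuine $3$-cover $\YY$ of an $m$-set and $C \cong C'(\YY)$.

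Only the irreducible cases, in which $M'$ has no repeated column, remain; this forces $2m+1 \le 7$, so just $m=1,2,3$ (lengths $6,8,10$) occur. For these I would run an exhaustive \textsc{Magma} search exactly as in Proposition~\ref{prop:3cover}: enumerate the LCD codes with generator matrix $(I_3 \mid M')$ and all columns of $M'$ distinct, and exhibit for each an explicit $3$-cover $\YY$ with $C \cong C'(\YY)$. For $m=1$ one must additionally dispose of the codes whose $3 \times 3$ block $M'$ has a repeated column directly, rather than reducing to $m=0$, since the statement only asserts $m \ge 1$.

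The step I expect to be delicate is the reversibility in the second paragraph: one must be sure the equivalence $D \cong C'(\YY')$ does not scramble the two freshly reinstated columns in a way that breaks the $N \mid N$ doubling. The saving observation is that over $\FF_2$ an equivalence is a basis change composed with a coordinate permutation, and both send an equal pair $v,v$ to an equal pair; the doubled structure, and with it the reading of the data as a $3$-cover, therefore survives intact. The base-case searches are routine but must be checked to be exhaustive.
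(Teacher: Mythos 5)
Your proposal is correct and takes essentially the same route as the paper: normalize the generator matrix to $(I_3 \mid M')$, strip off pairs of identical columns via Lemma~\ref{lem:2col}, and settle the irreducible base cases of lengths $6$, $8$ and $10$ by exhaustive computer search that exhibits explicit $3$-covers. The only difference is that you spell out, as an induction with the basis change $Q$ and extended permutation, the reversibility of the column deletion at the level of covers, a step the paper leaves implicit.
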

\begin{proof}
We may assume without loss of generality that 
$C$ has generator matrix of the following form:
\begin{equation}\label{eq:M3cover2}
\left(
\begin{array}{cccccccc}
1&0&0&&   & \\
0&1&0&& M' & \\
0&0&1&&   &
\end{array}
\right),
\end{equation}
where $M'$ is a $3 \times (2m+1)$ matrix such that no column is $\0_3$.
If $2m+1 \ge 8$, then an LCD $[2m+2,3]$ code is constructed
by Lemma~\ref{lem:2col}.
By continuing this process, 
an LCD $[n,3]$ code with generator
matrix of the form~\eqref{eq:M3cover2} 
is constructed, where $n=6,8,10$.

Let $C_{10}$ be an LCD $[10,3]$ code with generator
matrix of the form~\eqref{eq:M3cover2} satisfying that all columns of 
$M'$ are distinct.
Our computer search shows that $C_{10}$ is equivalent to the code
with generator matrix
\[
\left(
\begin{array}{cccccccccc}
1& 0& 0& 1& 1& 1& 1& 1& 1&0\\
0& 1& 0& 1& 0& 1& 1& 0& 1&1\\
0& 0& 1& 1& 1& 0& 1& 1& 0&1\\
\end{array}
\right).
\]
This means that the code is $C'((Y_1,Y_2,Y_3))$, where
$Y_1=\{1,2,3\}$, $Y_2=\{1,3\}$ and $Y_3=\{1,2\}$.

Let $C_8$ be an LCD $[8,3]$ code with generator
matrix of the form~\eqref{eq:M3cover2} satisfying that all columns of 
$M'$ are distinct.
Our computer search shows that $C_8$ is equivalent to 
the code with generator matrix
\[
\left(
\begin{array}{cccccccc}
1& 0& 0& 1& 1& 1& 1&0\\
0& 1& 0& 1& 1& 1& 1&1\\
0& 0& 1& 1& 0& 1& 0&1
\end{array}
\right).
\]
This means that the code is $C'((Y_1,Y_2,Y_3))$, where
$Y_1=Y_2=\{1,2\}$ and $Y_3=\{1\}$.

Our computer search shows that an LCD $[6,3]$ code
is equivalent to one of 
the codes with generator matrices
$
\left(
\begin{array}{cc}
I_3 & A
\end{array}
\right),
$
where
\[
A=
\left(
\begin{array}{ccccccccc}
0&0&0\\
1&1&1\\
1&1&1
\end{array}
\right), 
\left(
\begin{array}{ccccccccc}
1&1&0\\
0&0&1\\
0&0&1
\end{array}
\right) \text{ and }
\left(
\begin{array}{ccccccccc}
1&1&0\\
1&1&1\\
1&1&1
\end{array}
\right).
\]
This means that the codes are 
$C'((Y_1,Y_2,Y_3))$, 
$C'((Y'_1,Y'_2,Y'_3))$ and 
$C'((Y''_1,Y''_2,Y''_3))$, 
respectively,
where
$Y_1=Y'_2=Y'_3=\emptyset$ and
$Y_2=Y_3=Y'_1=Y''_1=Y''_2=Y''_3=\{1\}$.
%
\end{proof}

%

\subsection{Remarks}

The elements of an $m$-set $X$ may be taken to be identical.
In this case, $X$ is called {\em unlabelled}.
Let $\YY=(Y_1,Y_2,\ldots,Y_k)$ be a $k$-cover of $X$.
The order of the sets $Y_1,Y_2,\ldots,Y_k$ may not be material.
In this case, $\YY$ is called {\em disordered}~\cite{Clarke}.

\begin{prop}\label{prop:disordered}
Let $\YY$ be a $k$-cover of an $m$-set $X$.
Let $\YY'$ be the $k$-cover obtained from $\YY$ by
a permutation of $Y_1,Y_2,\ldots,Y_k$ and
a permutation of the elements of $X$.
Then $C(\YY)  \cong  C(\YY')$.
\end{prop}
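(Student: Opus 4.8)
The plan is to read everything off the explicit block shape of $G(\YY)$ and to show that each of the two permitted operations—reordering the sets and permuting the points of $X$—is realized by a permutation of the columns of $G(\YY)$, possibly after a change of basis. Since over $\FF_2$ a monomial matrix is just a permutation matrix, a column permutation is exactly an equivalence of codes, so this will give $C(\YY)\cong C(\YY')$. To set up, write $B$ for the $k\times m$ matrix whose $i$-th row is the characteristic vector of $Y_i$. The definition of the $Z_i$ then gives
\[
G(\YY)=\left(\begin{array}{ccccc} I_k & B & B & \cdots & B\end{array}\right),
\]
with $\ell$ copies of $B$: the term $\{i\}$ in $Z_i$ contributes the $i$-th column of $I_k$, and each translate $k+jm+Y_i$ contributes another copy of the row $\operatorname{char}(Y_i)$. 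All the work is extracting the two operations from this form.

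First I would handle a permutation $\tau\in S_m$ of the points, so that $Y_i'=\tau(Y_i)$. Then $\operatorname{char}(Y_i')$ is $\operatorname{char}(Y_i)$ with its $m$ coordinates permuted by $\tau$, so $G(\YY')$ is obtained from $G(\YY)$ by applying the single column permutation $\tau$ inside each of the $\ell$ blocks while fixing the first $k$ columns. This is a permutation of the $k+\ell m$ columns, hence $C(\YY')=C(\YY)\cdot P\cong C(\YY)$ for the corresponding permutation matrix $P$.

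Next I would handle a permutation $\sigma\in S_k$ of the sets, so that $Y_i'=Y_{\sigma(i)}$. Permuting the rows of $G(\YY)$ according to $\sigma$ is a change of basis and so leaves the code unchanged; the resulting matrix has block part with $i$-th row $\operatorname{char}(Y_{\sigma(i)})=\operatorname{char}(Y_i')$, already matching $G(\YY')$, but its first $k$ columns now form a permutation matrix of $\sigma$ rather than $I_k$. A further permutation of just those first $k$ columns restores $I_k$ and produces exactly $G(\YY')$. Thus $G(\YY')=Q\,G(\YY)\,R$ with $Q$ invertible and $R$ a permutation matrix, giving $C(\YY')\cong C(\YY)$ once more.

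Finally, a general $\YY'$ arises by composing a point permutation with a set permutation, and these act on disjoint sets of columns (the within-block coordinates versus the first $k$ columns and the rows), so the two permutation matrices commute and compose into a single permutation matrix; combined with the basis change coming from the set permutation, this yields $C(\YY)\cong C(\YY')$ in general. I do not expect a genuine obstacle: the entire content is the block description of $G(\YY)$, and the only point needing a little care is the bookkeeping observation that reordering the sets forces an accompanying permutation of the first $k$ columns, which over $\FF_2$ is still merely a column permutation and hence an equivalence.
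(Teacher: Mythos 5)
Your proof is correct and follows essentially the same route as the paper: both arguments read the two operations directly off the block form of $G(\YY)$, realizing a point permutation as a column permutation within each block and a set permutation as a row permutation (change of basis) plus a permutation of the first $k$ columns. In fact you are slightly more careful than the paper's three-line proof, which states only that a set permutation ``implies a permutation of rows'' and leaves implicit the compensating permutation of the identity block that you spell out.
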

\begin{proof}
Consider the generator matrix $G(\YY)$ of $C(\YY)$
constructed from a $k$-cover $\YY=(Y_1,Y_2,\ldots,Y_k)$.
A permutation of $Y_1,Y_2,\ldots,Y_k$ implies 
a permutation of rows of $G(\YY)$.
A permutation of the elements of $X$ implies 
a permutation of columns of $G(\YY)$.
The result follows.
\end{proof}

By the above proposition, 
when we consider codes $C(\YY)$ constructed from all $k$-covers $\YY$,
which must be checked to achieve a complete classification, 
it is sufficient to consider
only disordered $k$-covers of unlabelled $m$-sets.

Now let us consider LCD codes constructed from $4$-covers.
Our computer search shows that there are six
inequivalent LCD $[6,4]$ codes $D_{6,i}$ $(i=1,2,\ldots,6)$
with $d(D_{6,i}^\perp) \ge 2$.
These codes $D_{6,i}$ have generator matrices
$
\left(
\begin{array}{cc}
I_4 & A
\end{array}
\right),
$
where
\begin{multline*}
A=
\left(
\begin{array}{cc}
1& 1\\
0& 0\\
0& 0\\
0& 0
\end{array}
\right),
\left(
\begin{array}{cc}
1& 1\\
1& 1\\
0& 0\\
0& 0
\end{array}
\right),
\left(
\begin{array}{cc}
1& 1\\
1& 1\\
1& 1\\
0& 0
\end{array}
\right),
\\
\left(
\begin{array}{cc}
1& 1\\
1& 1\\
1& 1\\
1& 1
\end{array}
\right),
\left(
\begin{array}{cc}
1& 0\\
1& 0\\
0& 1\\
0& 1
\end{array}
\right)
\text{ and }
\left(
\begin{array}{cc}
1& 0\\
1& 0\\
0& 1\\
1& 1
\end{array}
\right),
\end{multline*}
respectively.
The weight enumerators $W_{6,i}$ of the codes $D_{6,i}$ are
listed in Table~\ref{Tab:W4}.
It is easy to see that the number of 
disordered $4$-covers of an unlabelled $1$-set
is $4$~\cite[Table~1]{Clarke}.
Only the codes $D_{6,i}$ $(i=1,2,3,4)$ are constructed from $4$-covers.

\begin{table}[thb]
\caption{$W_{6,i}$ $(i=1,2,\ldots,6)$}
\label{Tab:W4}
\begin{center}
{\footnotesize
\begin{tabular}{c|l|c|l}
\noalign{\hrule height0.8pt}
$i$ & \multicolumn{1}{c|}{$W_{6,i}$}&$i$ & \multicolumn{1}{c}{$W_{6,i}$}\\
\hline
1& $1+ 3y + 3y^2 + 2y^3 + 3y^4 + 3y^5+ y^6$&
4& $1+6y^2 + 4y^3 + y^4 + 4y^5$ \\
2& $1+ 2y + 2y^2 + 4y^3 + 5y^4 + 2y^5$ &
5& $1+6y^2 + 9y^4$ \\
3& $1+ y + 3y^2 + 6y^3 + 3y^4 + y^5 + y^6$ &
6& $1+4y^2 +6y^3 +3y^4 +2y^5$ \\
\noalign{\hrule height0.8pt}
\end{tabular}
}
\end{center}
\end{table}

%
%

\section{LCD codes of dimension 2}
\label{Sec:k2}

It was shown in~\cite{bound} that
\[
d(n,2) =
\begin{cases}
\lfloor \frac{2n}{3} \rfloor  & \text{ if } n \equiv 1,2,3,4 \pmod 6,\\
\lfloor \frac{2n}{3} \rfloor-1& \text{ otherwise},
\end{cases}
\]
for $n \ge 2$.
Throughout this section, we denote $d(n,2)$ by $d_n$.
In this section,
we give a classification of LCD $[n,2,d_n]$ codes 
for $n=6t$ $(t \ge 1)$, $6t+1$ $(t \ge 1)$,
$6t+2$ $(t \ge 0)$, $6t+3$ $(t \ge 1)$,
$6t+4$ $(t \ge 0)$ and $6t+5$ $(t \ge 0)$.
In Section~\ref{Sec:cover}, we gave some observation of LCD codes of
dimension $2$, which is established from the concept of 
$2$-covers of $m$-sets.
The observation is useful to complete the classification.

\begin{lem}\label{lem:2-0}
Suppose that $n \ge 2$ and $n \equiv 0,1,2,3 \pmod{6}$.
If there is an LCD $[n,2,d_n]$ code $C$, then $d(C^\perp) \ge 2$.
\end{lem}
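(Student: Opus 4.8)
The plan is to reduce the claim to a purely numerical inequality and then invoke Lemma~\ref{lem:dual2}. Taking $k=2$ and $d=d_n$ in that lemma, it suffices to verify the hypothesis $d(n-1,2)\le d_n-1$; once this holds, Lemma~\ref{lem:dual2} immediately gives $d(C^\perp)\ge 2$ for any LCD $[n,2,d_n]$ code $C$. Thus the entire content of the statement is that the largest minimum weight strictly increases as the length grows from $n-1$ to $n$, for each residue class $n\equiv 0,1,2,3\pmod 6$.

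First I would substitute the closed formula for $d(n,2)$ recalled at the beginning of this section. Writing $n=6t,\,6t+1,\,6t+2,\,6t+3$, I evaluate $d_n$ using the branch $\lfloor 2n/3\rfloor-1$ when $n\equiv 0$ and the branch $\lfloor 2n/3\rfloor$ when $n\equiv 1,2,3$, obtaining $d_n=4t-1,\,4t,\,4t+1,\,4t+2$ respectively. Next I would compute $d_{n-1}$, keeping in mind that decreasing the length by one shifts the residue, so that $n\equiv 0,1,2,3$ correspond to $n-1\equiv 5,0,1,2$ and the subtracted-one branch now applies exactly when $n\equiv 0,1$. A direct floor evaluation yields $d_{n-1}=4t-2,\,4t-1,\,4t,\,4t+1$, i.e.\ $d_{n-1}=d_n-1$ in every case. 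Hence $d(n-1,2)\le d_n-1$ holds (with equality), and Lemma~\ref{lem:dual2} applies for all $n\ge 3$ in these classes.

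The one value requiring separate treatment is $n=2$ (lying in the class $n\equiv 2$), where Lemma~\ref{lem:dual2} cannot be used because no $[1,2]$ code exists and the column-deletion argument degenerates. Here $d_2=1$ and the unique $[2,2]$ code is $\FF_2^2$, whose dual is $\{\0_2\}$; since $C^\perp$ contains no nonzero codeword, $d(C^\perp)\ge 2$ holds trivially.

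I expect the only genuine obstacle to be the bookkeeping with the floor function together with the residue shift from $n$ to $n-1$: one must track which branch of the formula governs $d_n$ and which governs $d_{n-1}$, since they are generally different. Once this is organized case by case, the argument needs no structural information about the codes beyond Lemma~\ref{lem:dual2}, and each step collapses to a single floor computation.
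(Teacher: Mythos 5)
Your proposal is correct and follows essentially the same route as the paper: the paper's proof likewise tabulates $d_n$ for $n=6t+s$ and invokes Lemma~\ref{lem:dual2}, leaving the verification $d(n-1,2)\le d_n-1$ (which you carry out explicitly, tracking the branch shift of the floor formula) to the reader. Your separate treatment of the degenerate case $n=2$, where $C^\perp$ is the zero code and Lemma~\ref{lem:dual2} cannot be applied, is a small point of care that the paper's proof glosses over.
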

\begin{proof}
Write $n=6t+s$, where $0 \le s \le 5$.
For $s$ and $d_n$, we have the following:
\begin{center}
\begin{tabular}{cc|cc|cc}
$s$ & $d_n$& $s$ & $d_n$& $s$ & $d_n$\\
\hline
0& $4t-1$ &2 & $4t+1$ &4 &$4t+2$\\
1& $4t$    &3 & $4t+2$  &5 &$4t+2$
\end{tabular}
\end{center}
The result follows by Lemma~\ref{lem:dual2}.
\end{proof}

Now suppose that $C$ and $C'$ are an LCD $[2m+2,2]$ code 
with $d(C^\perp) \ge 2$
and an LCD $[2m+3,2]$ code
with $d(C'^\perp) \ge 2$,
respectively, for $m \ge 1$.
By Propositions~\ref{prop:2cover} and~\ref{prop:2cover2},
we may assume without loss of generality that
$C$ and $C'$ have generator matrices of the following form:
\begin{align*}
G^0(a,b,c)=&
\left(
\begin{array}{cc}
1&0\\
0&1
\end{array}
\begin{array}{cccc}
 M(a,b,c)& M(a,b,c)
\end{array}
\right) \text{ and }
\\
G^1(a,b,c)=&
\left(
\begin{array}{cc}
1&0\\
0&1
\end{array}
\begin{array}{cccc}
 M(a,b,c)& M(a,b,c)
\end{array}
\begin{array}{cc}
1\\
1
\end{array}
\right),
\end{align*}
respectively,
where
\begin{equation}\label{eq:Mabc}
M(a,b,c)
=
\left(
\begin{array}{ccccccc}
\1_{a} & \1_{b} & \0_{c} \\
\1_{a} & \0_{b} & \1_{c} \\
\end{array}
\right).
\end{equation}
We denote the codes with generator matrices 
$G^0(a,b,c)$ and  $G^1(a,b,c)$
by
$C^0(a,b,c)$ and $C^1(a,b,c)$, respectively.
Then the codes $C^\delta(a,b,c)$ have the following weight enumerators for $\delta \in \{0,1\}$:
\begin{align}\label{eq:2we}
1
+y^{1+2(a+b)+\delta}+y^{1+2(a+c)+\delta}
+y^{2+2(b+c)}.
\end{align}

For nonnegative integers $a,b,c,n$ and $\delta \in \{0,1\}$, we consider the following
conditions:
\begin{align}
&d_n \le 1+2(a+b)+\delta,   \label{eq:ab}\\
&d_n \le 1+2(a+c)+\delta,   \label{eq:ac}\\
&d_n \le 2+2(b+c),   \label{eq:bc}\\
&2(a+b+c)+2 +\delta =n,     \label{eq:abc} \\
&b \le c. \label{eq:blec}
\end{align}
We note that the conditions \eqref{eq:ab}--\eqref{eq:bc}
are related to the minimum
weight of $C^\delta(a,b,c)$.

\begin{lem}\label{lem:abc}
\begin{itemize}
\item[\rm (i)]
Let $S$ be the set of $(a,b,c)$ satisfying the 
conditions~\eqref{eq:ab}--\eqref{eq:blec}, where $\delta=1$.
\begin{itemize}
\item[\rm (1)]
If $n=6t+1$ $(t \ge 1)$, then $S=\{(t-1,t,t),(t,t-1,t)\}$.
\item[\rm (2)]
If $n=6t+3$ $(t \ge 1)$, then $S=\{(t,t,t)\}$.
\item[\rm (3)]
If $n=6t+5$ $(t \ge 1)$, then
\[
S=
\left\{
\begin{array}{l}
(t-1,t+1,t+1),
(t,t,t+1),\\
(t+1,t-1,t+1),
(t+1,t,t)
\end{array}
\right\}.
\]
\end{itemize}
\item[\rm (ii)]
Let $S$ be the set of $(a,b,c)$ satisfying the 
conditions~\eqref{eq:ab}--\eqref{eq:blec}, where $\delta=0$.
\begin{itemize}
\item[\rm (1)]
If $n=6t$ $(t \ge 1)$, then $S=\{(t-1,t,t),(t,t-1,t)\}$.
\item[\rm (2)]
If $n=6t+2$ $(t \ge 1)$, then $S=\{(t,t,t)\}$.
\item[\rm (3)]
If $n=6t+4$ $(t \ge 0)$, then $S=\{(t+1,t,t)\}$.
\end{itemize}
\end{itemize}
\end{lem}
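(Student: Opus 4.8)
The plan is to read \eqref{eq:ab}--\eqref{eq:blec} as the problem of counting bounded nonnegative integer solutions of a single fixed-sum equation. The key observation is that \eqref{eq:abc} pins down the total exactly: setting $\sigma=(n-2-\delta)/2$, every admissible triple satisfies $a+b+c=\sigma$ (the required parity $n\equiv\delta\pmod 2$ holds automatically in each listed case). First I would rewrite the three minimum-weight conditions as upper bounds on the individual coordinates. Since $a+b=\sigma-c$, $a+c=\sigma-b$ and $b+c=\sigma-a$, condition \eqref{eq:ab} becomes $c\le\sigma-L_1$, condition \eqref{eq:ac} becomes $b\le\sigma-L_1$, and condition \eqref{eq:bc} becomes $a\le\sigma-L_2$, where $L_1=\lceil(d_n-1-\delta)/2\rceil$ and $L_2=\lceil(d_n-2)/2\rceil$ (the ceilings appear because the pairwise sums are integers). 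Thus $S$ is exactly the set of nonnegative integer triples with $a+b+c=\sigma$, $a\le U_a:=\sigma-L_2$, $b,c\le U_b=U_c:=\sigma-L_1$, and $b\le c$.

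Next I would substitute the explicit value of $d_n$ for each residue of $n$ modulo $6$, using the stated formula ($d_n=\lfloor 2n/3\rfloor$ for $n\equiv1,2,3,4$ and $\lfloor 2n/3\rfloor-1$ otherwise), and compute $\sigma$, $L_1$, $L_2$, and hence the caps $U_a,U_b,U_c$ in each of the six cases. For instance, in case (i)(1) one finds $d_n=4t$, $\sigma=3t-1$ and $U_a=U_b=U_c=t$; in case (ii)(3) one finds $d_n=4t+2$, $\sigma=3t+1$, $U_a=t+1$ and $U_b=U_c=t$. Once these numbers are in hand, the feasible set is governed entirely by the surplus $U_a+U_b+U_c-\sigma$, which I would check is always nonnegative.

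The cases then split by the size of this surplus. When the surplus is $0$ the caps are saturated and a single triple survives: this occurs in (i)(2) and (ii)(2), forcing $a=b=c=t$, and in (ii)(3), forcing $(a,b,c)=(t+1,t,t)$. When the surplus equals $1$, exactly one coordinate drops one below its cap; this happens in (i)(1) and (ii)(1), where all caps equal $t$, yielding the two triples $(t-1,t,t)$ and $(t,t-1,t)$ after imposing \eqref{eq:blec}. The genuinely multi-parameter case is (i)(3), where $n=6t+5$ gives $\sigma=3t+1$ with all three caps equal to $t+1$, so the surplus is $2$ and must be distributed among the three deficits; writing $a'=t+1-a$, $b'=t+1-b$, $c'=t+1-c$, I would enumerate the nonnegative solutions of $a'+b'+c'=2$ and translate $b\le c$ into $b'\ge c'$, which leaves precisely the four listed triples.

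I expect the main obstacle to be purely bookkeeping rather than conceptual: both the value of $d_n$ and the ceilings defining $L_1,L_2$ depend jointly on the residue of $n$ modulo $6$ and on the parity forced by $\delta$, so the caps $U_a,U_b,U_c$ must be recomputed separately and carefully in each case, and it is easy to misread a surplus of $0$ as a surplus of $1$. A secondary point to verify is nonnegativity of the coordinates, which is exactly what fixes the stated ranges of $t$: the entry $t-1$ appearing in (i)(1) and (ii)(1) and the entries $t-1$ in (i)(3) require $t\ge1$, whereas $(t+1,t,t)$ in (ii)(3) remains valid down to $t=0$.
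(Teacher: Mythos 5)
Your proposal is correct and follows essentially the same route as the paper's proof: both use the fixed-sum condition~\eqref{eq:abc} to turn the weight conditions~\eqref{eq:ab}--\eqref{eq:bc} into bounds on the individual coordinates and then enumerate the few surviving triples. The only difference is organizational: the paper bounds $a$ and case-splits on its value, detailing only $n=6t+1$ and declaring the remaining cases similar, whereas your caps-plus-surplus bookkeeping (deficits $a',b',c'$ summing to $U_a+U_b+U_c-\sigma$, with \eqref{eq:blec} becoming $b'\ge c'$) treats all six cases by one uniform mechanical scheme --- a clean way of making the paper's ``all cases are similar'' literally true.
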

\begin{proof}
All cases are similar, and we only give the details
for $n=6t+1$.

From~\eqref{eq:bc} and \eqref{eq:abc}, we have
$
a \le t.
$
From~\eqref{eq:ab}, \eqref{eq:ac} and \eqref{eq:abc}, we have
$
t-1 \le a.
$
Thus, we have
\[
a \in \{t-1,t\}.
\]

Suppose that $a=t-1$.
From~\eqref{eq:ab}, we have
$t \le b$. 
From~\eqref{eq:ac}, we have
$t \le c$. 
From~\eqref{eq:abc}, we have
$b+c=2t$.
Hence, we have
$b=c=t$. 

Suppose that $a=t$.
From~\eqref{eq:ab}, we have
$t-1 \le b$. 
From~\eqref{eq:ac}, we have
$t-1 \le c$. 
From~\eqref{eq:abc}, we have
$b+c=2t-1$.
From~\eqref{eq:blec}, we have
$(b,c)=(t-1,t)$.
\end{proof}

\begin{lem}\label{lem:bc}
$C^{\delta}(a,b,c)  \cong C^{\delta}(a,c,b)$ for $\delta \in \{0,1\}$.
\end{lem}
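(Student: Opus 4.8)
The plan is to exhibit an explicit monomial transformation (a permutation of the coordinates) that carries $C^{\delta}(a,b,c)$ onto $C^{\delta}(a,c,b)$, and then invoke the definition of equivalence. The key observation is that passing from $(a,b,c)$ to $(a,c,b)$ only swaps the roles of the blocks $\1_b,\0_c$ and $\0_b,\1_c$ inside $M(a,b,c)$, together with an interchange of the two rows. First I would write $M(a,c,b)$ explicitly from~\eqref{eq:Mabc} and compare it with $M(a,b,c)$: the column multiset $\{(\1_a\mid\1_a),(\1_b\mid\0_b),(\0_c\mid\1_c)\}$ defining $M(a,b,c)$ becomes $\{(\1_a\mid\1_a),(\0_b\mid\1_b)\text{-type},(\1_c\mid\0_c)\text{-type}\}$ for $M(a,c,b)$, so that after swapping the two rows the $b$- and $c$-blocks exchange their positions. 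This makes clear that row-swapping plus a suitable column permutation is exactly the transformation we need.

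Next I would carry out the construction at the level of the full generator matrices $G^{\delta}(a,b,c)$. Swapping the two rows of $G^{\delta}(a,b,c)$ is a basis change, hence does not alter the code. After the row swap, the top-left $2\times 2$ identity block $I_2$ is merely permuted to itself, the $\1_a$-blocks are fixed, but the $\1_b$-rows and $\0_c$-rows (appearing twice, once in each copy of $M(a,b,c)$) have turned into the pattern that defines $M(a,c,b)$ up to reordering the $b$-columns with the $c$-columns. I would then describe the column permutation $P$ that restores the prescribed block order $(\1_a,\1_b,\0_c)$ of~\eqref{eq:Mabc}: within each of the two copies of $M$, send the $c$ columns that now sit in the ``$b$-slot'' back to the ``$c$-slot'' and vice versa, fixing the $a$-columns and (for $\delta=1$) the final all-one column. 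Since column permutation is a monomial transformation, the resulting code is equivalent to the one we started from, and by inspection it has generator matrix $G^{\delta}(a,c,b)$. The argument is uniform in $\delta$ because the appended columns ($\binom{1}{1}$ when $\delta=1$) are symmetric under the row swap and are left fixed by $P$.

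The main obstacle, though a mild one, is purely bookkeeping: I must be careful that the column permutation acts consistently across \emph{both} copies of $M$ simultaneously and that it commutes correctly with the effect of the row swap on the leading $I_2$, so that the composite (row operation)$\circ$(column permutation) lands precisely on $G^{\delta}(a,c,b)$ rather than on some coordinate-permuted variant of it. Because a row swap is an invertible linear operation preserving the code and a column permutation is monomial, no subtle obstruction arises; the only risk is an indexing error in specifying $P$. I would present the proof tersely, perhaps just stating that swapping the two rows and then permuting columns so as to interchange the $b$- and $c$-blocks sends $G^{\delta}(a,b,c)$ to $G^{\delta}(a,c,b)$, and remark that the construction is identical for $\delta=0$ and $\delta=1$ since the extra coordinate in the $\delta=1$ case is fixed throughout.
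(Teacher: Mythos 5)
Your proposal is correct and takes essentially the same route as the paper, whose entire proof is the observation that $G^{\delta}(a,c,b)$ is obtained from $G^{\delta}(a,b,c)$ by permutations of rows and columns. The only bookkeeping detail to make explicit is that your column permutation $P$ must also swap the first two coordinates so as to restore the leading $I_2$ after the row swap, a point you already acknowledge.
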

\begin{proof}
The matrix $G^{\delta}(a,c,b)$ is obtained from $G^{\delta}(a,c,b)$ by
permutations of rows and columns.
\end{proof}

\begin{lem}\label{lem:equiv}
$C^1(a,b,c)  \cong C^1(b,a,c) \cong C^1(c,b,a)$.
\end{lem}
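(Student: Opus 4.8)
The plan is to exploit the fact that over $\FF_2$ a monomial matrix is simply a permutation matrix, so that $C^1(a,b,c) \cong C^1(a',b',c')$ holds as soon as one generator matrix can be carried to the other by a permutation of columns together with a change of basis of the row space. Concretely, if $A \in \mathrm{GL}_2(\FF_2)$ and $P$ is a permutation matrix with $A\,G^1(a,b,c)\,P = G^1(a',b',c')$, then $C^1(a,b,c)\cong C^1(a',b',c')$: left multiplication by $A$ does not change the row space, while right multiplication by $P$ produces an equivalent code. Thus the whole problem reduces to comparing the multiset of column vectors of $G^1(a,b,c)$ up to the action of $\mathrm{GL}_2(\FF_2)$ on $\FF_2^2$.

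First I would record this column multiset. Reading off $G^1(a,b,c)$, the $I_2$ block contributes one $(1,0)^T$ and one $(0,1)^T$; each of the two copies of $M(a,b,c)$ contributes $a$ columns $(1,1)^T$, $b$ columns $(1,0)^T$ and $c$ columns $(0,1)^T$; and the trailing column is $(1,1)^T$. Hence no column is $\0_2$, and the vectors $(1,1)^T$, $(1,0)^T$, $(0,1)^T$ occur with multiplicities $2a+1$, $2b+1$, $2c+1$ respectively. In this language the code $C^1(a,b,c)$ is completely described by the assignment of the three multiplicities $2a+1,\,2b+1,\,2c+1$ to the three nonzero vectors of $\FF_2^2$.

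The key observation is that $\mathrm{GL}_2(\FF_2)$ acts on the three nonzero vectors of $\FF_2^2$ as the full symmetric group $S_3$ (it has order $6$ and acts faithfully), while fixing $\0_2$. The transposition interchanging $(1,1)^T$ and $(1,0)^T$ and fixing $(0,1)^T$ is given by $A = \left(\begin{smallmatrix}1&0\\1&1\end{smallmatrix}\right)$, and the one interchanging $(1,1)^T$ and $(0,1)^T$ and fixing $(1,0)^T$ by $A = \left(\begin{smallmatrix}1&1\\0&1\end{smallmatrix}\right)$. Applying the first $A$ to $G^1(a,b,c)$ transforms the column multiset into that of $G^1(b,a,c)$, and applying the second into that of $G^1(c,b,a)$; in each case a permutation $P$ of the columns (grouping equal columns together) turns $A\,G^1(a,b,c)$ into exactly $G^1(b,a,c)$ or $G^1(c,b,a)$. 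This yields both asserted equivalences.

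The computation is short, so I do not expect a serious obstacle; the only point requiring care is the bookkeeping of multiplicities. One must not forget the two extra columns from the $I_2$ block and the single trailing $(1,1)^T$, since it is precisely these that make the multiplicities odd, $2a+1$ rather than $2a$, while the uniform shift by $1$ lets the same $S_3$-action permute them cleanly. Before invoking each $A$ I would double-check that it genuinely permutes the nonzero column vectors as claimed.
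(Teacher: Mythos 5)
Your proof is correct and takes essentially the same route as the paper: your two matrices $A=\left(\begin{smallmatrix}1&0\\1&1\end{smallmatrix}\right)$ and $A=\left(\begin{smallmatrix}1&1\\0&1\end{smallmatrix}\right)$ are exactly the row operations $(r_1,\,r_1+r_2)$ and $(r_1+r_2,\,r_2)$ used in the paper, and both arguments conclude by matching the column multiset with multiplicities $2a+1$, $2b+1$, $2c+1$ up to a column permutation. The only cosmetic difference is that the paper channels this bookkeeping through the auxiliary code $D(2a+1,2b+1,2c+1)$ generated by $M(2a+1,2b+1,2c+1)$, while you work directly on $G^1(a,b,c)$.
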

\begin{proof}
We denote the code with generator matrix of the form
$M(a,b,c)$ in~\eqref{eq:Mabc} by $D(a,b,c)$.
Let $r_i$ be the $i$-th row of $M(a,b,c)$.
By considering the matrices
${\displaystyle
\left(
\begin{array}{ccccccc}
r_1\\
r_1+r_2
\end{array}
\right)
}$ and
${\displaystyle
\left(
\begin{array}{ccccccc}
r_1+r_2\\
r_2\\
\end{array}
\right)
}$,
we have $D(a,b,c) =D(b,a,c)=D(c,b,a)$. 
Since $C^1(a,b,c)  \cong D(2a+1,2b+1,2c+1)$,
the result follows.
\end{proof}

The above two lemmas are used for 
a classification of LCD $[n,2,d_n]$ codes.

\begin{thm}\label{thm:C1}
\begin{itemize}
\item[\rm (i)]
For $t \ge 1$,
there are two inequivalent LCD $[6t,2,4t-1]$ codes.
\item[\rm (ii)]
For $t \ge 1$,
there is a unique LCD $[6t+1,2,4t]$ code, up to equivalence.
\item[\rm (iii)]
For $t \ge 1$,
there is a unique LCD $[6t+2,2,4t+1]$ code, up to equivalence.
\item[\rm (iv)]
For $t \ge 1$,
there is a unique LCD $[6t+3,2,4t+2]$ code, up to equivalence.
\end{itemize}
\end{thm}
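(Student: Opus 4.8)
The plan is to reduce every LCD $[n,2,d_n]$ code to one of the standard-form codes $C^\delta(a,b,c)$ and then count equivalence classes parameter by parameter. First I would fix $\delta$ according to the parity of $n$: for $n=6t$ and $n=6t+2$ (even) take $\delta=0$ and write $n=2m+2$, while for $n=6t+1$ and $n=6t+3$ (odd) take $\delta=1$ and write $n=2m+3$. Since each of these residues lies in $\{0,1,2,3\}\pmod 6$, Lemma~\ref{lem:2-0} guarantees that any LCD $[n,2,d_n]$ code $C$ satisfies $d(C^\perp)\ge 2$, so Proposition~\ref{prop:2cover} (when $\delta=0$) or Proposition~\ref{prop:2cover2} (when $\delta=1$) applies and $C$ is equivalent to some $C^\delta(a,b,c)$.

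Next I would pin down the admissible parameters. Because $d(n,2)=d_n$ is already the largest possible minimum weight, a code $C^\delta(a,b,c)$ of minimum weight $d_n$ must have all three nonzero exponents in the weight enumerator~\eqref{eq:2we} at least $d_n$; these are exactly conditions~\eqref{eq:ab}--\eqref{eq:bc}, together with the length condition~\eqref{eq:abc}, and after using Lemma~\ref{lem:bc} to normalize via~\eqref{eq:blec} we may assume $b\le c$. Thus $(a,b,c)$ lies in the set $S$ computed in Lemma~\ref{lem:abc}: namely $\{(t-1,t,t),(t,t-1,t)\}$ for parts (i) and (ii), and the singleton $\{(t,t,t)\}$ for parts (iii) and (iv). Conversely each such triple yields a genuine LCD $[n,2,d_n]$ code by Proposition~\ref{prop:k-cover} and the weight formula~\eqref{eq:2we}, so the classification reduces to counting equivalence classes among $\{C^\delta(a,b,c):(a,b,c)\in S\}$.

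The final step is the count. For (iii) and (iv) the set $S$ is a singleton, so uniqueness is immediate. For (ii), where $\delta=1$, the two candidate triples coincide: Lemma~\ref{lem:equiv} gives $C^1(t-1,t,t)\cong C^1(t,t-1,t)$ by interchanging the roles of $a$ and $b$, so again there is a unique code. The contrasting case is (i), where $\delta=0$: here Lemma~\ref{lem:equiv} is unavailable, and I would separate the two codes by their weight enumerators, computing from~\eqref{eq:2we} that $C^0(t-1,t,t)$ has enumerator $1+2y^{4t-1}+y^{4t+2}$ whereas $C^0(t,t-1,t)$ has $1+y^{4t-1}+y^{4t}+y^{4t+1}$; since these differ, the two codes are inequivalent and part (i) yields exactly two classes.

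The step I expect to be most delicate is not any single computation but keeping the $\delta=0$ and $\delta=1$ cases correctly aligned: the reason (i) produces two codes while the structurally parallel (ii) produces only one is entirely due to the extra symmetry of Lemma~\ref{lem:equiv} holding only for $\delta=1$. Verifying that this symmetry genuinely identifies the two triples in (ii), while no analogous identification is forced in (i), is the crux, and the weight-enumerator check is what certifies that no further collapse occurs.
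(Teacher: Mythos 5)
Your proposal is correct and follows essentially the same route as the paper's proof: reduce to the standard form $G^\delta(a,b,c)$ via Lemma~\ref{lem:2-0} and Propositions~\ref{prop:2cover} and~\ref{prop:2cover2}, normalize with Lemma~\ref{lem:bc}, restrict the parameters by Lemma~\ref{lem:abc}, then identify the two triples in case (ii) via Lemma~\ref{lem:equiv} and separate the two codes in case (i) by the weight enumerators from~\eqref{eq:2we}. The only cosmetic difference is that you make the converse (existence) direction explicit, which the paper leaves implicit.
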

\begin{proof}
Let $C$ be an LCD $[n,2]$ code for $n \ge 4$.
For the parameters $[6t,2,4t-1]$,
$[6t+1,2,4t]$, $[6t+2,2,4t+1]$ and $[6t+3,2,4t+2]$ ($t \ge 1$),
by Lemma~\ref{lem:2-0},
we may assume without loss of generality that 
$C$ has generator matrix of the form $G^\delta(a,b,c)$ for
$\delta=0,1,0,1$, respectively.
In addition, $C$ satisfies~\eqref{eq:ab}--\eqref{eq:abc}.
By Lemma~\ref{lem:bc},
we may assume without loss of generality that 
$C$ satisfies~\eqref{eq:blec}.

\begin{itemize}
\item[\rm (i)]
Assume that $n=6t$ $(t \ge 1)$.
By Lemma~\ref{lem:abc}~(ii), 
$(a,b,c)$ is $(t-1,t,t)$ or $(t,t-1,t)$.
Let $C_1$ and $C_2$ be the LCD codes with generator matrices
$G^0(a,b,c)$ for these $(a,b,c)$, respectively.
By~\eqref{eq:2we},
the codes $C_1$ and $C_2$ have 
the following weight enumerators:
\begin{align*}
1 + 2y^{4t-1} + y^{4t+2} \text{ and } 
1 + y^{4t-1} + y^{4t} + y^{4t+1}, 
\end{align*}
respectively.
Hence, the two codes are inequivalent.

\item[\rm (ii)]
Assume that $n=6t+1$ ($t \ge 1$).
By Lemma~\ref{lem:abc}~(i), 
$(a,b,c)$ is $(t-1,t,t)$ or $(t,t-1,t)$.
Let $C_1$ and $C_2$ be the LCD codes with generator matrices
$G^1(a,b,c)$ for these $(a,b,c)$, respectively.
By Lemma~\ref{lem:equiv}, 
$C_1$ and $C_2$ are equivalent.

\item[\rm (iii)]
For $n=6t+2$ $(t \ge 1)$,
the uniqueness follows from Lemma~\ref{lem:abc}~(ii).

\item[\rm (iv)]
For $n=6t+3$ $(t \ge 1)$,
the uniqueness follows from Lemma~\ref{lem:abc}~(i).
\end{itemize}
This completes the proof.
\end{proof}

We remark that
there is a unique LCD $[3,2,2]$ code, up to equivalence,
by Proposition~\ref{prop:n-1}.

\begin{lem}\label{lem:C2}
\begin{itemize}
\item[\rm (i)]
For $t \ge 0$,
there is a unique LCD $[6t+4,2,4t+2]$ code $C$ with
$d(C^\perp) \ge 2$, up to equivalence.
\item[\rm (ii)]
For $t \ge 1$,
there are two inequivalent LCD $[6t+5,2,4t+2]$ codes $C$ with
$d(C^\perp) \ge 2$.
\end{itemize}
\end{lem}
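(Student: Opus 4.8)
The plan is to mirror the proof of Theorem~\ref{thm:C1}, now carrying the hypothesis $d(C^\perp)\ge 2$ as an explicit assumption, since Lemma~\ref{lem:2-0} does not cover the residues $n\equiv 4,5\pmod 6$. In both parts I would first put $C$ into the normal form supplied by the $2$-cover machinery. For (i), writing $n=6t+4=2m+2$ with $m=3t+1$, Proposition~\ref{prop:2cover} lets me assume $C\cong C^0(a,b,c)$; for (ii), writing $n=6t+5=2m+3$ with $m=3t+1$, Proposition~\ref{prop:2cover2} lets me assume $C\cong C^1(a,b,c)$. Because $C$ has minimum weight $d_n=4t+2$, the three nonzero codeword weights, read off from the weight enumerator~\eqref{eq:2we}, are each at least $d_n$; these are precisely conditions~\eqref{eq:ab}--\eqref{eq:bc}. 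The length forces~\eqref{eq:abc}, and by Lemma~\ref{lem:bc} I may also impose~\eqref{eq:blec}. Hence the admissible parameter triples are exactly the set $S$ computed in Lemma~\ref{lem:abc}.

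For part (i) this finishes the argument immediately: Lemma~\ref{lem:abc}~(ii)(3) gives $S=\{(t+1,t,t)\}$, a single triple, so $C\cong C^0(t+1,t,t)$ and the code is unique up to equivalence.

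For part (ii), Lemma~\ref{lem:abc}~(i)(3) produces the four triples $(t-1,t+1,t+1)$, $(t,t,t+1)$, $(t+1,t-1,t+1)$ and $(t+1,t,t)$. The decisive reduction is Lemma~\ref{lem:equiv}: since $C^1(a,b,c)\cong C^1(b,a,c)\cong C^1(c,b,a)$ and the transpositions $(1\,2)$ and $(1\,3)$ generate the full symmetric group on the three parameters, the equivalence class of $C^1(a,b,c)$ depends only on the multiset $\{a,b,c\}$. The four triples fall into the two multisets $\{t-1,t+1,t+1\}$ and $\{t,t,t+1\}$, so there are at most two classes. To confirm these are genuinely distinct I would read off the weight enumerators from~\eqref{eq:2we} with $\delta=1$, namely
\[
1+2y^{4t+2}+y^{4t+6}\quad\text{and}\quad 1+y^{4t+2}+2y^{4t+4},
\]
which differ for every $t\ge 1$; hence the two codes are inequivalent and there are exactly two.

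I expect the only delicate point to be the bookkeeping in part (ii): one must verify that the two transpositions furnished by Lemma~\ref{lem:equiv} really do generate all of $S_3$ (so that the four listed triples collapse to two multisets) and then check via the weight enumerators that no further coincidence occurs. The normal-form reductions and part (i) are routine consequences of the results already assembled in Section~\ref{Sec:cover}.
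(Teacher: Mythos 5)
Your proposal is correct and follows essentially the same route as the paper: reduce to the normal forms $C^0(a,b,c)$ and $C^1(a,b,c)$ via Propositions~\ref{prop:2cover} and~\ref{prop:2cover2}, impose conditions~\eqref{eq:ab}--\eqref{eq:blec}, invoke Lemma~\ref{lem:abc} to get the single triple $(t+1,t,t)$ for part (i) and the four triples for part (ii), collapse the latter to two classes by Lemma~\ref{lem:equiv}, and separate those two by the weight enumerators $1+2y^{4t+2}+y^{4t+6}$ and $1+y^{4t+2}+2y^{4t+4}$. The paper's proof does exactly this (pairing $C_1\cong C_3$ and $C_2\cong C_4$ by the two transpositions directly rather than via your multiset observation), so your argument is a faithful reconstruction.
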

\begin{proof}
Let $C$ be an LCD $[n,2]$ code with $d(C^\perp) \ge 2$ and $n \ge 4$.
For the parameters 
$[6t+4,2,4t+2]$ ($t \ge 0$) and
$[6t+5,2,4t+2]$ ($t \ge 1$),
since $d(C^\perp)\ge 2$, 
we may assume without loss of generality that 
$C$ has generator matrix of the form $G^\delta(a,b,c)$ for
$\delta=0,1$, respectively.
In addition, $C$ satisfies~\eqref{eq:ab}--\eqref{eq:abc}.
By Lemma~\ref{lem:bc},
we may assume without loss of generality that 
$C$ satisfies~\eqref{eq:blec}.

\begin{itemize}
\item[\rm (i)]
For $n=6t+4$ $(t \ge 0)$,
the uniqueness follows from Lemma~\ref{lem:abc}~(ii).

\item[\rm (ii)]
Assume that $n=6t+5$ $(t \ge 1)$.
By Lemma~\ref{lem:abc}~(i), 
$(a,b,c)$ is $(t-1,t+1,t+1)$, $(t,t,t+1)$, $(t+1,t-1,t+1)$ or $(t+1,t,t)$.
Let $C_i$ $(i=1,2,3,4)$ be the LCD codes with generator 
matrices
$G^1(a,b,c)$ for these $(a,b,c)$, respectively.
By Lemma~\ref{lem:equiv}, 
$C_1 \cong C_3$ and $C_2 \cong C_4$.
By~\eqref{eq:2we},
the codes $C_1$ and $C_2$ have the following
weight enumerators:
\begin{align*}
1 + 2y^{4t+2} + y^{4t+6} \text{ and }
1 + y^{4t+2} + 2y^{4t+4},
\end{align*}
respectively.
Hence, the two codes are inequivalent.
\end{itemize}
This completes the proof.
\end{proof}

\begin{rem}
By~\cite[Theorem~3]{bound}, 
the dual codes of the codes given in the above lemma
have minimum weight $2$. 
\end{rem}

\begin{thm}\label{thm:C2}
\begin{itemize}
\item[\rm (i)]
For $t \ge 0$,
there are two inequivalent LCD $[6t+4,2,4t+2]$ codes.
\item[\rm (ii)]
For $t \ge 1$,
there are four inequivalent LCD $[6t+5,2,4t+2]$ codes.
\end{itemize}
\end{thm}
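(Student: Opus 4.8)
The plan is to reduce the full classification to the already-established classification of codes $C$ with $d(C^\perp)\ge 2$ in Lemma~\ref{lem:C2}, the only extra codes being those with $d(C^\perp)=1$. The bridge is the analysis of zero columns. For any LCD $[n,2,d]$ code $C$, the number $z$ of zero columns in a generator matrix $G$ equals $A_1(C^\perp)$: a weight-$1$ vector $e_i$ lies in $C^\perp$ exactly when the $i$-th column of $G$ is $\0_2$. Hence $z$ is an equivalence invariant. Deleting these $z$ columns produces a code $D$ of length $n-z$; since zero columns contribute nothing to $GG^T$ nor to the weight of any codeword, $D$ is again LCD with minimum weight $d$ by Proposition~\ref{prop:Massey}~(iii), and now $d(D^\perp)\ge 2$. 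Conversely, adjoining $z$ zero columns to such a $D$ returns a code equivalent to $C$, and all placements of the adjoined columns are equivalent by permuting columns. Thus, up to equivalence, LCD $[n,2,d]$ codes correspond bijectively to pairs $(z,[D])$, where $[D]$ ranges over equivalence classes of LCD $[n-z,2,d]$ codes with $d(D^\perp)\ge 2$.

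Next I would pin down the admissible range of $z$. Since deleting zero columns preserves the minimum weight $d=d_n=4t+2$, the reduced code $D$ has minimum weight $4t+2$ at length $n-z$, which forces $d(n-z,2)\ge 4t+2$. From the formula for $d(\cdot,2)$ recalled at the start of this section, $d(6t+2,2)=4t+1<4t+2$, so no reduced code of length $6t+2$ can carry minimum weight $4t+2$; this caps $z$ at $n-(6t+3)$. Consequently $z\in\{0,1\}$ in part~(i) and $z\in\{0,1,2\}$ in part~(ii). Moreover, at each relevant length $6t+3,6t+4,6t+5$ one has $d(\cdot,2)=4t+2$, so a reduced code of minimum weight $4t+2$ at these lengths is precisely an optimal reduced LCD code.

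Then I would assemble the counts. For part~(i), with $n=6t+4$: the term $z=0$ contributes the unique reduced LCD $[6t+4,2,4t+2]$ code of Lemma~\ref{lem:C2}~(i), while $z=1$ contributes the unique LCD $[6t+3,2,4t+2]$ code, which exists and is unique by Theorem~\ref{thm:C1}~(iv) (for $t=0$ by the remark following it) and is reduced by Lemma~\ref{lem:2-0} since $6t+3\equiv 3\pmod 6$. These give $1+1=2$ inequivalent codes, distinguished by the invariant $z=A_1(C^\perp)$. For part~(ii), with $n=6t+5$ and $t\ge 1$: the term $z=0$ contributes the two reduced codes of Lemma~\ref{lem:C2}~(ii), $z=1$ contributes the single reduced LCD $[6t+4,2,4t+2]$ code of Lemma~\ref{lem:C2}~(i), and $z=2$ contributes the unique reduced LCD $[6t+3,2,4t+2]$ code as above, for a total of $2+1+1=4$ inequivalent codes.

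The main obstacle is establishing the bijection cleanly rather than the final counting: I must verify that the count of zero columns is a genuine equivalence invariant, that deleting them neither raises the minimum weight nor destroys the LCD property, and that re-inserting them yields a well-defined class independent of position. Once that correspondence is secured, the remaining work is the sharp cutoff argument, namely that minimum weight $4t+2$ cannot survive below length $6t+3$ because $d(6t+2,2)=4t+1$, which bounds $z$ and makes the tallies above exhaustive.
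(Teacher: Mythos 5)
Your proof is correct and takes essentially the same route as the paper: the paper likewise splits the classification into codes with $d(C^\perp)\ge 2$, counted by Lemma~\ref{lem:C2}, and codes with $d(C^\perp)=1$, which it observes are exactly those obtained by appending a zero column to the already-classified optimal codes of length one less (Theorem~\ref{thm:C1} and part~(i)). Your stratification by the exact number $z$ of zero columns is just the paper's recursion unrolled (its count $2+2$ in part~(ii) is your $2+1+1$), with the added merit that you rigorously justify the correspondence the paper dismisses as ``easy to see.''
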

\begin{proof}
It is easy to see that
all LCD $[n+1,k,d]$ codes $C$ with $d(C^\perp)=1$,
which must be checked to achieve
a complete classification, can be obtained from
all inequivalent LCD $[n,k,d]$ codes.

\begin{itemize}
\item[\rm (i)]
By Theorem~\ref{thm:C1},
there is a unique LCD $[6t+3,2,4t+2]$ code, up to equivalence,
for $t \ge 1$.
The result follows from Lemma~\ref{lem:C2}.
\item[\rm (ii)] 
The result follows from Lemma~\ref{lem:C2} and the part (i).
\end{itemize}
This completes the proof.
\end{proof}

We remark that
there are three inequivalent LCD $[5,2,2]$ codes
(see Table~\ref{Tab:C}).

\section{LCD codes of dimension 3}
\label{Sec:k3}

The aim of this section is to establish the following theorem.
In Section~\ref{Sec:cover}, we gave some observation of LCD codes of
dimension $3$, which is established from the concept of 
$3$-covers of $m$-sets.
The observation is useful to do this.

\begin{thm}\label{thm:n3dmax}
For $n \ge 3$,
\[
d(n,3) =
\begin{cases}
\left\lfloor \frac{4n}{7}\right\rfloor & \text{ if } n \equiv 3,5 \pmod {7},\\
\left\lfloor \frac{4n}{7}\right\rfloor-1& \text{ otherwise.}
\end{cases}
\]
\end{thm}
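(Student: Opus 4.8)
The plan is to prove the two matching inequalities: an upper bound $d(n,3)\le\lfloor 4n/7\rfloor$ valid for all $n$, sharpened by $1$ when $n\not\equiv 3,5\pmod 7$, together with constructions meeting these values. For the upper bound I would start from a counting argument. Let $C$ be an LCD $[n,3]$ code with generator matrix $G$; as in the proof of Lemma~\ref{lem:dual2}, deleting a zero column yields an LCD $[n-1,3,d(C)]$ code, so I may assume $G$ has no zero column. Writing $a_v$ for the number of columns of $G$ equal to $v\in\FF_2^3\setminus\{\0_3\}$, the nonzero codeword $c_f$ indexed by a nonzero functional $f$ has $\wt(c_f)=\sum_{v\,:\,f\cdot v=1}a_v$, and since each nonzero $v$ satisfies $f\cdot v=1$ for exactly four of the seven functionals,
\[
\sum_{f\neq\0_3}\wt(c_f)=4\sum_v a_v=4n .
\]
Hence some codeword has weight at most $4n/7$, giving $d(C)\le\lfloor 4n/7\rfloor$; zero columns only decrease the sum, so the bound holds in general, and adjoining a zero column gives the monotonicity $d(n-1,3)\le d(n,3)$.

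The refinement rests on a parity analysis. By Proposition~\ref{prop:Massey}, $C$ is LCD if and only if $\det(GG^T)=1$ over $\FF_2$, and $GG^T=\sum_v a_v\,v v^T$ depends only on the parities $a_v\bmod 2$. Setting $\delta=\mathrm{diag}(GG^T)\in\FF_2^3$, one has $\wt(c_f)\equiv f\cdot\delta\pmod 2$. If $\delta=\0_3$ then $GG^T$ is an alternating $3\times 3$ matrix, hence singular, contradicting LCD-ness; so $\delta\neq\0_3$, which forces exactly four of the seven nonzero codewords to have odd weight and three to have even weight. Requiring every codeword weight to be at least $d:=\lfloor 4n/7\rfloor$ then gives
\[
4n=\sum_{f\neq\0_3}\wt(c_f)\ge 7d+3 ,
\]
with the right-hand side equal to $7d+4$ when $d$ is even. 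Since $4n=7d+r$ with $r=4n\bmod 7$, this is already contradictory when $r\le 2$, i.e.\ for $n\equiv 0,2,4\pmod 7$, proving $d(n,3)\le\lfloor 4n/7\rfloor-1$ in those cases.

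The genuinely hard cases are the remaining bad residues, $n\equiv 1\pmod 7$ (where $r=4$) and $n\equiv 6\pmod 7$ with $d$ odd (where $r=3$): here the parity--counting threshold above is satisfied, so excluding $d=\lfloor 4n/7\rfloor$ must use the full cubic condition $\det(GG^T)=1$ together with the non-negativity and integrality of the multiplicities $a_v$. I would make the linear system $\vec w=B\vec a$ explicit, where $B$ is the $7\times 7$ point--hyperplane incidence matrix of $\mathrm{PG}(2,2)$ (invertible over $\mathbb{Q}$), solve for the extremal weight vectors $\vec w$ whose minimum entry is $d$ and which realise the forced $4/3$ odd--even split, and check that each resulting $\vec a$ either fails to be a non-negative integer vector or makes $\det(GG^T)=0$. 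By Lemma~\ref{lem:2col}, removing a repeated pair of columns lowers $n$ by $2$ while preserving LCD-ness and $d(C^\perp)\ge 2$, so it suffices to resolve a bounded list of base lengths in each residue class; these finitely many cases can be handled through the explicit $3$-cover descriptions of Propositions~\ref{prop:3cover} and~\ref{prop:3cover2}. Carrying out this finite but delicate case analysis, and confirming that no LCD configuration survives, is where I expect the main difficulty to lie.

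For the matching lower bound I would build near-balanced codes. Writing $n=7q+s$ with $0\le s\le 6$, I assign each of the seven nonzero column types multiplicity $q$ or $q+1$, distributing the $s$ surplus columns so as to (i) maximise the minimum codeword weight and (ii) choose the parities $a_v\bmod 2$ so that $\det(GG^T)=1$. Such an assignment produces all weights close to $4q$, yielding minimum weight $\lfloor 4n/7\rfloor$ for $s$ in the good range and $\lfloor 4n/7\rfloor-1$ otherwise. By Propositions~\ref{prop:3cover} and~\ref{prop:3cover2} these codes are of the form $C(\YY)$ or $C'(\YY)$ for explicit $3$-covers $\YY=(Y_1,Y_2,Y_3)$, so verifying the minimum weight reduces to evaluating $|T|+2\,|\bigtriangleup_{i\in T}Y_i|$ (up to the correction from the appended columns of $C'$) over the nonempty $T\subseteq\{1,2,3\}$. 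Lengths not covered by a single pattern are then filled using the monotonicity above and the passage $C(\YY)\mapsto C'(\YY)$, reducing the construction to finitely many base covers.
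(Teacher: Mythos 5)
Your impossibility half is a genuinely different route from the paper's (which uses the Griesmer bound for $n\equiv 2\pmod 7$ and otherwise a case analysis of the $3$-cover canonical forms), and your averaging-plus-parity argument correctly disposes of $n\equiv 0,2,4\pmod 7$. However, the construction half contains a genuine error, not just an omission. Write $n=7q+s$, let $S$ be the set of column types assigned multiplicity $q+1$ and $D$ its complement. Since
\[
\sum_{v\in\FF_2^3\setminus\{\0_3\}}vv^T\equiv 0 \pmod 2 ,
\]
every assignment with $a_v\in\{q,q+1\}$ satisfies
\[
GG^T\equiv\sum_{v\in S}vv^T\equiv\sum_{v\in D}vv^T \pmod 2 ,
\]
and a sum of at most two rank-one matrices is a singular $3\times 3$ matrix. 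So by Proposition~\ref{prop:Massey}~(iii) a near-balanced LCD code can exist only when $|S|\ge 3$ and $|D|\ge 3$, i.e.\ only for $s\in\{3,4\}$. In particular, for $n\equiv 5\pmod 7$ --- one of the two residues where the theorem asserts the larger value $\lfloor 4n/7\rfloor$ is attained --- \emph{no} choice of surplus columns in your scheme produces an LCD code, and the same failure blocks your constructions for $s\in\{0,1,2,6\}$. This is why the paper's extremal codes in these residues have multiplicity spread two; e.g.\ for $n\equiv 5\pmod 7$ the (unique) optimal code has multiplicities $q+2$ on $\1_3$, $q+1$ on $e_1,e_2,e_3$, and $q$ on the remaining three types. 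Monotonicity cannot repair this: from $n-2\equiv 3\pmod 7$ it only gives $d(n,3)\ge 4q+1$, one short of the claimed $4q+2$.

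The second gap is that the hard residues $n\equiv 1,6\pmod 7$ are left as a plan. For the record, your plan does succeed, and more cleanly than you anticipate, with no reduction to base lengths and no appeal to the $3$-cover classification: equality is forced in $4n\ge 7d+3$ (for $n=7t+6$, $d=4t+3$ odd) resp.\ $4n\ge 7d+4$ (for $n=7t+1$, $d=4t$ even), so all four odd-weight codewords have weight exactly $d$ and all three even-weight ones exactly $d+1$ (resp.\ swapped); inverting the matrix $B$ via $B^{-1}=\tfrac12 B-\tfrac14 J$ then forces the unique multiplicity vector $a_\delta=t$, $a_v=t+1$ for $v\neq\delta$ (resp.\ $a_\delta=t+1$, $a_v=t$), and in either parity of $t$ one gets $GG^T\equiv\delta\delta^T\pmod 2$, which has rank one and is singular --- contradiction. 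So the upper-bound half of your argument is completable exactly as sketched, and is arguably more uniform than the paper's treatment (though the paper's cover parametrization buys, in addition, the uniqueness and classification statements in Theorems~\ref{thm:dim3-1} and~\ref{thm:dim3-2}). As submitted, though, the theorem is not proved: the existence side fails for five of the seven residue classes, including the critical case $n\equiv 5\pmod 7$.
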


In this section, 
we also establish the uniqueness of 
LCD $[n,3,d(n,3)]$ codes for 
$n \equiv 0, 2, 3,  5 \pmod {7}$ and $n \ge 5$.

Throughout this section, 
we denote $\left\lfloor \frac{4n}{7}\right\rfloor$ by $\alpha_n$.

\begin{lem}\label{lem:3-00}
There is no LCD $[n,3,\alpha_n]$ code
for $n \equiv 2 \pmod {7}$.
\end{lem}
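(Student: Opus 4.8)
We need to show that for $n \equiv 2 \pmod{7}$ there is no LCD $[n,3,\alpha_n]$ code, where $\alpha_n = \lfloor 4n/7\rfloor$. Let me write $n = 7s+2$. Then $4n = 28s+8$, so $\lfloor 4n/7\rfloor = 4s+1$, i.e. $\alpha_n = 4s+1$. So the claim is: there is no LCD $[7s+2,3,4s+1]$ code. This means the best possible minimum weight for $n\equiv 2$ is at most $\alpha_n - 1 = 4s$, consistent with the "otherwise" branch of Theorem 4.4 (stated as thm:n3dmax).

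The value $4n/7$ is essentially the Griesmer-type bound for dimension-3 codes (the simplex code $[7,3,4]$ is optimal). So $\alpha_n = \lfloor 4n/7\rfloor$ is the largest minimum weight achievable by *any* $[n,3]$ code (not just LCD ones), at least for most residues. The content here is that even when an ordinary $[n,3,\alpha_n]$ code exists, it can never be LCD when $n\equiv 2$.

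Let me think about the strategy.

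**Strategy via the covering / anticode approach.** For dimension 3, the standard approach: a $[n,3,d]$ code corresponds to a multiset of $n$ points (with multiplicity, allowing the zero column excluded if $d(C^\perp)\geq 2$) in $PG(2,2)$, which has 7 points. Actually for binary, columns of the generator matrix are vectors in $\mathbb{F}_2^3 \setminus \{0\}$ (if no zero column), and there are exactly 7 nonzero vectors. So a generator matrix (after removing zero columns) is described by 7 nonnegative integers $m_1,\dots,m_7$ — the multiplicities of the 7 nonzero points of $PG(2,2)$ — with $\sum m_i = n$ (assuming $d(C^\perp)\geq 2$). The weight of a nonzero codeword corresponding to a hyperplane (line complement) ... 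Let me recall the exact relationship.

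Each nonzero codeword of $C$ is $uG$ for some $u \in \mathbb{F}_2^3\setminus\{0\}$. Its weight is the number of columns $g_j$ with $u \cdot g_j = 1$. The set of columns with $u\cdot g_j = 0$ forms a hyperplane (a line of $PG(2,2)$ through... actually the columns lying in $u^\perp$). So $\mathrm{wt}(uG) = n - (\text{number of columns in the hyperplane } u^\perp)$. Each hyperplane (line) of $PG(2,2)$ contains exactly 3 of the 7 points. So if we let, for a line $\ell$, $w_\ell = \sum_{p\in \ell} m_p$ (sum of multiplicities of the three points on $\ell$), then the weight of the codeword dual to $\ell$ is $n - w_\ell$.

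Minimum weight $\geq d$ means: for every line $\ell$, $w_\ell \leq n - d$.

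There are 7 lines. Each point lies on 3 lines. Summing $w_\ell$ over all 7 lines counts each point 3 times: $\sum_\ell w_\ell = 3\sum_p m_p = 3n$. So the average of $w_\ell$ is $3n/7$. Since each $w_\ell \leq n-d$, we need $3n/7 \leq n-d$, giving $d \leq 4n/7$, the Griesmer bound.

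**The LCD condition.** Now the key extra ingredient. Proposition prop:Massey says $C$ is LCD iff $GG^T$ is nonsingular. With $G$ having columns at the 7 points with multiplicities $m_p$, we have $GG^T = \sum_p m_p\, g_p g_p^T$ over $\mathbb{F}_2$. This is a $3\times 3$ matrix over $\mathbb{F}_2$ whose entries depend only on the parities $m_p \bmod 2$. So LCD is a condition on the parity vector $(m_1 \bmod 2, \dots, m_7 \bmod 2) \in \mathbb{F}_2^7$.

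**Plan of proof.**
1. Reduce to the case $d(C^\perp)\geq 2$: by Lemma lem:dual2, if an LCD $[n,3,\alpha_n]$ code existed, then since presumably $d(n-1,3) \leq \alpha_n - 1$ (need to check — for $n\equiv 2$, $n-1\equiv 1\pmod 7$, and by the theorem $d(n-1,3) = \alpha_{n-1}-1 = \lfloor 4(n-1)/7\rfloor - 1$), we'd get $d(C^\perp)\geq 2$, i.e., no zero columns. **Caveat:** this uses the theorem we're in the middle of proving; since Lemma 4.1 (this lemma) likely comes first in the induction, I'd instead argue directly that any zero column can be deleted to give a shorter LCD code with the same $d$, and handle the base case / bound separately. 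Safest: assume for contradiction an LCD $[7s+2,3,4s+1]$ code exists and first pass to one with no zero column (deleting zero columns preserves LCD by prop:Massey (iii) since $GG^T$ is unchanged, and preserves $d$ — but shortens $n$). So really: WLOG the code is represented by multiplicities $m_1,\dots,m_7\geq 0$ on the 7 points with $\sum m_p = n' \leq n$ and min weight still $\geq 4s+1$; but shortening changes $n$, so I must be careful. Better to argue: an LCD $[n,3,d]$ code with a zero column yields an LCD $[n-1,3,d]$ code, and iterate; but the *claim* is about a specific $n$. The cleaner route: directly show the multiplicity/parity system is infeasible.

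2. Set up the counting identity $\sum_\ell w_\ell = 3n$ with the 7 lines and constraint $w_\ell \leq n - d = n - (4s+1) = 3s+1$ for all $\ell$. With $n = 7s+2$: $\sum_\ell w_\ell = 3(7s+2) = 21s+6$, and $7 \times (3s+1) = 21s+7 = \sum_\ell w_\ell + 1$. So the slack is exactly 1: **six of the seven lines must satisfy $w_\ell = 3s+1$ (the max) and exactly one line has $w_\ell = 3s$.** This is a very rigid configuration.

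3. Analyze this rigid configuration to pin down the multiplicities $m_p \bmod 2$, i.e. determine the parity vector up to the symmetry $GL(3,2)$ acting on $PG(2,2)$. The $w_\ell$ values are sums of three $m_p$'s; from the "six lines equal, one line one less" structure I would solve for the $m_p$ (or their parities). I expect this forces a specific parity pattern (or a small number of patterns up to the collineation group).

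4. For each resulting parity vector, compute $GG^T \bmod 2$ and show it is **singular**, contradicting LCD via prop:Massey (iii). This is the punchline: the geometry forces a parity pattern incompatible with $GG^T$ being invertible.

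**Main obstacle.** The crux is Step 4 combined with Step 3: showing that the rigid "6 lines at max, 1 line one below" configuration forces a parity vector making $GG^T$ singular. The difficulty is that $w_\ell = 3s+1$ constrains only the *sum* of three multiplicities, whereas LCD depends on their individual *parities*. The parities of the $m_p$ are not fully determined by the $w_\ell$ alone; I expect to need a parity/mod-2 version of the line-sum equations. Concretely, reducing all the line-sum constraints mod 2 gives a linear system over $\mathbb{F}_2$ relating the $m_p \bmod 2$ to the parities of the $w_\ell$. Since the six maximal lines and one deficient line have prescribed values, their parities are determined by $s \bmod 2$, and the resulting $\mathbb{F}_2$-system should determine the parity vector up to the radical of the incidence matrix. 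The incidence matrix of points vs. lines in $PG(2,2)$ (the $7\times7$ circulant) has known $\mathbb{F}_2$-rank 4 (its 2-rank is $4$), so its kernel is 3-dimensional — meaning the parity vector is determined up to a 3-dimensional affine space. I would then check that *every* parity vector in this solution coset yields singular $GG^T$. I anticipate this is where a short case check (or a clean invariant-theoretic argument, e.g. relating $\det GG^T$ to a quadratic form evaluated on the parity vector) is needed, and it is the step most likely to require a genuinely new idea rather than routine computation. A slick finish would be to identify $\det(GG^T) \bmod 2$ with an explicit function of the parity vector and show it vanishes identically on the forced coset.

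I would organize the writeup as: (a) geometric setup and the line-sum identity; (b) derive the rigidity (exactly one deficient line); (c) solve the mod-2 system to locate the parity vector in its $\mathbb{F}_2$-coset; (d) verify singularity of $GG^T$ on that coset, concluding no LCD code exists.
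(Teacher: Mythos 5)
There is a genuine gap, and it begins with the framing. You assert that for $n\equiv 2\pmod 7$ an ordinary $[n,3,\alpha_n]$ code may well exist and that the content of the lemma is that such a code ``can never be LCD.'' That premise is false: no $[n,3,\alpha_n]$ code exists at all for $n\equiv 2\pmod 7$, LCD or not. The bound you derive by averaging line sums over the seven lines of $PG(2,2)$ is the Plotkin-type bound $d\le 4n/7$, which only yields $d\le\alpha_n$; labelling it ``the Griesmer bound'' is exactly the source of the confusion. The true Griesmer bound, $n\ge d+\lceil d/2\rceil+\lceil d/4\rceil$, is strictly stronger here: writing $n=7s+2$ and $d=\alpha_n=4s+1$ it gives $n\ge(4s+1)+(2s+1)+(s+1)=7s+3>n$, a contradiction. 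That one-line computation, applied to unrestricted codes with the LCD hypothesis never used, is the paper's entire proof. Your proposal, by contrast, never reaches a contradiction: steps (c) and (d) are announced rather than executed, and you yourself flag (d) as the step ``most likely to require a genuinely new idea.''

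Moreover, steps (c)/(d) could not be completed as designed, because the object you plan to analyze does not exist. Pushing your own setup one step further: the rigid configuration of step (b) (six lines with $w_\ell=3s+1$, one line $\ell_0$ with $w_\ell=3s$) determines the multiplicities uniquely over $\mathbb{Q}$, since the Fano incidence matrix is nonsingular over $\mathbb{Q}$, and the solution is $m_p=s+\tfrac12$ for the four points off $\ell_0$ and $m_p=s$ for the three points on $\ell_0$ --- not integral. So the constraint system has no solutions whatsoever; the ``forced coset'' of parity vectors on which you intend to test singularity of $GG^T$ is empty, and the LCD parity obstruction you built the argument around is not the reason the lemma is true. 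Your machinery is salvageable --- the integrality contradiction finishes the proof within your framework, and zero columns are easily dismissed since deleting them forces $d\le 4(n-1)/7<\alpha_n$ --- but as written the proposal is a plan resting on a wrong expectation, not a proof, and it badly overcomplicates what the paper settles with one application of the Griesmer bound.
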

\begin{proof}
Suppose that there is an (unrestricted) $[n,3,d]$ code.
By the Griesmer bound, we have
\[
n \ge d+ \left\lceil \frac{d}{2} \right\rceil 
+ \left\lceil \frac{d}{4} \right\rceil.
\]
Hence, we have
\[
d(n,3) \le
\begin{cases}
\alpha_n-1 & \text{ if } n \equiv 2 \pmod {7},\\
\alpha_n   & \text{ otherwise.}
\end{cases} 
\]
The result follows.
\end{proof}

\begin{lem}\label{lem:3-0}
Suppose that $n \ge 3$ and $n \equiv 0,4,6 \pmod{7}$.
If there is an LCD $[n,3,\alpha_n]$ code $C$, then $d(C^\perp) \ge 2$.
\end{lem}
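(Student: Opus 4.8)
The plan is to reduce the claim to Lemma~\ref{lem:dual2}, exactly as in the proof of Lemma~\ref{lem:2-0}. Writing $d=\alpha_n$ for the minimum weight of the hypothesized code $C$, Lemma~\ref{lem:dual2} tells us that it suffices to verify the single inequality $d(n-1,3)\le \alpha_n-1$; from this the conclusion $d(C^\perp)\ge 2$ follows at once.

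First I would record the residue of $n-1$ in each case: when $n\equiv 0,4,6\pmod 7$ we have $n-1\equiv 6,3,5\pmod 7$ respectively, so in particular $n-1\not\equiv 2\pmod 7$. This matters because the Griesmer bound, in the form derived in the proof of Lemma~\ref{lem:3-00} (where the displayed inequality gives $d(m,3)\le\alpha_m$ for every $m\not\equiv 2\pmod 7$, even though the statement of that lemma only isolates the case $m\equiv 2$), applies with $m=n-1$ to give $d(n-1,3)\le\alpha_{n-1}$.

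Second I would establish the floor identity $\alpha_{n-1}=\alpha_n-1$ for each of the three classes by a direct computation of $\lfloor 4n/7\rfloor$. Writing $4n=7\alpha_n+r_n$ with $0\le r_n\le 6$, one finds $r_n=0,2,3$ for $n\equiv 0,4,6\pmod 7$ and correspondingly $r_{n-1}=3,5,6$, i.e. $r_{n-1}=r_n+3$ throughout; hence $7\alpha_{n-1}=4(n-1)-r_{n-1}=7\alpha_n-7$, so $\alpha_{n-1}=\alpha_n-1$ in every case. This computation also confirms $n-1\ge 3$ for the smallest relevant lengths $n=7,4,6$, so that $d(n-1,3)$ is defined.

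Combining the two steps gives $d(n-1,3)\le\alpha_{n-1}=\alpha_n-1=d-1$, and Lemma~\ref{lem:dual2} then yields $d(C^\perp)\ge 2$. There is no substantive obstacle here; the only points demanding care are extracting the general upper bound $d(m,3)\le\alpha_m$ from the \emph{proof} rather than the statement of Lemma~\ref{lem:3-00}, and getting the floor arithmetic right so that the margin is exactly $1$ in all three residue classes.
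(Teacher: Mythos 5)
Your proof is correct and follows essentially the same route as the paper: the paper's proof also tabulates $\alpha_n$ by residue mod $7$ to see that $\alpha_{n-1}=\alpha_n-1$ in the cases $n\equiv 0,4,6\pmod 7$, implicitly uses the Griesmer-bound estimate $d(n-1,3)\le\alpha_{n-1}$ from the proof of Lemma~\ref{lem:3-00}, and then invokes Lemma~\ref{lem:dual2}. Your version merely makes explicit (via the remainder computation $r_{n-1}=r_n+3$) what the paper leaves to the reader.
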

\begin{proof}
Write $n=7t+s$, where $0 \le s \le 6$.
For $s$ and $\alpha_n$, we have the following:
\begin{center}
\begin{tabular}{cc|cc|cc|cc}
$s$ & $\alpha_n$ &$s$ & $\alpha_n$ &$s$ & $\alpha_n$ &
$s$ & $\alpha_n$ \\
\hline
 0 &$4t$   & 2 &$4t+1$ & 4 &$4t+2$ & 6 &$4t+3$ \\
 1 &$4t$   & 3 &$4t+1$ & 5 &$4t+2$ &   &\\
\end{tabular}
\end{center}
The result follows by Lemma~\ref{lem:dual2}.
\end{proof}

For nonnegative integers $a,b,c,d,e,f,g,m,\alpha$ and $\delta \in \{0,1\}$, we consider the following
conditions:
\begin{align}
&\alpha \le 1+2(a+b+f+g),   \label{eq:abfg}\\
&\alpha \le 1+2(a+c+e+g)+\delta,   \label{eq:aceg}\\
&\alpha \le 1+2(a+d+e+f)+\delta,   \label{eq:adef}\\
&\alpha \le 2+2(b+c+e+f)+\delta,   \label{eq:bcef}\\
&\alpha \le 2+2(b+d+e+g)+\delta,   \label{eq:bdeg}\\
&\alpha \le 2+2(c+d+f+g),   \label{eq:cdfg}\\
&\alpha \le 3+2(a+b+c+d),   \label{eq:abcd}\\
&a+b+c+d+e+f+g =m.     \label{eq:abcdefg}
\end{align}
Define the following sets:
\begin{align*}
R_1=\left\{r \in \ZZ \mid \alpha-m-\frac{3+\delta}{2} \le r \le m-\frac{3}{4}\alpha+\frac{3+\delta}{2} \right\},\\
R_2=\left\{r \in \ZZ \mid \alpha-m-\frac{4+\delta}{2} \le r \le m-\frac{3}{4}\alpha+\frac{2+\delta}{2} \right\}.
\end{align*}

\begin{lem} \label{lem:abcdefg}
Let $a,b,c,d,e,f,g$ be nonnegative integers satisfying the 
conditions~\eqref{eq:abfg}--\eqref{eq:abcdefg}.
\begin{itemize}
\item[\rm (i)] 
If $\delta=0$, then
$a,e, f,g \in R_1$ and $b,c,d \in R_2$.
\item[\rm (ii)] 
If $\delta=1$, then
$a,f,g \in R_1$ and $b,c,d,e \in R_2$.
\end{itemize}
\end{lem}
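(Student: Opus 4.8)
The plan is to exploit the combinatorial symmetry hidden in the index sets. Regard $a,b,c,d,e,f,g$ as the seven points of a $2$-$(7,4,2)$ design whose seven blocks of size $4$ are precisely the index sets of the constraints \eqref{eq:abfg}--\eqref{eq:abcd}; concretely, every variable occurs in exactly four of these seven constraints, and every pair of variables occurs together in exactly two of them (equivalently, the complementary triples are the lines of the Fano plane). Every bound will be produced by adding a suitable subset of the constraints, so that the coefficients collapse under this incidence pattern, after which the bulk of the variables is eliminated through \eqref{eq:abcdefg}.

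For the upper bound on a given variable $x$, I would add the three constraints among \eqref{eq:abfg}--\eqref{eq:abcd} whose index sets omit $x$. Since each of the remaining six variables lies in exactly two of these three index sets, the linear part of the sum equals $4(m-x)$ after substituting \eqref{eq:abcdefg}, giving $x \le m - \frac34\alpha + \frac{\sigma(x)}{4}$, where $\sigma(x)$ is the sum of the additive constants of the three omitting constraints. For the lower bound on $x$, I would instead take the four constraints whose index sets contain $x$; rewriting each block sum as $m$ minus the sum over its complementary triple via \eqref{eq:abcdefg} turns each into an upper bound on that triple, and adding the four (each non-$x$ variable now appears in exactly two of these complementary triples) yields $4(m-x) \le 8m - 4\alpha + \tau(x)$, i.e.\ $x \ge \alpha - m - \frac{\tau(x)}{4}$, where $\tau(x)$ is the sum of the additive constants of the four containing constraints. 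It then remains to read off $\sigma(x)$ and $\tau(x)$ from the constants $1,\,1+\delta,\,1+\delta,\,2+\delta,\,2+\delta,\,2,\,3$ for each of the seven variables and to compare them with the endpoints defining $R_1$ and $R_2$.

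The bookkeeping is routine but carries one genuine subtlety, which I expect to be the crux. For $a,f,g$ one finds $\sigma=\tau=6+2\delta$, producing exactly the endpoints of $R_1$, while for $b,c,d$ one finds $\tau=8+2\delta$ and $\sigma=4+2\delta$, producing exactly the endpoints of $R_2$; these memberships hold for both values of $\delta$. The exceptional variable is $e$, which is the unique point lying in all four blocks that carry the $+\delta$ term (namely \eqref{eq:aceg}, \eqref{eq:adef}, \eqref{eq:bcef} and \eqref{eq:bdeg}); consequently $\tau(e)=6+4\delta$ while $\sigma(e)=6$, so that the bounds on $e$ read $\alpha-m-\frac{3+2\delta}{2} \le e \le m - \frac34\alpha + \frac32$. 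When $\delta=0$ these coincide with the defining endpoints of $R_1$, and when $\delta=1$ they coincide with those of $R_2$; this single $\delta$-dependent switch is exactly what produces the two different groupings in parts (i) and (ii). Finally, since each variable is an integer lying in the stated real interval, it lies in the corresponding integer set $R_1$ or $R_2$, which completes the argument.
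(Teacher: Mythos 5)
Your proposal is correct and follows essentially the same route as the paper's proof: both obtain the upper bound on each variable by summing the three conditions among \eqref{eq:abfg}--\eqref{eq:abcd} that omit it and the lower bound by summing the four that contain it, then eliminating the remaining variables via \eqref{eq:abcdefg}. Your Fano-plane bookkeeping is simply a uniform packaging of the computation the paper carries out for the representative cases $a$ and $b$, with the added merit that it makes explicit the exceptional variable $e$, whose $\delta$-dependent switch between $R_1$ and $R_2$ the paper subsumes under ``all cases are similar.''
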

\begin{proof}
All cases are similar, and we only give the details
for $a \in R_1$ and $b \in R_2$.

From~\eqref{eq:bcef}, \eqref{eq:bdeg}, \eqref{eq:cdfg} and \eqref{eq:abcdefg}, we have
$a \le m-\frac{3}{4}\alpha+\frac{3+\delta}{2}$.
From~\eqref{eq:abfg}, \eqref{eq:aceg}, \eqref{eq:adef}, \eqref{eq:abcd}
 and \eqref{eq:abcdefg}, we have
$\alpha-m-\frac{3+\delta}{2} \le a$.
Similarly, from~\eqref{eq:aceg}, \eqref{eq:adef}, \eqref{eq:cdfg} and \eqref{eq:abcdefg}, we have
$b \le m-\frac{3}{4}\alpha+\frac{4+\delta}{2}$.
From~\eqref{eq:abfg}, \eqref{eq:bcef}, \eqref{eq:bdeg}, \eqref{eq:abcd}
 and \eqref{eq:abcdefg}, we have
$\alpha-m-\frac{4+\delta}{2} \le b$.
The result follows.
\end{proof}

Now suppose that $C$ and $C'$ are an LCD $[2m+3,3]$ code 
with $d(C^\perp) \ge 2$
and an LCD $[2m+4,3]$ code
with $d(C'^\perp) \ge 2$, respectively for 
$m \ge 1$.
By Propositions~\ref{prop:3cover} and~\ref{prop:3cover2},
we may assume without loss of generality that 
$C$ and $C'$ have generator matrices of the following form:
\begin{align*}
&
\left(
\begin{array}{ccccc}
1&0&0&& \\
0&1&0& M(a,b,c,d,e,f,g) & M(a,b,c,d,e,f,g) \\
0&0&1&&
\end{array}
\right) \text{ and }
\\&
\left(
\begin{array}{cccccc}
1&0&0&&& 0 \\
0&1&0& M(a,b,c,d,e,f,g) & M(a,b,c,d,e,f,g) & 1\\
0&0&1&&& 1
\end{array}
\right),
\end{align*}
respectively,
where 
\begin{equation}\label{eq:Mabcdefg}
M(a,b,c,d,e,f,g)
=
\left(
\begin{array}{ccccccc}
\1_{a} & \1_{b} & \0_{c} & \0_{d} & \0_{e} & \1_{f} & \1_{g} \\
\1_{a} & \0_{b} & \1_{c} & \0_{d} & \1_{e} & \0_{f} & \1_{g} \\
\1_{a} & \0_{b} & \0_{c} & \1_{d} & \1_{e} & \1_{f} & \0_{g}
\end{array}
\right).
\end{equation}
We denote the codes by
$C^0(a,b,c,d,e,f,g)$ and $C^1(a,b,c,d,e,f,g)$, respectively.
Then the codes $C^\delta(a,b,c,d,e,f,g)$ have the following weight enumerators for $\delta \in \{0,1\}$:
\begin{align}\label{eq:3we}
\begin{split}
1
&+y^{1+2(a+b+f+g)}+y^{1+2(a+c+e+g)+\delta}+y^{1+2(a+d+e+f)+\delta}\\
&+y^{2+2(b+c+e+f)+\delta}+y^{2+2(b+d+e+g)+\delta}+y^{2+2(c+d+f+g)}
+y^{3+2(a+b+c+d)}.
\end{split}
\end{align}

\begin{lem}\label{lem:3-2}
There is no LCD $[n,3,\alpha_n]$ code
for $n \equiv 0, 4,6 \pmod {7}$.
\end{lem}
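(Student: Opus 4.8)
The plan is to argue by contradiction, reducing any hypothetical code to the explicit family $C^\delta(a,b,c,d,e,f,g)$ and then showing its defining parameters cannot exist. First I would suppose that an LCD $[n,3,\alpha_n]$ code $C$ exists for some $n \equiv 0,4,6 \pmod 7$. By Lemma~\ref{lem:3-0} we have $d(C^\perp)\ge 2$, so Proposition~\ref{prop:3cover} (when $n$ is odd) or Proposition~\ref{prop:3cover2} (when $n$ is even) lets me assume $C\cong C^\delta(a,b,c,d,e,f,g)$, with $\delta=0$ or $1$ according to the parity of $n$ and $m=(n-3-\delta)/2$. Since $d(C)=\alpha_n=:\alpha$, the weight enumerator \eqref{eq:3we} forces the seven inequalities \eqref{eq:abfg}--\eqref{eq:abcd} together with \eqref{eq:abcdefg}, so Lemma~\ref{lem:abcdefg} applies and confines the parameters to the intervals $R_1,R_2$.

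The quantitative engine of the proof is the observation that, writing $n=2m+3+\delta$, both $R_1$ and $R_2$ are intervals of the same real width $2m-\tfrac74\alpha+3+\delta = n-\tfrac74\alpha$, which equals $0$, $\tfrac12$, and $\tfrac34$ for $n\equiv 0,4,6 \pmod 7$ respectively. In every case this width is strictly less than $1$, so each of $R_1,R_2$ contains at most one integer. After substituting $\alpha=\lfloor 4n/7\rfloor$ and the appropriate $m$, each set is therefore either empty or a single integer, and the memberships dictated by Lemma~\ref{lem:abcdefg} then pin every one of $a,\dots,g$ to a prescribed value; this is what renders the whole analysis finite.

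I would then split into the six cases given by the three residues and the two parities $\delta\in\{0,1\}$. For $n\equiv 0 \pmod 7$ the relevant endpoint ($n/14$ for $R_1$ when $\delta=0$, and $n/14-\tfrac12$ for $R_2$ when $\delta=1$) is a non-integer, so one of $R_1,R_2$ is empty, immediately contradicting the existence of the parameters required by Lemma~\ref{lem:abcdefg}. For $n\equiv 6 \pmod 7$ both sets are singletons forcing all parameters, but the forced values sum to $m-1$, contradicting \eqref{eq:abcdefg}. For $n\equiv 4 \pmod 7$ the forced values do satisfy \eqref{eq:abcdefg}, so here I would substitute them into \eqref{eq:3we}: the resulting minimum weight equals $\alpha-1$, violating whichever of \eqref{eq:abfg}--\eqref{eq:abcd} is the minimizing exponent (namely \eqref{eq:abcd} when $\delta=0$ and \eqref{eq:abfg} when $\delta=1$). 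In all six cases the assumed code cannot exist.

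The routine part is the bookkeeping of endpoints, since $R_1,R_2$ depend on the parity of $n$ both through $m$ and through the constants $(3+\delta)/2$ and $(4+\delta)/2$; the main care lies in correctly locating the unique integer inside each interval of length below $1$ and in tracking which residue–parity combination collapses to an empty set, to a failed cardinality equation, or to a weight violation. I expect the only genuinely delicate case to be $n\equiv 4$, where the parameter count is consistent and one must actually evaluate the minimum weight rather than merely detect emptiness or a mismatch in \eqref{eq:abcdefg}; the remaining cases terminate more cheaply.
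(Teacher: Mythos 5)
Your proposal is correct for $n \ge 6$ and follows essentially the same route as the paper's proof: Lemma~\ref{lem:3-0} gives $d(C^\perp)\ge 2$, Propositions~\ref{prop:3cover} and~\ref{prop:3cover2} give the normal form $C^\delta(a,b,c,d,e,f,g)$, Lemma~\ref{lem:abcdefg} confines the parameters to $R_1,R_2$, and your six residue--parity cases end in exactly the same three kinds of contradiction the paper exhibits: for $n\equiv 0 \pmod 7$ one of $R_1,R_2$ is empty ($R_1$ when $\delta=0$, $R_2$ when $\delta=1$); for $n\equiv 6\pmod 7$ the forced parameters sum to $m-1$, violating \eqref{eq:abcdefg}; and for $n\equiv 4\pmod 7$ the forced parameters violate \eqref{eq:abfg} (when $\delta=1$) or \eqref{eq:abcd} (when $\delta=0$). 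Your unifying observation that both $R_1$ and $R_2$ are intervals of width $n-\tfrac{7}{4}\alpha_n\in\{0,\tfrac{1}{2},\tfrac{3}{4}\}<1$ is a nice way to see at once why each set has at most one element; the paper reaches the same conclusion by computing $R_1,R_2$ case by case, and your forced values agree with the paper's in every case.

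There is, however, one genuine gap: the case $n=4$. Since $4\equiv 4\pmod 7$ and $\alpha_4=2$, the lemma asserts in particular that no LCD $[4,3,2]$ code exists, but your reduction cannot be invoked there. For even $n$ you rely on Proposition~\ref{prop:3cover2}, which is stated only for $m\ge 1$, i.e., for length $2m+4\ge 6$, whereas $n=4$ corresponds to $m=0$; so the normal form $C^1(a,\ldots,g)$ is not justified for this length, and your uniform treatment of $n=14t+4$ silently includes the unjustified case $t=0$. The paper avoids this by disposing of $n=4$ at the outset, citing the fact that there is no LCD $[4,3,2]$ code (\cite[Table~1]{bound}), and only then assuming $n\ge 6$. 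Your proof needs the same separate treatment (a citation or a short direct check of the $[4,3]$ codes); with that one-line patch, the rest of your argument is complete.
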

\begin{proof}
There is no LCD $[4,3,2]$ code (see~\cite[Table~1]{bound}).
Assume that $n \equiv 0,4,6 \pmod {7}$ and $n \ge 6$.
Suppose that there is an LCD $[n,3,\alpha_n]$ code $C$.
By Lemma~\ref{lem:3-0}, $d(C^\perp) \ge 2$.
Hence, $C  \cong C^0(a,b,c,d,e,f,g)$ if $n \equiv 7,11,13 \pmod {14}$ 
and $C  \cong C^1(a,b,c,d,e,f,g)$ if $n \equiv 0,4,6 \pmod {14}$
for some $(a,b,c,d,e,f,g)$.

Since $C$ has minimum weight $\alpha_n$, $(a,b,c,d,e,f,g)$ 
satisfies~\eqref{eq:abfg}--\eqref{eq:abcdefg} with $n=3+2m+\delta$ and
$\alpha=\alpha_n$.

\begin{itemize}
\item $(n,\alpha_n)=(14t,8t)$ $(t \ge 1)$: We have
$R_2=\emptyset$,
which is a contradiction. 

\item $(n,\alpha_n)=(14t+4,8t+2)$ $(t \ge 1)$
and
$(14t+6,8t+3)$ $(t \ge 0)$: We have
\[(a,b,c,d,e,f,g)=(t,t,t,t,t,t,t)\]
by Lemma~\ref{lem:abcdefg}.
These contradict~\eqref{eq:abfg} and \eqref{eq:abcdefg},
respectively.

\item $(n,\alpha_n)=(14t+7,8t+4)$ $(t \ge 0)$: We have
$R_1=\emptyset$,
which is a contradiction. 

\item $(n,\alpha_n)=(14t+11,8t+6)$ 
and
$(14t+13,8t+7)$  $(t \ge 0)$: We have
\[(a,b,c,d,e,f,g)=(t+1,t,t,t,t+1,t+1,t+1)\]
by Lemma~\ref{lem:abcdefg}.
These contradict~\eqref{eq:abcd} and \eqref{eq:abcdefg},
respectively.
%
\end{itemize}
This completes the proof.
\end{proof}

%
%

\begin{lem}\label{lem:3-3}
There is no LCD $[n,3,\alpha_n]$ code for $n \equiv 1 \pmod {7}$.
\end{lem}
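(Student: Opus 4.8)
The plan is to follow the exact same template used in Lemma~\ref{lem:3-2}. The statement to prove is that there is no LCD $[n,3,\alpha_n]$ code when $n\equiv 1\pmod 7$. First I would check the base case by hand or by citation: the smallest relevant length congruent to $1$ modulo $7$ is $n=8$ (since $n=1$ is degenerate for dimension $3$), and I would verify there is no LCD $[8,3,\alpha_8]$ code, where $\alpha_8=\lfloor 32/7\rfloor=4$. Then assume $n\equiv 1\pmod 7$ with $n$ large enough and suppose toward a contradiction that an LCD $[n,3,\alpha_n]$ code $C$ exists.

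The key issue is that Lemma~\ref{lem:3-0} only guarantees $d(C^\perp)\ge 2$ for $n\equiv 0,4,6\pmod 7$, and $1$ is \emph{not} among these residues. So the first real step is to establish that we may still assume $d(C^\perp)\ge 2$. I expect this follows from Lemma~\ref{lem:dual2}: one needs $d(n-1,3)\le \alpha_n-1$, i.e.\ that deleting a zero column cannot occur without dropping below the claimed optimum. For $n\equiv 1\pmod 7$ we have $n-1\equiv 0\pmod 7$; I would compute $d(n-1,3)$ using the (already proved) nonexistence results together with the Griesmer bound from Lemma~\ref{lem:3-00}, and check that $d(n-1,3)<\alpha_n$. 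Once $d(C^\perp)\ge 2$ is secured, Propositions~\ref{prop:3cover} and~\ref{prop:3cover2} let me write $C\cong C^\delta(a,b,c,d,e,f,g)$ with the appropriate $\delta$ determined by $n\bmod 14$.

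With $C$ in this normal form, the argument becomes purely arithmetic, exactly as in Lemma~\ref{lem:3-2}. The tuple $(a,b,c,d,e,f,g)$ must satisfy conditions~\eqref{eq:abfg}--\eqref{eq:abcdefg} with $\alpha=\alpha_n$ and $n=3+2m+\delta$. I would split into the two subcases $n\equiv 1\pmod{14}$ (so $\delta=0$, with $\alpha_n=8t+\,$the appropriate constant) and $n\equiv 8\pmod{14}$ (so $\delta=1$), apply Lemma~\ref{lem:abcdefg} to pin down the ranges $R_1,R_2$ for each coordinate, and in each case derive a contradiction. The expected outcomes mirror the previous lemma: either some range $R_1$ or $R_2$ is empty, or the ranges force a unique candidate tuple that then violates one of the weight inequalities~\eqref{eq:abfg}--\eqref{eq:abcd} or the total-count equation~\eqref{eq:abcdefg}.

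The main obstacle I anticipate is the preliminary step of reducing to $d(C^\perp)\ge 2$, since for $n\equiv 1\pmod 7$ the ready-made Lemma~\ref{lem:3-0} does not apply; getting the bound $d(n-1,3)<\alpha_n$ to work out requires carefully invoking the nonexistence result for $n-1\equiv 0\pmod 7$ (which the theorem's Griesmer half, Lemma~\ref{lem:3-00}, and Lemma~\ref{lem:3-2} should supply) so that $d(n-1,3)=\alpha_{n-1}-1$, and then verifying the numerical inequality against $\alpha_n$. Once that reduction is in hand, the remaining arithmetic case analysis is routine and should close out exactly in the style of Lemma~\ref{lem:3-2}.
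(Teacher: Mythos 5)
Your proposal follows essentially the same route as the paper's proof: reduce to $d(C^\perp)\ge 2$ via Lemma~\ref{lem:dual2}, using $\alpha_n=\alpha_{n-1}$ and the nonexistence result of Lemma~\ref{lem:3-2} for $n-1\equiv 0\pmod 7$ to get $d(n-1,3)\le\alpha_n-1$, then pass to the normal forms $C^\delta(a,b,c,d,e,f,g)$ via Propositions~\ref{prop:3cover} and~\ref{prop:3cover2} and rule out all candidate tuples by the arithmetic conditions~\eqref{eq:abfg}--\eqref{eq:abcdefg} and Lemma~\ref{lem:abcdefg}, exactly as in Lemma~\ref{lem:3-2}. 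The only cosmetic differences are that the paper needs no separate base case (the case $(n,\alpha_n)=(14t+8,8t+4)$ with $t=0$ covers $n=8$) and that in the actual case analysis the ranges leave three candidate tuples, each contradicting a different weight inequality, rather than a unique one.
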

\begin{proof}
Assume that $n \equiv 1 \pmod {7}$ and $n \ge 8$.
Suppose that there is an LCD $[n,3,\alpha_n]$ code $C$.
Since $n-1 \equiv 0 \pmod {7}$ and $\alpha_n=\alpha_{n-1}$, we have
\[d(n-1,3) \le \alpha_{n-1}-1=\alpha_{n}-1\]
by Lemma~\ref{lem:3-2}.
By Lemma~\ref{lem:dual2},
$d(C^\perp) \ge 2$.
Hence, 
$C  \cong C^0(a,b,c,d,e,f,g)$ if $n \equiv 1 \pmod{14}$ and 
$C  \cong C^1(a,b,c,d,e,f,g)$ if $n \equiv 8 \pmod{14}$
for some $(a,b,c,d,e,f,g)$.

Since $C$ has minimum weight $\alpha_n$, $(a,b,c,d,e,f,g)$ 
satisfies~\eqref{eq:abfg}--\eqref{eq:abcdefg} with $n=3+2m+\delta$ and
$\alpha=\alpha_n$.
\begin{itemize}
\item $(n,\alpha_n)=(14t+1,8t)$ $(t \ge 1)$: We have
\begin{align*}
(a,e,f,g)=(t,t,t,t) \text{ and }  b,c,d \in \{t-1,t\}
\end{align*}
by Lemma~\ref{lem:abcdefg}.
From~\eqref{eq:abcdefg}, $b+c+d=3t-1$.
Hence, we have
\[
(b,c,d) =(t-1,t,t), (t,t-1,t) \text{ and } (t,t,t-1).
\]
These contradict~\eqref{eq:abfg}, \eqref{eq:aceg} and
\eqref{eq:adef}, respectively.

\item $(n,\alpha_n)=(14t+8,8t+4)$  $(t \ge 0)$:  We have
\begin{align*}
a,f,g \in \{t,t+1\} \text{ and }  (b,c,d,e)=(t,t,t,t)
\end{align*}
by Lemma~\ref{lem:abcdefg}.
From~\eqref{eq:abcdefg}, $a+f+g=3t+2$.
Hence, we have
\[
(a,f,g) =(t,t+1,t+1),(t+1,t,t+1) \text{ and }(t+1,t+1,t).
\]
These contradict~\eqref{eq:abcd},
\eqref{eq:bcef} and \eqref{eq:bdeg}, respectively.
\end{itemize}
This completes the proof.
\end{proof}

Hence, from Lemmas~\ref{lem:3-00}, \ref{lem:3-2} and \ref{lem:3-3}, we have
\begin{equation}\label{eq:3new}
d(n,3) \le \alpha_n-1, 
\end{equation}
if $n\equiv 0,1,2,4,6 \pmod7$.

Suppose that $C$ is an LCD $[n,3,\alpha_n]$ code for 
$n \equiv 3,  5 \pmod {7}$ and $n \ge 5$.
By Lemmas~\ref{lem:dual2},
\ref{lem:3-00} and \ref{lem:3-2}, $d(C^\perp) \ge 2$.
Hence, by Propositions~\ref{prop:3cover} and~\ref{prop:3cover2},
$C \cong C^0(a,b,c,d,e,f,g)$ 
if $n \equiv 3,5 \pmod{14}$ and
$C \cong C^1(a,b,c,d,e,f,g)$ 
if $n \equiv 10,12 \pmod{14}$ for some $(a,b,c,d,e,f,g)$.

\begin{lem}\label{lem:3II-0}
\begin{itemize}
\item[\rm (i)]
$C^0(a,b,c,d,e,f,g) \cong  C^0(a,b,d,c,e,g,f)$ \\
$\cong  C^0(a,c,b,d,f,e,g)
\cong C^0(a,c,d,b,f,g,e)  \cong  C^0(a,d,b,c,g,e,f)$ \\
$\cong C^0(a,d,c,b,g,f,e)$.
\item[\rm (ii)]
$C^1(a,b,c,d,e,f,g)  \cong  C^1(a,b,d,c,e,g,f)$.
\end{itemize}
\end{lem}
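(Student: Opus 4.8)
The plan is to mimic the proofs of Lemmas~\ref{lem:bc} and~\ref{lem:equiv}: every equivalence in the statement should be realized by permuting the three rows of the generator matrix (a row operation, which does not change the code) and then restoring the standard form by a suitable permutation of the columns (which yields an equivalent code).

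First I would record the bookkeeping. The seven parameters $a,b,c,d,e,f,g$ count, respectively, the columns of $M(a,b,c,d,e,f,g)$ equal to the seven nonzero vectors
\[
(1,1,1)^T,\ (1,0,0)^T,\ (0,1,0)^T,\ (0,0,1)^T,\ (0,1,1)^T,\ (1,0,1)^T,\ (1,1,0)^T
\]
of $\FF_2^3$. A permutation $\sigma$ of the three coordinates acts on these seven vectors, fixing $(1,1,1)^T$, permuting $(1,0,0)^T,(0,1,0)^T,(0,0,1)^T$ exactly as it permutes the coordinates, and permuting $(0,1,1)^T,(1,0,1)^T,(1,1,0)^T$ in the dual (complementary-support) manner. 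A direct check shows that, as $\sigma$ runs over the six coordinate permutations, the induced permutations of $(a,b,c,d,e,f,g)$ are precisely the six listed in part~(i); in particular the transposition of the second and third coordinates induces $(a,b,c,d,e,f,g)\mapsto(a,b,d,c,e,g,f)$.

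For part~(i) I would argue as follows. Let $\sigma$ be one of the six coordinate permutations and let $P_\sigma$ be the corresponding $3\times3$ permutation matrix. Left-multiplying the generator matrix $(I_3\mid M\mid M)$ of $C^0(a,b,c,d,e,f,g)$ by $P_\sigma$ is a row operation, hence does not change the code; it produces $(P_\sigma\mid M_\sigma\mid M_\sigma)$, where $M_\sigma=P_\sigma M$ is obtained by relabelling the column types of $M$ according to $\sigma$. Then I would apply a single column permutation that (a)~rearranges the first three columns to turn $P_\sigma$ back into $I_3$ and (b)~rearranges the columns inside each of the two $M_\sigma$ blocks in the same way, so that each becomes $M(a',b',c',d',e',f',g')$ with the $\sigma$-permuted parameters. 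Since the two blocks are permuted identically, the result is exactly $G^0(a',b',c',d',e',f',g')$, and a column permutation yields an equivalent code; hence $C^0(a,b,c,d,e,f,g)\cong C^0(a',b',c',d',e',f',g')$. Running over the six choices of $\sigma$ gives all six equivalences.

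For part~(ii) the generator matrix carries the extra final column $(0,1,1)^T$, so the same argument applies only to those $\sigma$ that fix $(0,1,1)^T$; the restoration of standard form then leaves this column in place. Among the coordinate permutations, exactly the identity and the transposition of the second and third coordinates fix $(0,1,1)^T$, and the latter induces $(a,b,c,d,e,f,g)\mapsto(a,b,d,c,e,g,f)$, which is the asserted equivalence. I expect the only delicate point to be the bookkeeping---matching each coordinate permutation with the correct parameter permutation and confirming that the appended column is preserved---while the underlying mechanism (row permutation followed by column restoration) is entirely routine.
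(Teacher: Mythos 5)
Your proposal is correct and matches the paper's proof, which likewise establishes the equivalences by permuting rows of the generator matrices and then permuting columns to restore the standard form; the paper merely states this in one sentence, while you have filled in the bookkeeping (the action of coordinate permutations on the seven column types and the stabilizer of the appended column $(0,1,1)^T$ in part (ii)). No gaps.
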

\begin{proof}
The result follows by considering permutations of rows and
columns of 
the generator matrices of 
$C^0(a,b,c,d,e,f,g)$ and $C^1(a,b,c,d,e,f,g)$.
\end{proof}

By the above lemma, 
we may assume without loss of generality that
\begin{equation}\label{eq:3II-1}
\begin{array}{rrl}
b \le c \le d &\text{ if }& \delta=0,\\
c \le d       &\text{ if }& \delta=1.
\end{array}
\end{equation}

\begin{lem}\label{lem:3II}
Let $S$ be the set of $(a,b,c,d,e,f,g)$
satisfying~\eqref{eq:abfg}--\eqref{eq:abcdefg} and
\eqref{eq:3II-1}.
\begin{itemize}
\item[\rm (i)] 
If $(n,\alpha)=(14t + 3,8t + 1)$ $(t \ge 1)$, then
$S=\{(t, t, t, t, t, t, t)\}$.
\item[\rm (ii)]  
If $(n,\alpha)=(14t + 5,8t + 2)$ $(t \ge 0)$, then
$S=\{(t + 1, t, t, t, t, t, t)\}$.
\item[\rm (iii)] 
If $(n,\alpha)=(14t + 10,8t + 5)$ $(t \ge 0)$, then
$S=\{(t + 1, t, t, t, t, t + 1, t + 1)\}$.
\item[\rm (iv)] 
If $(n,\alpha)=(14t + 12,8t + 6)$ $(t \ge 0)$, then
\[
S=\{(t + 1, t + 1, t, t, t, t + 1, t + 1)\}.
\]
\end{itemize}
\end{lem}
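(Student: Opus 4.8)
The plan is to treat all four parts uniformly. A candidate tuple $(a,b,c,d,e,f,g)$ lying in $S$ must simultaneously satisfy the box constraints supplied by Lemma~\ref{lem:abcdefg}, the linear relation~\eqref{eq:abcdefg}, the seven weight inequalities~\eqref{eq:abfg}--\eqref{eq:abcd}, and the normalization~\eqref{eq:3II-1}; I would show that these together leave exactly one tuple in each case. The guiding principle is that $\alpha_n$ is so close to the extremal value that the seven nonzero weights appearing in~\eqref{eq:3we} are forced to be nearly equal, which in turn forces all seven parameters into very short intervals.

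First I would record, for each part, the values of $\delta$ and $m$. Since $C^\delta(a,b,c,d,e,f,g)$ has length $2m+3+\delta$, parts (i) and (ii) have $\delta=0$ with $m=7t$ and $m=7t+1$, while parts (iii) and (iv) have $\delta=1$ with $m=7t+3$ and $m=7t+4$. Substituting these values together with $\alpha=\alpha_n$ into the definitions of $R_1$ and $R_2$, I would compute the two intervals explicitly; in every case they turn out to consist of at most two consecutive integers. For instance, in part (i) one gets $R_1=\{t\}$ and $R_2=\{t-1,t\}$, and in part (iii) one gets $R_1=\{t,t+1\}$ and $R_2=\{t\}$. By Lemma~\ref{lem:abcdefg} this confines each of the seven variables to a set of size one or two.

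Next I would impose~\eqref{eq:abcdefg}. Writing each variable as the lower endpoint of its interval plus a $0/1$ deviation, the relation~\eqref{eq:abcdefg} fixes the total number of variables sitting at the upper endpoint. In parts (i) and (iii) this count is forced to an extreme value: in part (i) the variables $a,e,f,g$ are already pinned to $t$ by the singleton $R_1$, and the sum then forces $b=c=d=t$; in part (iii) the variables $b,c,d,e$ are pinned to $t$ by the singleton $R_2$, and the sum forces $a=f=g=t+1$. Thus parts (i) and (iii) are settled with no further work, giving the tuples $(t,t,t,t,t,t,t)$ and $(t+1,t,t,t,t,t+1,t+1)$. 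In parts (ii) and (iv) the count instead leaves several candidate tuples differing only in which variables sit at the top of their two-element range.

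The main obstacle is the final elimination in parts (ii) and (iv). There I would rewrite each of~\eqref{eq:abfg}--\eqref{eq:abcd} as a lower bound on a four-term sum of the deviations, the constant parts cancelling against $\alpha_n$, and then discard every spurious candidate by exhibiting one violated inequality. The normalization~\eqref{eq:3II-1}, which for $\delta=0$ gives $b\le c\le d$ and for $\delta=1$ only $c\le d$, is applied first to fix where the low deviations sit and thereby cut the number of candidates; the survivors are then knocked out using~\eqref{eq:abcd} or one of~\eqref{eq:bcef}--\eqref{eq:cdfg}. Carrying this through, the only tuple surviving in part (ii) is $(t+1,t,t,t,t,t,t)$ and in part (iv) is $(t+1,t+1,t,t,t,t+1,t+1)$, as claimed. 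As is standard for the lemmas in this section, I would write out one representative case in full detail and remark that the remaining cases are entirely analogous.
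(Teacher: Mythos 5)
Your proposal is correct and follows essentially the same route as the paper's proof: compute $R_1$ and $R_2$ from Lemma~\ref{lem:abcdefg}, use the sum condition~\eqref{eq:abcdefg} to fix how many variables sit at the top of their two-element ranges (which settles (i) and (iii) immediately via the singleton interval), and eliminate the remaining spurious candidates in (ii) and (iv) by exhibiting a violated inequality, which is exactly how the paper treats its representative case (iv). One small correction to your bookkeeping: some surviving candidates violate only one of \eqref{eq:abfg}--\eqref{eq:adef} and none of \eqref{eq:abcd}, \eqref{eq:bcef}--\eqref{eq:cdfg} (for instance, in part (ii) the candidate $(a,b,c,d,e,f,g)=(t,t,t,t,t+1,t,t)$ satisfies the normalization and every condition except \eqref{eq:abfg}), so the final elimination cannot rely only on the inequalities you single out, but your stated sweep over all of \eqref{eq:abfg}--\eqref{eq:abcd} does cover this.
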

\begin{proof}
All cases are similar, and we only give the details
for (iv), which is the complicated case.

Suppose that $(n,\alpha)=(14t + 12,8t + 6)$ $(t \ge 0)$.
By Lemma~\ref{lem:abcdefg}, 
$R_1=R_2=\{t,t+1\}$.
From~\eqref{eq:abfg},
$4t+\frac{5}{2} \le a+b+f+g$.
Hence, we have
\[
|\{s \in \{a,b,f,g\} \mid s=t+1\}| \ge 3.
\]
From~\eqref{eq:abcdefg},
$a+b+c+d+e+f+g=7t+4$. 
Hence, we have
\[
|\{s \in \{a,b,c,d,e,f,g\} \mid s=t+1\}| =4.
\]
Therefore, we have 
\begin{align*}
(a,b,f,g) \in
\left\{
\begin{array}{l}
(t+1,t+1,t+1,t),(t+1,t+1,t,t+1), \\
(t+1,t,t+1,t+1),(t,t+1,t+1,t+1), \\
(t+1,t+1,t+1,t+1)
\end{array}
\right\}.
\end{align*}
Here, we remark that
\begin{equation}\label{eq:3II}
|\{s \in \{c,d,e\} \mid s=t+1\}|  \le 1. 
\end{equation}

\begin{itemize}
\item $(a,b,f,g)=(t+1,t+1,t+1,t)$:
From~\eqref{eq:aceg}, \eqref{eq:bdeg} and \eqref{eq:cdfg},
we have
\begin{align*}
2t+1 \le c+e,  
2t+\frac{1}{2} \le d+e \text{ and }  
2t+1 \le c+d,
\end{align*}
respectively.
This contradicts~\eqref{eq:3II}.

\item $(a,b,f,g)=(t+1,t+1,t,t+1)$:
From~\eqref{eq:adef}, \eqref{eq:bcef} and \eqref{eq:cdfg},
we have
\begin{align*}
2t+1 \le d+e,  
2t+\frac{1}{2} \le c+e \text{ and } 
2t+1 \le c+d, 
\end{align*}
respectively.
This contradicts~\eqref{eq:3II}.

\item $(a,b,f,g)=(t+1,t,t+1,t+1)$:
From~\eqref{eq:bcef}, \eqref{eq:bdeg} and \eqref{eq:abcd},
we have
\begin{align*}
2t+\frac{1}{2} \le c+e,  
2t+\frac{1}{2} \le d+e \text{ and } 
2t+\frac{1}{2} \le c+d, 
\end{align*}
respectively.
This contradicts~\eqref{eq:3II}.

\item $(a,b,f,g)=(t,t+1,t+1,t+1)$:
From~\eqref{eq:aceg}, \eqref{eq:adef} and \eqref{eq:abcd},
we have
\begin{align*}
2t+1 \le c+e,  
2t+1 \le d+e \text{ and } 
2t+\frac{1}{2} \le c+d, 
\end{align*}
respectively.
This contradicts~\eqref{eq:3II}.
\end{itemize}
The result follows.
\end{proof}

Therefore, we have the following theorem.

\begin{thm}\label{thm:dim3-1}
For $n \equiv 3,5 \pmod{7}$ and $n \ge 5$, 
there is a unique LCD $[n, 3,\alpha_n]$ code, up to equivalence.
\end{thm}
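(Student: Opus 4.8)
The plan is to establish the uniqueness asserted in Theorem~\ref{thm:dim3-1} by reducing the classification to the combinatorial data $(a,b,c,d,e,f,g)$ and showing that, for each residue class, this data is forced to a single value. First I would set up the reduction: for $n \equiv 3,5 \pmod 7$ with $n \ge 5$, a code $C$ realizing the optimal minimum weight $\alpha_n$ must satisfy $d(C^\perp) \ge 2$, which follows from Lemmas~\ref{lem:dual2}, \ref{lem:3-00} and \ref{lem:3-2} exactly as recorded in the paragraph preceding Lemma~\ref{lem:3II-0}. Consequently, by Propositions~\ref{prop:3cover} and~\ref{prop:3cover2}, every such $C$ is equivalent to some $C^0(a,b,c,d,e,f,g)$ (when $n \equiv 3,5 \pmod{14}$) or $C^1(a,b,c,d,e,f,g)$ (when $n \equiv 10,12 \pmod{14}$). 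This splits the four residue classes mod $7$ into the four cases treated in Lemma~\ref{lem:3II}.

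Next I would invoke the equivalences of Lemma~\ref{lem:3II-0}, which permit the normalization~\eqref{eq:3II-1} on the parameters without loss of generality. Under this normalization, the requirement that $C$ attain minimum weight $\alpha_n$ translates precisely into the inequalities~\eqref{eq:abfg}--\eqref{eq:abcdefg} (read off from the weight enumerator~\eqref{eq:3we}), together with~\eqref{eq:3II-1}. Lemma~\ref{lem:abcdefg} then confines each coordinate to a small interval $R_1$ or $R_2$, and Lemma~\ref{lem:3II} shows that in each of the four cases the admissible set $S$ of parameter tuples is a \emph{singleton}. Since a single normalized tuple determines $C^\delta(\cdot)$ up to the equivalences already quotiented out, this immediately yields that there is at most one code up to equivalence in each case.

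To finish, I would note that existence is automatic: the unique tuple in each $S$ does satisfy all the constraints, so the corresponding $C^\delta$ is an LCD $[n,3,\alpha_n]$ code, and the matching lower bound $d(n,3) \ge \alpha_n$ for these residue classes is what Theorem~\ref{thm:n3dmax} asserts (consistent with the fact that the opposite inequality~\eqref{eq:3new} fails only for the excluded residues). Thus the singleton conclusion of Lemma~\ref{lem:3II} gives both existence and uniqueness, proving the theorem.

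I expect the main obstacle to be entirely absorbed into Lemma~\ref{lem:3II}: the real work is verifying that the interval constraints plus the normalization genuinely collapse to one tuple, and in particular ruling out the several near-candidate tuples (as in the four-way case analysis of part~(iv), where one pushes the inequalities~\eqref{eq:aceg}--\eqref{eq:abcd} against the counting bound~\eqref{eq:3II} to derive contradictions). Since that lemma is already available to me, the proof of the theorem itself is short: it is purely the assembly of the reduction to $C^\delta$, the normalization via Lemma~\ref{lem:3II-0}, and the singleton outputs of Lemma~\ref{lem:3II}. The only point requiring care is bookkeeping the correspondence between residues mod $7$ and mod $14$ so that the correct value of $\delta$ is used in each case.
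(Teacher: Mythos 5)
Your proposal is correct and is essentially the paper's own argument: the paper states this theorem immediately after Lemma~\ref{lem:3II} (``Therefore, we have the following theorem''), the proof being exactly your assembly of the reduction to $C^{\delta}(a,b,c,d,e,f,g)$ via Lemmas~\ref{lem:dual2}, \ref{lem:3-00}, \ref{lem:3-2} and Propositions~\ref{prop:3cover}, \ref{prop:3cover2}, the normalization~\eqref{eq:3II-1} justified by Lemma~\ref{lem:3II-0}, and the singleton sets of Lemma~\ref{lem:3II}, with existence coming from the fact that the unique tuple in $S$ satisfies the defining constraints and hence yields a code with the required parameters. One caveat: keep your existence argument exactly in that self-contained form and drop the appeal to Theorem~\ref{thm:n3dmax}, since the paper proves that theorem partly \emph{from} this one (together with Lemmas~\ref{lem:3-00}, \ref{lem:3-2}, \ref{lem:3-3}, \ref{lem:3-4} and Theorem~\ref{thm:dim3-2}), so invoking it here would be circular.
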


From~\eqref{eq:3new}, we have 
$d(n,3) \le \alpha_n-1$
if $n\equiv 0,1,2,4,6 \pmod7$.
Now we construct an LCD code meeting the bound.
Suppose that $C$ is an LCD $[n,3,\alpha_n-1]$ code for 
$n \equiv 0,  2 \pmod {7}$ and $n \ge 7$.
By Lemmas~\ref{lem:dual2},
\ref{lem:3-00} and \ref{lem:3-2}, $d(C^\perp) \ge 2$.
Hence, by Propositions~\ref{prop:3cover} and~\ref{prop:3cover2},
$C \cong C^0(a,b,c,d,e,f,g)$ 
if $n \equiv 7,9 \pmod{14}$ and
$C \cong C^1(a,b,c,d,e,f,g)$ 
if $n \equiv 0,2 \pmod{14}$ for some $(a,b,c,d,e,f,g)$.

\begin{lem}\label{lem:3II-2}
Let $S$ be the set of $(a,b,c,d,e,f,g)$
satisfying~\eqref{eq:abfg}--\eqref{eq:abcdefg} and
\eqref{eq:3II-1}.
\begin{itemize}
\item[\rm (i)] 
If $(n,\alpha)=(14t,8t - 1)$ $(t \ge 1)$, then
\[
S =
\{
(t, t - 1, t, t, t - 1, t, t),
(t, t - 1, t - 1, t, t, t, t)
\}.
\]
\item[\rm (ii)] 
If $(n,\alpha)=(14t + 2,8t)$ $(t \ge 1)$, then
\[
S =
\{
(t, t, t, t, t - 1, t, t),
(t, t, t - 1, t, t, t, t)
\}.
\]
\item[\rm (iii)] 
If $(n,\alpha)=(14t + 7,8t + 3)$ $(t \ge 0)$, then
\[
S =
\left\{
\begin{array}{l}
(t, t, t, t, t + 1, t, t + 1),
(t, t, t, t, t + 1, t + 1, t),\\
(t, t, t, t, t, t + 1, t + 1)
\end{array}
\right\}.
\]
\item[\rm (iv)] 
If $(n,\alpha)=(14t + 9,8t + 4)$ $(t \ge 0)$, then
\[
S = 
\left\{
\begin{array}{l}
(t + 1, t, t, t, t + 1, t, t + 1),
(t + 1, t, t, t, t + 1, t + 1, t),\\
(t + 1, t, t, t, t, t + 1, t + 1)
\end{array}
\right\}.
\]
\end{itemize}
\end{lem}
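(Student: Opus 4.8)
The plan is to mirror the proof of Lemma~\ref{lem:3II}, running the four cases in parallel. The parity of $n$ fixes $\delta$: cases (i) and (ii) have $n \equiv 0,2 \pmod{14}$, so $\delta=1$ and $m=(n-4)/2$, whereas cases (iii) and (iv) have $n \equiv 7,9 \pmod{14}$, so $\delta=0$ and $m=(n-3)/2$. First I would substitute these values of $m$, together with $\alpha=\alpha_n-1$, into the definitions of $R_1$ and $R_2$ and invoke Lemma~\ref{lem:abcdefg} to read off the admissible value set of each coordinate. In case (i) every coordinate lies in $\{t-1,t\}$; in case (iii) one gets $a,e,f,g \in \{t,t+1\}$ and $b,c,d \in \{t-1,t\}$. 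The two remaining cases are analogous except that one range acquires a third value, namely $R_1=\{t-1,t,t+1\}$ in case (ii) and $R_2=\{t-1,t,t+1\}$ in case (iv).

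Next I would use the sum condition~\eqref{eq:abcdefg} to turn the problem into a counting statement: writing each coordinate as its central value $t$ plus a signed deviation, \eqref{eq:abcdefg} fixes the net deviation, forcing, for example, exactly two coordinates down to $t-1$ in case (i) and an excess of two raised over lowered coordinates in case (iii). The heart of the argument is then to locate these deviations. Each of the seven weight conditions \eqref{eq:abfg}--\eqref{eq:abcd} is a lower bound on the sum over one of the seven coordinate quadruples governing the nonzero weights in~\eqref{eq:3we}, and each such bound caps the number of deviating coordinates permitted inside its quadruple. I would translate the seven inequalities into these caps and intersect them.

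The main obstacle is exactly this combinatorial bookkeeping. For case (i), the conditions \eqref{eq:abfg}, \eqref{eq:aceg}, \eqref{eq:adef} and \eqref{eq:cdfg} each forbid their quadruple from containing both lowered coordinates, and checking the $\binom{7}{2}$ pairs shows that the only admissible pairs are $\{b,c\}$, $\{b,d\}$ and $\{b,e\}$; analogous (slightly larger) enumerations settle the other cases. Finally I would impose the normalization~\eqref{eq:3II-1}, valid by the symmetries in Lemma~\ref{lem:3II-0}: in case (i) it collapses the configuration $\{b,d\}$ onto $\{b,c\}$ (via the swap of $c,d$ and of $f,g$), leaving exactly the two tuples listed in $S$, and it similarly trims the surviving lists elsewhere. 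I expect cases (ii) and (iv) to demand the most care, since there the third value in $R_1$ or $R_2$ permits mixed deviation patterns, in which a coordinate raised to $t+1$ is compensated by extra coordinates lowered to $t-1$; ruling these out requires the finer caps coming from \eqref{eq:bcef}, \eqref{eq:bdeg} and \eqref{eq:abcd} rather than the coarse ``at most one'' bounds that suffice in case (i).
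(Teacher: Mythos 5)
Your proposal is correct and takes essentially the same route as the paper: the paper's proof likewise combines Lemma~\ref{lem:abcdefg} (giving the ranges $R_1,R_2$), the sum condition~\eqref{eq:abcdefg} (fixing the number of deviating coordinates), the seven inequalities \eqref{eq:abfg}--\eqref{eq:abcd} (locating them), and the normalization~\eqref{eq:3II-1}, treating case (i) in detail and declaring the remaining cases similar. The only difference is organizational---the paper eliminates coordinates sequentially (showing $a=t$, then $g=t$, $f=t$, $d=t$) instead of enumerating admissible deviation pairs globally---which does not change the substance of the argument.
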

\begin{proof}
All cases are similar, and we only give the details for (i).

Suppose that $(n,\alpha)=(14t,8t -1)$ $(t \ge 1)$.
By Lemma~\ref{lem:abcdefg}, 
$R_1=R_2=\{t-1,t\}$.
From~\eqref{eq:abcdefg},
$a+b+c+d+e+f+g=7t-2$. 
Hence, we have
\begin{equation}\label{eq:3II-2-1}
|\{s \in \{a,b,c,d,e,f,g\} \mid s=t-1\}| =2.
\end{equation}
From~\eqref{eq:abfg}, \eqref{eq:aceg} and \eqref{eq:adef},
we have
\begin{equation}\label{eq:3II-2-2}
\begin{array}{l}
4t-1 \le a+b+f+g, \\
4t-\frac{3}{2} \le a+c+e+g \text{ and } \\
4t-1 \le a+d+e+f,
\end{array}
\end{equation}
respectively.

Now suppose that $a=t-1$.
From~\eqref{eq:3II-2-2}, we have
$
b=c=d=e=f=g=t$.
Since this contradicts~\eqref{eq:3II-2-1}, we have $a=t$.
Suppose that $g=t-1$.
From~\eqref{eq:3II-2-2}, we have
$b=c=d=e=f=t$.
Since this contradicts~\eqref{eq:3II-2-1}, we have $g=t$.
From~\eqref{eq:cdfg}, 
we have
\begin{equation}\label{eq:3II-2-3}
4t-\frac{3}{2} \le c+d+f+g.
\end{equation}
Suppose that $f=t-1$.
From~\eqref{eq:3II-2-2} and \eqref{eq:3II-2-3}, we have
$b=c=d=e=t$.
Since this contradicts~\eqref{eq:3II-2-1}, we have $f=t$.
Suppose that $d=t-1$.
From~\eqref{eq:3II-2-3}, we have
$c=t$,
which contradicts~\eqref{eq:3II-1}.
Therefore, we have
\begin{align*}
(b,c,e) \in \{(t-1,t-1,t), (t-1,t,t-1)\}.
\end{align*}
The result follows.
\end{proof}

We denote the code with generator matrix of the form
$M(a,b,c,d,e,f,g)$ in~\eqref{eq:Mabcdefg}
by $D(a,b,c,d,e,f,g)$.
It is trivial that 
$C^0(a,b,c,d,e,f,g)  \cong D(2a,2b+1,2c+1,2d+1,2e,2f,2g)$ and
$C^1(a,b,c,d,e,f,g)  \cong D(2a,2b+1,2c+1,2d+1,2e+1,2f,2g)$.

\begin{lem}\label{lem:3II-3}
\begin{itemize}
\item[\rm (i)]
For $t \ge 1$,
$D(2t,2t-1,2t+1,2t+1,2t-1,2t,2t) \cong D(2t,2t-1,2t-1,2t+1,2t+1,2t,2t)$.
\item[\rm (ii)]
For $t \ge 1$,
$D(2t,2t+1,2t-1,2t+1,2t+1,2t,2t) \cong
D(2t,2t+1,2t+1,2t+1,2t-1,2t,2t)$.
\end{itemize}
\end{lem}
\begin{proof}
Let $r_i$ be the $i$-th row of $M(a,b,c,d,e,f,g)$.
Consider the following matrices:
\[
\left(
\begin{array}{ccccccc}
r_1\\
r_3\\
r_2+r_3
\end{array}
\right) \text{ and } 
\left(
\begin{array}{ccccccc}
r_1\\
r_2\\
r_2+r_3
\end{array}
\right)
\]
for (i) and (ii), respectively.
The result follows.
\end{proof}

\begin{thm}\label{thm:dim3-2}
For $n \equiv 0,2 \pmod{7}$ and $n \ge 7$, 
there is a unique LCD $[n, 3,\alpha_n-1]$ code, up to equivalence.
\end{thm}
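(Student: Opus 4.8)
The plan is to prove existence and uniqueness together, following the template of Theorem~\ref{thm:dim3-1} but adapting to the fact that the parameter set $S$ delivered by Lemma~\ref{lem:3II-2} is now a two- or three-element set rather than a singleton. For existence, \eqref{eq:3new} already gives $d(n,3)\le\alpha_n-1$ for $n\equiv 0,2\pmod 7$. Since Lemma~\ref{lem:3II-2} produces a nonempty $S$ in each of the four residue classes modulo $14$, I would fix any tuple $(a,b,c,d,e,f,g)\in S$ and form $C^\delta(a,b,c,d,e,f,g)$, taking $\delta=1$ for $n\equiv 0,2\pmod{14}$ and $\delta=0$ for $n\equiv 7,9\pmod{14}$. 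This code is LCD with $d(C^\perp)\ge 2$ by Propositions~\ref{prop:3cover} and~\ref{prop:3cover2}; because the tuple satisfies \eqref{eq:abfg}--\eqref{eq:abcd} with $\alpha=\alpha_n-1$, every nonzero exponent in the weight enumerator \eqref{eq:3we} is at least $\alpha_n-1$, so the minimum weight is $\ge\alpha_n-1$ and hence equals $\alpha_n-1$ by the upper bound. This simultaneously yields existence and $d(n,3)=\alpha_n-1$.

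For uniqueness, let $C$ be an arbitrary LCD $[n,3,\alpha_n-1]$ code. The discussion preceding the theorem (via Lemmas~\ref{lem:dual2}, \ref{lem:3-00}, \ref{lem:3-2} and Propositions~\ref{prop:3cover}, \ref{prop:3cover2}) already reduces $C$ to $C^\delta(a,b,c,d,e,f,g)$ with $\delta$ determined by $n\bmod 14$, and Lemma~\ref{lem:3II-2} lists the admissible tuples after the normalization \eqref{eq:3II-1}. What remains is to show that, in each residue class, all tuples of $S$ give the same code up to equivalence. I would organize this by the size of $S$.

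When $|S|=2$ (the cases $n\equiv 0,2\pmod{14}$, where $\delta=1$), the two tuples are not related by the only surviving permutation symmetry $c\leftrightarrow d$, $f\leftrightarrow g$ recorded in Lemma~\ref{lem:3II-0}~(ii), so a nonpermutation row operation is needed. Here I would pass through the dictionary $C^1(a,b,c,d,e,f,g)\cong D(2a,2b+1,2c+1,2d+1,2e+1,2f,2g)$ and check that the two $D$-codes coming from $S$ are exactly the two sides of Lemma~\ref{lem:3II-3}~(i) for $n\equiv 0\pmod{14}$ and of Lemma~\ref{lem:3II-3}~(ii) for $n\equiv 2\pmod{14}$; those are declared equivalent there. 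When $|S|=3$ (the cases $n\equiv 7,9\pmod{14}$, where $\delta=0$), all three tuples share the same $(b,c,d)=(t,t,t)$ and differ only by an arrangement of the multiset $\{t,t+1,t+1\}$ across $(e,f,g)$. Because $(b,c,d)$ is constant, its full symmetric-group stabilizer continues to act, and since a row permutation moves $(b,c,d)$ and $(e,f,g)$ by the same permutation, every such arrangement is reachable; these are precisely the six equivalences of Lemma~\ref{lem:3II-0}~(i), so the three tuples collapse to a single class.

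The main obstacle I anticipate is bookkeeping rather than conceptual difficulty: one must verify that the specific tuples of Lemma~\ref{lem:3II-2} map, under the $C^\delta\cong D$ dictionary, exactly onto the two members named in Lemma~\ref{lem:3II-3} in the two-element cases, and that in the three-element cases the permutation of $(e,f,g)$ linking each pair of tuples is genuinely one of the six entries of Lemma~\ref{lem:3II-0}~(i). These are finite explicit checks, so once the split by $n\bmod 14$ is in place, no real difficulty should remain.
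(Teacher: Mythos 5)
Your proposal is correct and takes essentially the same route as the paper, whose entire proof is the single line ``The result follows from Lemmas~\ref{lem:3II-0}, \ref{lem:3II-2} and~\ref{lem:3II-3}'': you spell out precisely how those lemmas combine, namely that Lemma~\ref{lem:3II-0}~(i) collapses the three-element sets $S$ for $n\equiv 7,9\pmod{14}$ while Lemma~\ref{lem:3II-3}, read through the $C^1\cong D$ dictionary, identifies the two tuples in each two-element set for $n\equiv 0,2\pmod{14}$, with existence coming from~\eqref{eq:3new} and the weight enumerator~\eqref{eq:3we}. One minor citation slip: the LCD property of the constructed codes $C^\delta(a,b,c,d,e,f,g)$ comes from Proposition~\ref{prop:k-cover} and the proposition stating that $C'(\YY)$ is LCD, not from Propositions~\ref{prop:3cover} and~\ref{prop:3cover2} (those are the converse reductions); this does not affect the validity of the argument.
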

\begin{proof}
The result follows from 
Lemmas~\ref{lem:3II-0},
\ref{lem:3II-2} and~\ref{lem:3II-3}.
\end{proof}

\begin{lem}\label{lem:3-4}
There is an LCD $[n,3,\alpha_n-1]$ code 
for $n \equiv 1,4,6 \pmod {7}$ and $n \ge 4$. 
\end{lem}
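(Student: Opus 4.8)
```latex
The goal is to establish the existence of LCD $[n,3,\alpha_n-1]$ codes
for $n \equiv 1,4,6 \pmod 7$ and $n \ge 4$.
The plan is to exhibit explicit codes meeting this bound, exploiting the
$3$-cover machinery developed in Section~\ref{Sec:cover} together with the
parametrized families $C^\delta(a,b,c,d,e,f,g)$ whose weight enumerators are
given by~\eqref{eq:3we}. The strategy parallels the uniqueness proofs of
Theorems~\ref{thm:dim3-1} and~\ref{thm:dim3-2}: rather than classifying, here
I only need a single $(a,b,c,d,e,f,g)$ for each residue class that yields an
LCD code of the desired minimum weight, so the task reduces to a constructive
search over the integer solutions of~\eqref{eq:abfg}--\eqref{eq:abcdefg}.

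First I would split $n \equiv 1,4,6 \pmod 7$ into the corresponding residue
classes modulo $14$, so as to decide whether to use the $\delta=0$ family
$C^0(\cdots)$ (for odd $n$, i.e.\ $n \equiv 1,11,13 \pmod{14}$) or the
$\delta=1$ family $C^1(\cdots)$ (for even $n$, i.e.\ $n \equiv 4,6,8 \pmod{14}$),
in accordance with Propositions~\ref{prop:3cover} and~\ref{prop:3cover2}.
For each class I would propose a candidate tuple of the form
$(t,t,t,t,t,t,t)$ perturbed by small constants, guided by the pattern already
visible in Lemmas~\ref{lem:3II} and~\ref{lem:3II-2}, where the optimal tuples
cluster around the balanced choice. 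Writing $n = 14t+s$ and
$\alpha_n - 1$ explicitly in terms of $t$, I would substitute the candidate
tuple into the seven exponents appearing in~\eqref{eq:3we} and verify that the
smallest one equals $\alpha_n-1$; by Proposition~\ref{prop:3cover}
(respectively~\ref{prop:3cover2}), the resulting code $C^\delta$ is automatically
LCD with $d(C^\delta)^\perp \ge 2$, so only the minimum-weight check remains.

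The main obstacle I anticipate is bookkeeping rather than conceptual: each of
the three residues modulo $7$ unfolds into two residues modulo $14$, giving six
subcases, and within each subcase the correct perturbation of the balanced
tuple must be chosen so that \emph{every} one of the seven exponents
in~\eqref{eq:3we} is at least $\alpha_n-1$ while at least one equals it exactly.
Because the seven linear forms $a+b+f+g$, $a+c+e+g$, $\ldots$, $a+b+c+d$ each
omit three of the variables, raising one coordinate to lower the minimum risks
violating a different inequality, so the candidate tuples must be pinned down
carefully; here the constraints in~\eqref{eq:abfg}--\eqref{eq:abcd} are exactly
the tool that certifies the bound is met. I would present the six chosen tuples
in a short table indexed by $n \bmod 14$, and for each one simply record the
multiset of seven exponents, observing in each case that its minimum is
$\alpha_n-1$. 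Since existence of even one such tuple suffices, this completes
the proof, and in fact combined with~\eqref{eq:3new} and
Lemmas~\ref{lem:3-00}, \ref{lem:3-2}, \ref{lem:3-3} it pins down $d(n,3)$
exactly for these residues, yielding Theorem~\ref{thm:n3dmax}.
```
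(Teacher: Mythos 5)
Your strategy coincides with the paper's own proof: for each of the six residue classes modulo $14$ arising from $n \equiv 1,4,6 \pmod 7$, exhibit one explicit tuple $(a,b,c,d,e,f,g)$ (a balanced tuple perturbed by small constants), and read the minimum weight of $C^\delta(a,b,c,d,e,f,g)$ off the weight enumerator~\eqref{eq:3we}. The paper does exactly this with the six codes $C^1(t+1,t,t,t,t,t,t)$, $C^1(t,t,t,t,t,t+1,t+1)$, $C^0(t+1,t,t,t,t+1,t+1,t+1)$, $C^0(t+1,t+1,t,t,t+1,t+1,t+1)$, $C^0(t+1,t+1,t,t+1,t+1,t+1,t+1)$ and $C^1(t+1,t+1,t+1,t+1,t+1,t+1,t+1)$, of lengths $14t+6$, $14t+8$, $14t+11$, $14t+13$, $14t+15$ and $14t+18$ for $t \ge 0$, recording the resulting weight enumerators in Table~\ref{Tab:W}; your parity assignment ($C^0$ for $n \equiv 1,11,13 \pmod{14}$, $C^1$ for $n \equiv 4,6,8 \pmod{14}$) agrees with this, and such tuples do exist, so the plan is realizable exactly as you describe.

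There are, however, two points to repair. The genuine gap is the case $n=4$: it belongs to the class $n \equiv 4 \pmod 7$ and is included in the statement, but your families cannot produce it. The cover construction requires $m \ge 1$, and in the class $n \equiv 4 \pmod{14}$ the constraint $2m+4 \equiv 4 \pmod{14}$ forces $m \equiv 0 \pmod 7$, so the smallest constructible length there is $18$; hence $n=4$ must be handled separately, as the paper does by quoting the known value from~\cite[Table~1]{bound}. (This is easy: $\alpha_4 - 1 = 1$, and the code with generator matrix $\left(\begin{array}{cc} I_3 & \0_3^T \end{array}\right)$ satisfies $GG^T=I_3$, so it is an LCD $[4,3,1]$ code.) The second, minor, point is a citation slip: Propositions~\ref{prop:3cover} and~\ref{prop:3cover2} go in the wrong direction for your purposes, since they assert that every LCD code with dual distance at least $2$ arises from a $3$-cover; the fact that the constructed codes $C^\delta(a,b,c,d,e,f,g)$ are automatically LCD is Proposition~\ref{prop:k-cover} together with the unnamed proposition stating that $C'(\YY)$ is LCD. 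With these two fixes, and with the six tuples written down explicitly, your proposal becomes the paper's proof.
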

\begin{proof}
There is an LCD $[4,3,2]$ code (see~\cite[Table~1]{bound}).
Suppose that $n \ge 6$.
Consider the following codes:
\begin{align*}
&C^1(t+1,t,t,t,t,t,t),  \\
&C^1(t,t,t,t,t,t+1,t+1),  \\
&C^0(t+1,t,t,t,t+1,t+1,t+1), \\
&C^0(t+1,t+1,t,t,t+1,t+1,t+1), \\
&C^0(t+1,t+1,t,t+1,t+1,t+1,t+1) \text{ and } \\
&C^1(t+1,t+1,t+1,t+1,t+1,t+1,t+1),
\end{align*}
for $t \ge 0$.
We denote these codes by $C_i$ $(i=1,2,\ldots,6)$, respectively.
The codes $C_i$ have lengths 
$14t+6$, $14t+8$,
$14t+11$, $14t+13$, 
$14t+15$ and $14t+18$, 
respectively.
The weight enumerators $W_i$ of $C_i$ $(i=1,2,\ldots,6)$
are obtained by~\eqref{eq:3we}, where $W_i$ are listed
in Table~\ref{Tab:W}.
The result follows.
\end{proof}

\begin{rem}
For the parameters
$[4,3,1]$,
$[6,3,2]$ and 
$[8,3,3]$,
a number of inequivalent LCD codes are known
(see Table~\ref{Tab:C}).
\end{rem}

\begin{table}[thb]
\caption{$W_{i}$ $(i=1,2,\ldots,6)$}
\label{Tab:W}
\begin{center}
{\footnotesize
\begin{tabular}{c|l|c|l}
\noalign{\hrule height0.8pt}
$i$ & \multicolumn{1}{c|}{$W_{i}$}&$i$ & \multicolumn{1}{c}{$W_{i}$}\\
\hline
 1&$1+y^{8t + 2}+3 y^{8t + 3}+2 y^{8t + 4}+y^{8t + 5}$&
 4&$1+y^{8t + 6}+3 y^{8t + 7}+2 y^{8t + 8}+y^{8t + 9}$\\
 2&$1+y^{8t + 3}+2 y^{8t + 4}+3 y^{8t + 5}+y^{8t + 6}$&
 5&$1+y^{8t + 7}+2 y^{8t + 8}+3 y^{8t + 9}+y^{8t + 10}$\\
 3&$1+y^{8t + 5}+3 y^{8t + 6}+3 y^{8t + 7}$&
 6&$1+y^{8t + 9}+3 y^{8t + 10}+3 y^{8t + 11}$\\
\noalign{\hrule height0.8pt}
\end{tabular}
}
\end{center}
\end{table}

Lemmas~\ref{lem:3-00}, \ref{lem:3-2}, \ref{lem:3-3}, \ref{lem:3-4} and
Theorems~\ref{thm:dim3-1}, \ref{thm:dim3-2}
complete the proof of
Theorem~\ref{thm:n3dmax}.

\section{Classification of LCD codes for small parameters}
\label{Sec:C}

In this section, we give a complete classification of
LCD $[n,k]$ codes having the minimum weight $d(n,k)$ 
for $2 \le k \le n-1 \le 15$.

We describe how LCD $[n,k]$ codes having the
minimum weight $d(n,k)$ 
were classified.
Let $d_{\text{all}}(n,k)$ denote the largest minimum weight
among all (unrestricted) $[n,k]$ codes.
The values $d_{\text{all}}(n,k)$ can be found in~\cite{G}.
For a fixed pair $(n,k)$, we found all inequivalent
$[n,k]$ codes by one of the following methods.
If there is no LCD $[n,k,d_{\text{all}}(n,k)]$ code, then
we consider the case $d_{\text{all}}(n,k)-1$.

Let $C$ be an $[n,k,d]$ code with parity-check matrix $H$.
Let $D$ be a code with parity-check matrix obtained from $H$
by deleting a column.
The code $D$ is an $[n-1,k-1,d']$ code with $d' \ge d$.
By considering the inverse operation, 
all $[n,k,d]$ codes are obtained from $[n-1,k-1,d']$ codes with $d' \ge d$.
Starting from $[n,1,d']$ codes with $d' \ge d$, 
all $[n+t,1+t,d]$ codes are found for a given $t \ge 1$.
This was done 
by adding
one column at a time, and complete equivalence tests
are carried out for each new column added. 
It is obvious that all codes, which must be checked to achieve
a complete classification, can be obtained.

For some parameters, we employ the following
method, due to the computational complexity.
Every $[n,k,d]$ code is equivalent to a code with generator 
matrix of the form 
$\left(
\begin{array}{cc}
I_k & A
\end{array}
\right)
$,
where $A$ is a $k \times (n-k)$ matrix.
The set of matrices $A$ was constructed, row by row.
Permuting the rows and columns of $A$ gives rise to different 
generator matrices which generate equivalent codes.
Here, we consider a natural (lexicographical) order $<$ on 
the set of the vectors of length $n-k$.
Let $r_i$ be the $i$-th row of $A$.
We consider only matrices $A$, satisfying
the condition $r_1 < r_2 < \cdots < r_k$ and $\wt(r_i) \ge d-1$
if $d \ge 3$ and 
the condition $r_1 \le r_2 \le \cdots \le r_k$ and $\wt(r_i) \ge d-1$
if $d \le 2$.
It is obvious that all codes, which must be checked to achieve
a complete classification, can be obtained.

For $2 \le k \le n-1 \le 15$, 
the numbers $N(n,k,d(n,k))$ of the
inequivalent LCD $[n,k,d(n,k)]$
codes are listed in Table~\ref{Tab:C}, along with 
the values $d(n,k)$.
All generator matrices of the codes in the table
can be obtained electronically from
\url{http://www.math.is.tohoku.ac.jp/~mharada/LCD/}.

\begin{table}[thb]
\caption{$(d(n,k),N(n,k,d(n,k)))$}
\label{Tab:C}
\begin{center}
{\footnotesize
\begin{tabular}{c|cccccccccccccccccccccc}
\noalign{\hrule height0.8pt}
$n\backslash k$  
& 2& 3& 4& 5& 6& 7& 8\\
\hline
 3&$(2,1)$\\
 4&$(2,2)$&$(1,2)$\\
 5&$(2,3)$&$(2,1)$&$(2,1)$\\
 6&$(3,2)$&$(2,3)$&$(2,4)$&$(1,3)$\\
 7&$(4,1)$&$(3,1)$&$(2,9)$&$(2,2)$&$(2,1)$\\
 8&$(5,1)$&$(3,3)$&$(3,1)$&$(2,9)$&$(2,6)$&$(1,4)$\\
 9&$(6,1)$&$(4,1)$&$(4,1)$&$(3,2)$&$(2,23)$&$(2,3)$&$(2,1)$\\
10&$(6,2)$&$(5,1)$&$(4,5)$&$(3,11)$&$(3,2)$&$(2,23)$&$(2,9)$\\
11&$(6,4)$&$(5,6)$&$(4,20)$&$(4,4)$&$(4,1)$&$(3,1)$&$(2,51)$\\
12&$(7,2)$&$(6,1)$&$(5,6)$&$(4,37)$&$(4,11)$&$(3,22)$&$(2,396)$\\
13&$(8,1)$&$(6,6)$&$(6,2)$&$(5,5)$&$(4,146)$&$(4,4)$&$(3,27)$\\
14&$(9,1)$&$(7,1)$&$(6,16)$&$(5,101)$&$(5,4)$&$(4,301)$&$(4,8)$\\
15&$(10,1)$&$(7,8)$&$(6,89)$&$(6,10)$&$(6,2)$&$(5,1)$&$(4,985)$\\
16&$(10,2)$&$(8,1)$&$(7,7)$&$(6,283)$&$(6,60)$&$(5,1596)$&$(5,1)$\\
\hline
$n\backslash k$  
& 9&10&11&12&13&14&15\\
\hline
10&$(1,5)$\\
11&$(2,4)$&$(2,1)$\\
12&$(2,51)$&$(2,12)$&$(1,6)$\\
13&$(2,619)$&$(2,103)$&$(2,5)$&$(2,1)$\\
14&$(3,31)$&$(2,1370)$&$(2,103)$&$(2,16)$&$(1,7)$\\
15&$(4,2)$&$(3,34)$&$(2,2143)$&$(2,196)$&$(2,7)$&$(2,1)$\\
16&$(4,1772)$&$(4,7)$&$(3,34)$&$(2,4389)$&$(2,196)$&$(2,20)$&$(1,8)$\\
\noalign{\hrule height0.8pt}
\end{tabular}
}
\end{center}
\end{table}

%

We continue a classification of LCD codes with parameters
$[2m+3,2m,2]$ and $[2m+4,2m+1,2]$.
In Proposition~\ref{prop:3cover}, 
for an LCD $[2m+3,2m,2]$ code $C$, 
there is a  $3$-cover $(Y_1,Y_2,Y_3)$
such that $C^\perp  \cong C((Y_1,Y_2,Y_3))$.
In addition, by Proposition~\ref{prop:disordered}, 
when we consider codes $C(\YY)$ constructed from all $k$-covers $\YY$,
which must be checked to achieve a complete classification, 
it is sufficient to consider
only disordered $k$-covers of unlabelled $m$-sets.
According to~\cite{Clarke},
let $\Tdu(m,k)$ denote
the number of disordered $k$-covers of an unlabelled $m$-set.
The formula $\Tdu(m,k)$ is given in~\cite[Theorem~2]{Clarke}.
For $m \le 7$ and $k \le 8$, $\Tdu(m,k)$
is numerically determined in~\cite[Table~1]{Clarke} 
(see also A005783 in~\cite{OEIS}).
Our computer search shows the following:

\begin{prop}
If $1 \le m \le 11$, then
\[
N(2m+3,2m,2)= 
N(2m+4,2m+1,2)= 
\Tdu(m,3).
\]
\end{prop}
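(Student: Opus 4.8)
The plan is to turn both counts into a single orbit-counting problem on $3$-covers, using duality together with the constructions of Section~\ref{Sec:cover}. First I would invoke Proposition~\ref{prop:Massey}: a binary code is LCD if and only if its dual is, and since binary equivalence is permutation equivalence the map $C\mapsto C^\perp$ preserves it. Thus $\perp$ gives a bijection between the equivalence classes of LCD $[2m+3,2m,2]$ codes and those of LCD $[2m+3,3]$ codes $D$ with $d(D^\perp)=2$, and likewise in length $2m+4$. By Propositions~\ref{prop:3cover} and~\ref{prop:k-cover} the former dual classes are exactly the classes of the codes $C(\YY)$ as $\YY$ runs over the $3$-covers of an $m$-set (note that Proposition~\ref{prop:3cover} forces every such $D$ to have $d(D^\perp)=2$), and by Propositions~\ref{prop:3cover2} and~\ref{prop:k-cover} the length-$(2m+4)$ classes are exactly the classes of the $C'(\YY)$. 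Encoding a $3$-cover as a multiset of $m$ nonzero columns of $\FF_2^3$ (the $j$-th column records which of $Y_1,Y_2,Y_3$ contains the $j$-th point), Proposition~\ref{prop:disordered} shows this multiset, read up to the coordinate action of $S_3$, is precisely a disordered $3$-cover of an unlabelled $m$-set, of which there are $\Tdu(m,3)$. This already gives $N(2m+3,2m,2)\le \Tdu(m,3)$ and $N(2m+4,2m+1,2)\le \Tdu(m,3)$.

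For the reverse inequalities I would use the standard criterion that two full-rank binary generator matrices generate equivalent codes exactly when their column multisets agree up to reordering and an element of $\mathrm{GL}_3(\FF_2)$. The matrix for $C(\YY)$ is $(I_3\mid N\mid N)$, so each basis vector $e_i$ has odd column-multiplicity while every other nonzero vector has even multiplicity. Hence the odd-multiplicity set is forced to be $\{e_1,e_2,e_3\}$, any admissible $\mathrm{GL}_3(\FF_2)$-element must permute it and is therefore a permutation matrix, and the only identifications are those already recorded in Proposition~\ref{prop:disordered}. So distinct disordered covers give inequivalent $C(\YY)$, and $N(2m+3,2m,2)=\Tdu(m,3)$.

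The delicate case, and the main obstacle, is $C'(\YY)$, whose generator matrix carries the extra column $e_2+e_3$. Now the odd-multiplicity set is the four-element set $\{e_1,e_2,e_3,e_2+e_3\}$, whose setwise stabilizer $H$ in $\mathrm{GL}_3(\FF_2)$ has order $6$ but is \emph{not} the coordinate group: it fixes $e_1$ and permutes $\{e_2,e_3,e_2+e_3\}$. Consequently the equivalence classes of the $C'(\YY)$ are the $H$-orbits, not the $S_3$-orbits, of size-$m$ multisets of nonzero vectors, and Proposition~\ref{prop:disordered} can no longer be quoted directly. The point I would prove is that $H$ and the coordinate $S_3$ induce the \emph{same} cycle index on the seven nonzero vectors of $\FF_2^3$: both fix one vector and split the remaining six into two blocks of three on which they act as the natural $S_3$, so an element of order $3$ gives cycle type $1^1 3^2$ and an element of order $2$ gives $1^3 2^2$ in either realization. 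By Burnside's lemma (equivalently P\'olya counting) the number of orbits on size-$m$ multisets then coincides for the two groups, whence $N(2m+4,2m+1,2)=\Tdu(m,3)$.

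I would remark that this argument in fact settles the identity for every $m\ge 1$, so the stated restriction $1\le m\le 11$ is not forced by the method; it is precisely the range in which the independent classification of LCD codes from Section~\ref{Sec:C} is available to cross-check the count. The one genuinely nonformal step is the $C'$ case, where the broken symmetry of the extra column must be repaired by the cycle-index comparison rather than by a direct appeal to Proposition~\ref{prop:disordered}.
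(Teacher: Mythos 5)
You are correct, and your argument is genuinely different from what the paper does: the paper offers no theoretical proof of this proposition at all. Your reduction---duality via Proposition~\ref{prop:Massey}, the cover constructions of Propositions~\ref{prop:3cover}, \ref{prop:3cover2} and~\ref{prop:k-cover}, and the passage to disordered covers via Proposition~\ref{prop:disordered}---is precisely the setup the paper sketches in the text preceding the proposition, but at that point the paper simply reports a computer search: the codes arising from disordered $3$-covers were classified by machine for $1\le m\le 11$ and the resulting counts observed to equal $\Tdu(m,3)$. That computation is the sole source of the hypothesis $m\le 11$, exactly as you suspect. What your proof supplies is the counting step the machine check replaces: for $C(\YY)$, the parity-of-column-multiplicity argument forcing any equivalence to be realized by a permutation matrix, so that classes biject with disordered covers; and for $C'(\YY)$, the identification of the stabilizer $H$ of $\{e_1,e_2,e_3,e_2+e_3\}$ together with the cycle-index comparison and P\'olya counting. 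The subtlety you isolate in the $C'$ case is real, not an artifact: already for $m=1$ the covers $(X,\emptyset,\emptyset)$ and $(\emptyset,X,\emptyset)$ lie in one $S_3$-orbit, yet $C'((X,\emptyset,\emptyset))$ and $C'((\emptyset,X,\emptyset))$ are inequivalent (the latter has a weight-one codeword, the former does not), so the equivalence classes really are $H$-orbits rather than $S_3$-orbits and only the totals agree. (One harmless imprecision: Proposition~\ref{prop:k-cover} literally asserts $d(C(\YY)^\perp)=2$ only for $C(\YY)$; for $C'(\YY)$ you need the same two-line observation---no zero column, two equal columns---which your column description already contains.) In exchange for being longer, your argument proves the identity for every $m\ge 1$, removes the dependence on computation, and explains why the coincidence holds; the paper's search, on the other hand, is consistent with its overall classification machinery and produces the explicit generator matrices, not merely the counts.
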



\bigskip
\noindent
{\bf Acknowledgment.}
This work was supported by JSPS KAKENHI Grant Number 15H03633.
The authors would like to thank Makoto Araya
for his useful discussions.
The authors would also like to thank Yuta Watanabe and
the anonymous referees for helpful comments.




\begin{thebibliography}{99}

\bibitem{Magma}W. Bosma, J. Cannon and C. Playoust,
The Magma algebra system I: The user language,
{\sl J. Symbolic Comput.}
{\bf 24} (1997), 235--265.

\bibitem{CG}
C. Carlet and S. Guilley,
Complementary dual codes for counter-measures to side-channel attacks,
In: E.R. Pinto et al.\ (eds.), 
Coding Theory and Applications, CIM Series in
Mathematical Sciences, vol.\ 3, pp.~97--105, Springer, 2014.

\bibitem{CMTQ}
C. Carlet, S. Mesnager, C. Tang and Y. Qi,
Euclidean and Hermitian LCD MDS codes,
{\sl Des.\ Codes Cryptogr.},
(to appear),
\url{https://doi.org/10.1007/s10623-018-0463-8},
arXiv:1702.08033.


\bibitem{CMTQ2}
C. Carlet, S. Mesnager, C. Tang, Y. Qi and R. Pellikaan,
Linear codes over $\FF_q$ are equivalent to LCD codes for $q >3$,
{\sl IEEE\ Trans.\ Inform.\ Theory}
{\bf 64}  (2018),  3010--3017.

\bibitem{Clarke}R.J. Clarke, 
Covering a set by subsets,
{\sl Discrete Math.}
{\bf 81} (1990), 147--152.


\bibitem{DKOSS}
S.T. Dougherty, J.-L. Kim, B. Ozkaya, L. Sok and P. Sol\'e,
The combinatorics of LCD codes: linear programming bound and orthogonal
matrices,
{\sl Int.\ J. Inf.\ Coding Theory}
{\bf 4}  (2017),  116--128.


\bibitem{bound}
L. Galvez, J.-L. Kim, N. Lee, Y.G. Roe and B.-S. Won,
Some bounds on binary LCD codes,
{\sl Cryptogr.\ Commun.}
{\bf 10} (2018), 719--728.

\bibitem{G} M. Grassl,
Code tables: Bounds on the parameters of various types of codes,
Available online at 
\url{http://www.codetables.de/},
Accessed on 2018-01-22.

\bibitem{GOS}C. \"Guneri, B. \"Ozkaya and P. Sol\'e, 
Quasi-cyclic complementary dual codes,
{\sl Finite Fields Appl.}
{\bf 42}  (2016), 67--80.

\bibitem{Jin}L. Jin,
Construction of MDS codes with complementary duals,
{\sl IEEE Trans.\ Inform.\ Theory}
{\bf 63} (2017), 2843--2847.


\bibitem{Massey}J.L. Massey, 
Linear codes with complementary duals,
{\sl Discrete Math.}
{\bf 106/107} (1992), 337--342.

\bibitem{MTQ}S. Mesnager, C. Tang and Y. Qi, 
Complementary dual algebraic geometry codes, 
{\sl IEEE Trans.\ Inform.\ Theory}
{\bf 64} (2018), 2390--2397.

\bibitem{S}M. Sendrier,
Linear codes with complementary duals meet the Gilbert--Varshamov bound,
{\sl Discrete Math.}
{\bf 285} (2004), 345--347.

\bibitem{OEIS}N. Sloane and  S. Plouffe,
The Encyclopedia of Integer Sequences,
Academic Press, San Diego, CA, 1995
(Available online at \url{https://oeis.org}).

\end{thebibliography}
\end{document}